\newcommand{\R} {\ensuremath{\mathbb{R}}}
\newcommand{\C} {\ensuremath{\mathbb{C}}}
\newcommand{\Ok}{\mathcal{O}}
\newcommand{\Pj}{\mathcal{P}}
\newcommand{\OO}{\mathcal{O}}
\renewcommand{\o}[1]{\overline{#1}}
\newcommand{\bomega}{{\scalebox{1.3}{$\omega$}}}
\newcommand{\w}{{\wedge}}
\newcommand{\K}{{\mathcal K}}
\newcommand{\A}{{\mathcal A}}
\newcommand{\dq}{\bar\partial}
\newcommand{\dbar}{\dq}
\DeclareMathOperator{\Jac}{Jac}
\DeclareMathOperator{\Dom}{Dom}
\newcommand{\rr}{r}
\newtheorem {satz} {Satz} [section]
\newtheorem {lem} [satz] {Lemma}
\newtheorem {cor} [satz] {Corollary}
\newtheorem {prop} [satz] {Proposition}
\newtheorem {thm} [satz] {Theorem}
\theoremstyle{remark}
\newtheorem{preremark}[satz]{Remark}
\newtheorem{preex}[satz]{Example}
\newenvironment{remark}{\begin{preremark}}{\qed\end{preremark}}
\newenvironment{ex}{\begin{preex}}{\qed\end{preex}}
\numberwithin{equation}{section}
\DeclareMathOperator{\supp}{supp}
\renewcommand{\theta}{\vartheta}
\title{Estimates for the $\dbar$-equation on canonical surfaces}
\author{M. Andersson}
\author{R. L\"ark\"ang}
\author{J. Ruppenthal}
\author{H. Samuelsson Kalm}
\author{E. Wulcan}
\address{M.\ Andersson, H.\ Samuelsson Kalm, R.\ L\"ark\"ang, E.\
  Wulcan \\ Department of Mathematical Sciences\\Chalmers University of Technology and the University of Gothenburg\\S-412 96
Gothenburg\\Sweden}
\email{matsa@chalmers.se, hasam@chalmers.se, larkang@chalmers.se,
  wulcan@chalmers.se}
\address{J.\ Ruppenthal, Department of Mathematics, University of Wuppertal, Gau{\ss}str. 20, 42119 Wuppertal, Germany.}
\email{ruppenthal@uni-wuppertal.de}
\date{\today}
\subjclass[2000]{32A26, 32A27, 32B15, 32C30, 32W05}
\keywords{Cauchy-Riemann equations, canonical surface, Koppelman formulas, $L^p$-estimates,  singular complex spaces.}
\begin{document}

\begin{abstract}
We study the solvability in $L^p$ of the $\dbar$-equation in a neighborhood  of a canonical singularity
on a complex surface, a so-called du~Val singularity.  We get a quite complete picture
in case $p=2$ for two natural closed extensions $\dbar_s$ and $\dbar_w$ of $\dbar$.  For $\dbar_s$ we have solvability,
whereas for $\dbar_w$ there is solvability if and only if a certain boundary condition $(*)$ is fulfilled at
the singularity.  Our main tool is certain integral operators for solving $\dbar$
introduced by the first and fourth author, and we study mapping properties of these operators at the singularity.
\end{abstract}

\maketitle

\section{Introduction}

The classical Dolbeault-Grothendieck lemma states that locally in
$\C^n$ one can solve the $\dq$-equation $\dq u = \varphi$ if $\varphi$ is a $\dq$-closed $(0,\rr)$-form or current.
One can obtain a solution $u$ by a Koppelman formula; then $u$ is obtained through multiplication of $\varphi$ with a
smooth form followed by convolution with an integrable form, the so-called Bochner-Martinelli form.
Thus one even gains some regularity; in particular, one can solve
$\dbar$ in $C^\infty$, $L^p$, $C^\alpha$, Sobolev-spaces, etc,
see, e.g.,  \cite{Ra} or \cite{LiMi}.
 On singular varieties this is not true in general.  There are smooth $\dq$-closed
forms which have no local smooth $\dq$-potentials,
see,  e.g., \cite[Beispiel~1.3.4]{RuDipl} and \cite[Example~1]{AS}.

%
Solvability of the $\dq$-equation on singular varieties has been
studied in various articles, starting with among others \cite{HP,PS},
and in recent years solvability in $L^2$ has been of particular focus,
see, e.g.,  \cite{FOV, OV, RDuke}. There are known examples where the $\dq$-equation
is not locally solvable in $L^p$, for example when $p = 1$ or $p = 2$.
On homogeneous varieties, obstructions for solvability in $L^p$ have been described explicitly in \cite{RMatZ}.

\smallskip
In this paper we study solvability in $L^p$ of the $\dbar$-equation in a neighborhood of a canonical singularity
on a complex surface. On a surface a singularity is canonical if and only if it is a
rational double point. Such points
are well-studied and have been classified a long time ago as the so-called du~Val singularities,
see, e.g.,  the survey \cite{Du}. The possible singularities are of type $A_n$, $n\geq 1$, $D_n$, $n\geq 4$, $E_6$,
$E_7$ and $E_8$, and can be realized as isolated hypersurface
singularities in $\C^3$.

Throughout the introduction, we assume that $X$ is a surface with one isolated canonical singularity.
We will further assume that $X=\{f=0\}\subset \Omega'$, where $\Omega' \subset\subset \C^3$ is an open pseudoconvex set
and $f$ is holomorphic in a neighborhood of $\Omega'$ and that $df\neq 0$ on $\{ f = 0 \}$ except at the
singular point, which we assume is $0$.

Let $\dq_{sm}$ be the $\dq$-operator on smooth $(0,\rr)$-forms which have support not intersecting the singularity
at the origin.
We will consider two extensions of $\dbar_{sm}$  as a closed operator on $L^p(X)$.
One of them is the minimal closed extension, i.e., the
strong extension $\dq_s^{(p)}$ of $\dq_{sm}$, which is the graph
closure of $\dq_{sm}$ in $L^p_{0,\rr}(X) \times L^p_{0,\rr+1}(X)$. That is,
$\varphi \in \Dom \dq_s^{(p)} \subset L^p_{0,\rr}(X)$  if and only if there is a sequence of smooth
forms $\varphi_j \in  L^p_{0,\rr}(X)$ with
$\supp \varphi_j \cap \{0\} = \emptyset$
such that
\begin{equation*}
\varphi_j \rightarrow \varphi \ \ \ \mbox{ in } \ \ L^p_{0,\rr}(X),\quad
\dq \varphi_j \rightarrow \dq \varphi \ \ \ \mbox{ in } \ \ L^p_{0,\rr+1}(X).
\end{equation*}
The other extension is the maximal closed extension, i.e., the weak $\dq$-operator $\dq_w^{(p)}$,
so that  $\varphi \in \Dom \dq_w^{(p)}\subset  L^p_{0,\rr}(X)$ if and only if
$\dq \varphi \in L^p(X)$\footnote{This is what we take as definition of $\dq_w^{(p)}$ on $X$. However, to be precise, this definition only coincides
with the maximal closed extension of $\dq_{sm}$ for $p \geq 4/3$, which is the only case of interest to us.
In general, that $\varphi$ lies in the domain of the maximal closed extension of $\dq_{sm}$ means that
$\dq \varphi|_{X_{reg}} \in L^p(X_{reg})$. When $p \geq 4/3$, it then follows that $\dq \varphi \in L^p(X)$, see \cite[Satz~4.3.3]{RuThesis}.}.
 When it is clear from the context, we will drop the superscript $(p)$ in
 $\dq^{(p)}_s$ and $\dq^{(p)}_w$.

Let $\omega_X$ be the Poincar\'e residue of $dz_1\w dz_2\w dz_3/f$. It is an intrinsic $\dbar$-closed
meromorphic $(2,0)$-form on $X$ that is holomorphic outside of $0$.
We will see below (Proposition~\ref{prop:canonical} and Corollary~\ref{qXbound}) that there is a number $2 < q(X) \leq 4$ such that $\omega_X \in L^q(X)$
for $q<q(X)$.
Let $p(X)$ be the dual exponent of $q(X)$ and let
$$
\hat p(X)=\frac{4p(X)}{4-p(X)}.
$$
Notice that $4/3 \leq p(X)<2$ and $2 \leq \hat p(X)<4$.
For precise definitions of $L^p$-forms and $C^\alpha$-forms
on $X$, see Section~\ref{ssec:lp-forms}.

In our results, we have the following condition:

\noindent \textit{If $\varphi$ is a $(0,1)$-form in $\Dom \dbar_w^{(p)}$, where $p(X) < p \leq \infty$,
then it is said to satisfy the condition $(*)$ if
\begin{equation} \tag*{$(*)$}
\lim_{k \to \infty} \int_X \omega_X \wedge \dbar \chi_k \wedge \varphi = 0
\end{equation}
for some sequence of cut-off functions $\{\chi_k\}_k$, where each $\chi_k$ is $1$ in a neighborhood of $0$
and the support of $\chi_k$ approaches $\{0\}$ when $k \to \infty$.}

This condition is independent of the sequence of cut-off functions, see Section~\ref{ssec:star}, and is thus a
kind of boundary condition at $\{0\}$.
If $\varphi$ is $\dbar$-closed, as in the following theorem, by Stokes' theorem the condition $(*)$ means that
\begin{equation} \label{star-intro}
\int_X \omega_X\w \dbar\chi \wedge \varphi=0 
\end{equation}
for some smooth cutoff function $\chi$ that is $1$ in a neighborhood of $0$.

\begin{thm}\label{thmA}
Let $X$ be a surface as above with an isolated canonical singularity at $0$.

\smallskip
\noindent (i)   \    Assume that  $p(X)<p\le 4$.  If $\varphi$ is a $\dbar_s$-closed $(0,\rr)$-form in $L^p(X)$, $\rr=1,2$,
then there is $u$ in the domain of $\dbar_s^{(p)}$ in a neighborhood of
$0$ such that $\dbar_s u=\varphi$.

\smallskip
\noindent (ii)  \    Assume that $\hat p(X)<p\le\infty$.  If  $\varphi$ is a
$\dbar_w$-closed $(0,1)$-form in $L^p(X)$, then
there is a solution in $L^p$ to  $\dbar_w u=\varphi$ in a neighborhood of $0$.
If $p=\infty$, then
one can choose $u$ in $C^\alpha$ for  $\alpha< 4/p(X)-2$.
If $\varphi$ is a $(0,2)$-form the same holds for $p(X) < p \leq \infty$.

\smallskip
\noindent (iii)
Assume that $p(X) < p\le \hat p(X)$. If $\varphi$ is a $\dbar_w$-closed $(0,1)$-form
in $L^p(X)$, then
there is a solution in $L^p$ to $\dbar_w u=\varphi$ in a neighborhood of $0$ if and only if $\varphi$ satisfies the condition $(*)$.
\end{thm}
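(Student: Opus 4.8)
The plan is to reduce everything to the Koppelman integral operators $\mathcal{K}$ and $\mathcal{P}$ (the operators of Andersson--Samuelsson) whose mapping properties at the singularity are established earlier in the paper, and to identify the single obstruction to solvability as a linear functional that, for this range of $p$, is precisely the left-hand side of $(*)$. Concretely, for a $\dbar_w$-closed $(0,1)$-form $\varphi \in L^p(X)$ with $p(X) < p \le \hat p(X)$, the Koppelman formula in a small ball gives $\varphi = \dbar \mathcal{K}\varphi + \mathcal{P}\varphi$ on $X_{reg}$. By the mapping properties of $\mathcal{K}$ (the same ones invoked in parts (i) and (ii)), $\mathcal{K}\varphi$ lies in $L^p$; but in the present range of $p$ one cannot conclude a priori that $\dbar_w(\mathcal{K}\varphi) = \varphi - \mathcal{P}\varphi$ holds across $0$, nor that $\mathcal{P}\varphi$ — which is a $\dbar$-closed form representing an obstruction — actually vanishes. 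So the two things to control are: (a) whether $\mathcal{K}\varphi$ lies in $\Dom\dbar_w^{(p)}$ with the expected image, and (b) the residual term $\mathcal{P}\varphi$.

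First I would set up the \emph{if} direction. Assume $\varphi$ satisfies $(*)$. Using the description of $(*)$ via \eqref{star-intro} together with the pairing against $\omega_X$, one shows that the obstruction term $\mathcal{P}\varphi$ is forced to vanish (or is $\dbar_w$-exact with an $L^p$ potential): the space of obstructions is captured by integration against $\omega_X\w(\cdot)$, because $\omega_X$ generates the relevant local cohomology of the canonical surface near the du~Val point, and $(*)$ says exactly that $\varphi$ is orthogonal to this generator. Then $u := \mathcal{K}\varphi$ (possibly corrected by a smooth term coming from $\mathcal{P}$) solves $\dbar_w u = \varphi$; one checks $u \in L^p$ from the mapping properties already proved, and checks that $u \in \Dom\dbar_w^{(p)}$ by verifying $\dbar u = \varphi \in L^p(X)$, using the footnote's remark that for $p \ge 4/3$ it suffices to control $\dbar u|_{X_{reg}}$.

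Next the \emph{only if} direction. Suppose $\dbar_w u = \varphi$ for some $u \in L^p(X)$. I would test against $\omega_X$: for a cutoff $\chi$ that is $1$ near $0$,
\[
\int_X \omega_X \w \dbar\chi \w \varphi
= \int_X \omega_X \w \dbar\chi \w \dbar u
= -\int_X \omega_X \w \dbar(\dbar\chi \, u)
= \pm \int_X \dbar(\omega_X \w \dbar\chi) \w u,
\]
and since $\omega_X$ is $\dbar$-closed and $\dbar\chi$ is $\dbar$-closed, the integrand $\dbar(\omega_X \w \dbar\chi \w u)$ is exact; letting the support of $\dbar\chi$ shrink to $0$ and using that $u \in L^p$ with $\omega_X \in L^q(X)$ for $q < q(X)$ (so $\omega_X\w\dbar\chi\w u$ is integrable and its boundary terms vanish in the limit by a dominated-convergence / Stokes argument on $X_{reg}$ away from a shrinking neighborhood of $0$), the limit is $0$. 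Hence $(*)$ holds. The one point needing care here is the integrability bookkeeping: Hölder with the pair $(p,q)$ just barely works because $1/p + 1/q(X) \le 1$ in this range, and one must make sure the Stokes argument on the regular part does not pick up a contribution from the singularity — this is where the hypothesis $p \le \hat p(X)$ versus $p > \hat p(X)$ becomes visible, since for larger $p$ the boundary term can be made to vanish unconditionally (that is part (ii)) while here it genuinely records $(*)$.

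The main obstacle I expect is not either implication in isolation but the precise identification of the obstruction space with the functional $\varphi \mapsto \lim_k \int_X \omega_X\w\dbar\chi_k\w\varphi$ — i.e.\ showing this one-dimensional functional is the \emph{only} obstruction, so that its vanishing is sufficient. This requires knowing that the Koppelman operators $\mathcal{K}$, $\mathcal{P}$ on the du~Val surface produce, modulo $L^p$-solvable terms, exactly a one-dimensional cokernel spanned by a current dual to $\omega_X$; that in turn rests on the structure results for $\omega_X$ and the exponent $q(X)$ (Proposition~\ref{prop:canonical}, Corollary~\ref{qXbound}) and on the fine mapping estimates for $\mathcal{K}$ at the singularity that the paper establishes in order to prove parts (i) and (ii). So in practice part (iii) is the ``borderline'' case that is proved by combining the positive results of (i)--(ii) with a sharp analysis of the single leftover term, and the bulk of the work is checking that this leftover term is governed by $(*)$ and nothing more.
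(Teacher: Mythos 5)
Your ``only if'' direction is essentially the paper's argument and is fine: since $\chi\equiv 1$ near $0$, the form $\omega_X\w\dbar\chi$ is a \emph{smooth} $\dbar$-closed form with compact support contained in $X_{reg}$, so $\int_X\omega_X\w\dbar\chi\w\dbar_w u=0$ follows directly from the distributional definition of $\dbar_w$ (together with the independence of $(*)$ from the choice of cut-offs); no delicate limiting or Stokes bookkeeping at the singularity is actually required for this half.

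The ``if'' direction, however, has a genuine gap. First, the object you designate as the obstruction, $\Pj\varphi$, does not occur in this degree: with the weight of Example~\ref{viktex} the projection term is present only for $(0,0)$-forms (and for the compactly supported variant only for $(0,2)$-forms), so the Koppelman identity for a $\dbar$-closed $(0,1)$-form reads $\varphi=\dbar\K\varphi$ on $D\setminus\{0\}$ with no residual $\Pj\varphi$ to annihilate. The real difficulty --- which you flag as your point (a) but then do not address --- is whether this identity persists as a distributional identity \emph{across} $0$, i.e.\ whether $\dbar_w\K\varphi=\varphi$ rather than $\varphi$ plus a current supported at the origin; for $p\le\hat p(X)$ this is exactly where $(*)$ enters. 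The paper's proof (Theorem~\ref{thm:main3}) applies the Koppelman formula to $\mu_k\varphi$ for the cut-offs of Section~\ref{ssec:cut-off} and reduces the problem to showing $\K(\dbar\mu_k\w\varphi)\to 0$ weakly; after the splitting $\K=\K_1+\K_2$ of Proposition~\ref{prop:Kestimate}, a Fubini argument turns the $\K_2$ contribution into $\int\omega_X\w\dbar\mu_k\w\varphi\w\gamma(\zeta)$ with an inner integral $\gamma$ satisfying $|\gamma(\zeta)-\gamma(0)|\lesssim|\zeta|$, the term with $\gamma(0)$ being killed precisely by $(*)$ and the remainder by dominated convergence via \eqref{eq:cutoff2}. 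Your substitute for this step --- that the cokernel is one-dimensional, spanned by a functional dual to $\omega_X$, so that ``orthogonality to $\omega_X$'' suffices --- is asserted rather than proved, and it is not what $(*)$ says: $(*)$ is a boundary condition at $0$ involving a limit of integrals against $\dbar\chi_k$, not orthogonality to a global class, and the paper never establishes (nor needs) a one-dimensional cokernel. Without the cut-off/Fubini estimate, sufficiency of $(*)$ remains unproven in your write-up.
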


Notice that if $\dbar_w u=\varphi$, then \eqref{star-intro}  follows from Stokes' theorem since
$\omega_X\w\dbar\chi$ is a $\dbar$-closed smooth form with compact support.
Thus the condition  $(*)$ is necessary in the theorem.
It turns out that $(*)$ is automatically fulfilled when $\hat p(X) < p \leq \infty$, see the comment after the
proof of Theorem~\ref{thm:main3}.
In Section \ref{sec:examples} we study the condition $(*)$
explicitly for the various types of canonical singularities.
Theorem ~\ref{thm:An} asserts that in the case of a singularity of
type $A_n$, $n\geq1$,  any form $\varphi \in \Dom\dq_w \subset
L^2_{0,\rr}(X)$ satisfies  $(*)$.
For each of the other singularities, that is, of type $D_n$, $n\geq 4$, $E_6$, $E_7$ and $E_8$,
however, there is a  $(0,1)$-form $\varphi\in \ker \dq_w\subset L^2(X)$ such that
the equation $\dq_w u =\varphi$ has no solution in a neighborhood of $0$, see Theorem ~\ref{thm:examples2}.
It follows that for these $\varphi$ the condition  $(*)$ is not satisfied.

\medskip

To the best of our knowledge, the only known cases of Theorem~\ref{thmA} for general
surfaces with canonical singularities are the following:
Part (i) for $p=2$ was proven in \cite[Corollary ~1.3]{RSerre}.
Part (ii) for $p=2$ and $(0,2)$-forms was proven in \cite[Theorem~4.3]{OR}, which builds on
the vanishing result from \cite{S}.
Some weaker versions of part (ii) are known as well. For $\varphi$ with compact support, it was proven
that one can find solutions in $L^p$ (for arbitrary $p$) or with $C^\alpha$-estimates
in \cite{RZ2}. Moreover, for continuous $(0,1)$-forms $\varphi$ with compact support, $C^\alpha$-estimates
for solutions were obtained in \cite{AZ1,AZ2}.

Various results are known for the $A_1$-singularity, as is detailed in the introduction of \cite{LR}.
That there are obstructions to solving $\dbar_w$ in $L^2$ on the $D_4$-singularity was proven in \cite[Proposition 4.13]{P}.

\medskip

As mentioned above, a large part of the study of the $\dq$-equation on singular varieties
has been restricted to $L^2$-spaces.  Integral formulas open up
for new results about solvability  in $L^p$-spaces for $p \neq 2$, as well as
other norms.  For the proof of Theorem~\ref{thmA} our main tool is an
integral operator introduced in \cite{AS1,AS}.
Keeping the notation above, let $\Omega\subset\subset \Omega'$ be an open
set containing $0$ and let $D=X\cap\Omega$.
There is an operator $\K\colon C^\infty_{0,\rr}(X)\to C^\infty_{0,\rr-1}(D\setminus\{0\})$
$\rr=1,2$, such that
\begin{equation}\label{bas}
\varphi=\dbar \K\varphi+\K\dbar\varphi
\end{equation}
on $D\setminus\{0\}$.
The operator is given by an intrinsic integral kernel $K(\zeta,z)$ on
$X\times D\setminus\{0\}$ that contains the Poincar{\'e} residue $\omega_X$ as a factor in the first variable.
In \cite{AS} it was proved that  $\K$ and \eqref{bas} can be extended to certain fine sheaves
$\A_X^\rr$ of currents defined across $0$ and coinciding with $C^\infty_{0,\rr}$ outside $0$,  so that
$\dbar u=\varphi$ is solvable in $\A_X$ as soon as $\dbar\varphi=0$.

In order to prove Theorem~\ref{thmA}  we have to extend $\K$ and
\eqref{bas} to $L^p$.  To this end we first consider mapping properties of $\K$.

\begin{thm}\label{thm:main1}
 The integral operator $\K$
extends to  compact operators
\begin{equation}\label{apa1}
    \K \colon L^p_{0,\rr}(X) \to L^p_{0,\rr-1}(D), \ \
p(X) < p <\infty
\end{equation}
and
\begin{equation}\label{apa11}
    \K \colon L^\infty_{0,\rr}(X) \to C^\alpha_{0,\rr-1}(D), \ \ 0\le \alpha< 4/p(X)-2.
\end{equation}
 \end{thm}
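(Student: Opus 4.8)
The statement to prove is Theorem~\ref{thm:main1}: that $\K$, given by an intrinsic kernel $K(\zeta,z)$ containing $\omega_X$ as a factor in the $\zeta$-variable, extends to a \emph{compact} operator $L^p_{0,\rr}(X)\to L^p_{0,\rr-1}(D)$ for $p(X)<p<\infty$, and to $L^\infty\to C^\alpha$ for $0\le\alpha<4/p(X)-2$. The natural strategy is to work on a resolution of singularities $\pi\colon\wt X\to X$ (or on $X_{reg}$ with good coordinates near $0$) and to estimate the pullback kernel directly. Since $\omega_X\in L^q(X)$ for every $q<q(X)$, after pullback $\pi^*\omega_X$ acquires a factor whose local behavior is controlled by a power of the distance to the exceptional divisor; together with the Bochner–Martinelli-type factor in $K(\zeta,z)$ (which has the usual $|\zeta-z|^{-(2n-1)}$ singularity along the diagonal, here $n=2$ so $|\zeta-z|^{-3}$) one gets a kernel of the schematic form
\begin{equation*}
|K(\zeta,z)|\ \lesssim\ \frac{\mathrm{dist}(\zeta,0)^{-\beta}}{|\zeta-z|^{3}}\,\cdot(\text{bounded})
\end{equation*}
on $\wt X$, where $\beta$ is dictated by the $L^q$-integrability exponent $q(X)$, i.e.\ $\beta < 4 - 4/q(X) = 4/p(X)$ (using $1/p+1/q=1$ and $\dim_\R X_{reg}=4$).

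First I would set up the resolution and record the precise local form of $K$ after pullback, citing the description from \cite{AS1,AS}; the key quantitative input is the explicit power of $\mathrm{dist}(\cdot,0)$ coming from $\omega_X$, which is exactly what Proposition~\ref{prop:canonical}/Corollary~\ref{qXbound} pin down via $q(X)$. Second, I would prove the $L^p\to L^p$ boundedness by a Schur test: one needs to bound $\sup_z\int_X |K(\zeta,z)|\,d\lambda(\zeta)$ with a suitable weight, and symmetrically in $z$; the diagonal singularity $|\zeta-z|^{-3}$ is integrable against $d\lambda$ in real dimension $4$, so the only issue is the singularity of $\omega_X$ at $0$, which is handled precisely because $\omega_X\in L^q$ for $q<q(X)$ combined with Hölder/Young. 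Third, compactness: decompose $\K=\K_\epsilon+(\K-\K_\epsilon)$ where $\K_\epsilon$ has kernel cut off away from both the diagonal and from $\{0\}$ (hence is an integral operator with, say, bounded kernel on a set of finite measure, thus Hilbert–Schmidt/compact on $L^2$ and by interpolation or direct estimate compact on $L^p$), while $\|\K-\K_\epsilon\|_{L^p\to L^p}\to 0$ as $\epsilon\to0$ by the same Schur-type estimates applied to the small kernel; a norm limit of compact operators is compact. Fourth, the $L^\infty\to C^\alpha$ statement: for $\varphi\in L^\infty$, $\K\varphi(z)=\int K(\zeta,z)\varphi(\zeta)$, and one estimates $|\K\varphi(z)-\K\varphi(z')|$ by splitting the integration domain into a ball of radius $\sim|z-z'|$ around $z$ (where one uses size of $K$) and its complement (where one uses the gradient estimate $|\nabla_z K(\zeta,z)|\lesssim \mathrm{dist}(\zeta,0)^{-\beta}|\zeta-z|^{-4}$); balancing the two pieces against the power $|z-z'|^\alpha$ forces exactly $\alpha<4/p(X)-2$, the ``$4$'' being the real dimension and the ``$2$'' coming from the degree-three diagonal singularity integrated in dimension four minus one from the radius of the excised ball.

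\textbf{The main obstacle.} The genuinely delicate point is not the diagonal singularity—that is the classical Bochner–Martinelli estimate—but the interaction at $0$ of the singularity of $\omega_X$ with the geometry of the resolution, i.e.\ obtaining the \emph{sharp} exponent $\beta=4/p(X)$ (equivalently, showing one may take $q$ arbitrarily close to $q(X)$ in all estimates). This requires a careful local analysis of $\pi^*K$ near each component of the exceptional divisor of the du~Val resolution, making sure that the ``$1/f$'' in the Poincaré residue and the Jacobian of $\pi$ combine to give precisely the integrability threshold recorded in Corollary~\ref{qXbound}, with no loss; any slack here would fail to reach the endpoint $p$ down to $p(X)$. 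A secondary technical nuisance is verifying that the cutoff approximations $\K_\epsilon$ really do converge to $\K$ in operator norm (not merely strongly), which again reduces to the Schur estimate for the truncated kernel but must be done uniformly. Once these estimates are in hand, compactness and the Hölder bound follow by the standard arguments sketched above.
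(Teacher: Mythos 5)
There is a genuine gap, and it lies exactly where you locate the ``main obstacle'', but the missing ingredient is not a sharper exponent $\beta$ on a resolution: it is a structural decomposition of the kernel that your schematic bound throws away. Your starting estimate
$|K(\zeta,z)|\lesssim \operatorname{dist}(\zeta,0)^{-\beta}\,|\zeta-z|^{-3}$
is (essentially) a valid upper bound, but it is too weak to prove the theorem for \emph{any} $p$ near $p(X)$. Indeed, following your own Schur/Young scheme with this bound: $\varphi\mapsto\varphi\,\omega_X$ maps $L^p\to L^a$ with $1/a=1/p+1/q$, and to come back to $L^p$ by Young you need $|\zeta-z|^{-3}\in L^{s}$ with $1/s=1-1/q$, i.e.\ $s=q/(q-1)$. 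Since $q<q(X)\le 4$, one always has $s>4/3$, whereas $|\zeta-z|^{-3}$ lies in $L^{s}$ on a $4$-real-dimensional variety only for $s<4/3$. So the crude product of the two worst singularities never closes, not even for the $A_1$-singularity. The point the paper exploits (Proposition~\ref{prop:Kestimate}) is that these two singularities never occur together: writing the Hefer form as $h=df/2\pi i+O(|\eta|)$ and contracting with $\vartheta$, the kernel splits as $K=k_1+k_2$, where $k_1$ carries the full Bochner--Martinelli singularity $|\eta|^{-3}$ but with the \emph{bounded} coefficient $f'_j\overline{f'_k}/|\partial f|^2$ in place of $\omega_X$, while $k_2$ contains $\omega_X$ but only an $|\eta|^{-2}$ diagonal singularity (the extra factor $\eta_j$ coming from the $O(|\eta|)$ term). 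Then $\K_1$ is bounded on $L^p$ for all $p$ by plain Young, and $\K_2$ closes by exactly the H\"older--Young chain you describe, now with $|\zeta-z|^{-2}\in L^s$ for $s<2$, which is compatible with $s=q/(q-1)$ for $q>2$. This cancellation is the one idea without which the proof does not go through.

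The same defect propagates to your H\"older argument: the exponent $4/p(X)-2$ does not arise from balancing a gradient bound of a $|\zeta-z|^{-3}$ kernel against $\operatorname{dist}(\zeta,0)^{-\beta}$, but from applying H\"older's inequality to the $k_2$ piece, $\bigl(\int_{B_{|z-w|}}|\zeta-z|^{-2p}\bigr)^{1/p}\sim|z-w|^{4/p-2}$ with $p$ dual to $q<q(X)$, while the $k_1$ piece is $C^\alpha$ for every $\alpha<1$ by the classical Bochner--Martinelli argument. Two smaller remarks: the paper never pulls the kernel back to a resolution for these estimates (the resolution is used only to establish $\omega_X\in L^q$ in Proposition~\ref{prop:canonical}); all integrals are done directly on $X$ using the fact that $\operatorname{vol}(X\cap B_r)\sim r^{4}$ (the appendix lemmas). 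And your compactness argument by norm-approximation with truncated kernels is fine and matches the paper for $\K_1$ and for the second factor of $\K_2$; for the $C^\alpha$ case the paper instead uses equicontinuity and Arzel\`a--Ascoli, which your sketch omits but which is routine once the uniform H\"older estimate is in place.
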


Since the sheaves $\A_X^\rr$ are quite implicitly defined and its sections must have singularities
at $X_{sing}$ in general, it is interesting to note the following consequence of \eqref{apa11}.

\begin{cor} \label{cor:asheaves} For $X$ as above we have that
$$
\mathcal{A}_X^\rr \subset C^\alpha_{X,0,\rr}, \ \ \ \quad 0 \leq \alpha < 4/p(X)-2.
$$
\end{cor}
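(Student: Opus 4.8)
The plan is to exhibit any local section $u$ of $\A_X^\rr$ as (a piece of) $\K$ applied to an $L^\infty$-form, and then invoke the Hölder mapping property \eqref{apa11}. Fix a point of $X_{sing}$; after a translation we may assume it is $0$, and work in a neighborhood $D=X\cap\Omega$ as in the statement of Theorem~\ref{thm:main1}. Let $u\in\A_X^\rr(D)$. Since $\A_X$ is a resolution in which $\dbar$ is solvable, $\dbar u\in\A_X^{\rr+1}(D)$; set $\varphi:=\dbar u$. By construction $\A_X^{\rr+1}$ agrees with $C^\infty_{0,\rr+1}$ off $0$, and the whole point of $\A_X$ is that its sections are currents that are ``mild'' at $0$ — in particular $\varphi\in L^\infty_{0,\rr+1}$ near $0$ (indeed smooth forms are the model case, and more generally the defining growth conditions for $\A_X$ force local boundedness). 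Now shrink $D$ slightly and apply the Koppelman-type identity \eqref{bas}, which by \cite{AS} extends to sections of $\A_X$: on $D\setminus\{0\}$ we have
\begin{equation*}
u=\dbar\K u+\K\dbar u=\dbar\K u+\K\varphi.
\end{equation*}

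The first step is therefore to understand $\K u$. The operator $\K$ raises no regularity issues here in the sense that $u\in L^\infty$ as well (again by the mildness of $\A_X$-sections near $0$, and smoothness away from $0$), so $\K u\in C^\alpha_{0,\rr-1}(D)$ for $0\le\alpha<4/p(X)-2$ by \eqref{apa11}. Consequently $\dbar\K u = u-\K\varphi$, and since $\K\varphi\in C^\alpha_{0,\rr}(D)$ by \eqref{apa11} applied to $\varphi\in L^\infty$, we conclude that $u$ equals the sum of two $C^\alpha$-forms plus possibly a correction — wait: rather, reading the identity the other way, $u = \dbar\K u + \K\varphi$ already expresses $u$ as $\dbar$ of a $C^\alpha$-form plus a $C^\alpha$-form. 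That does not immediately give $u\in C^\alpha$. The clean route is instead: since $\dbar u=\varphi\in L^\infty$, apply \eqref{bas} to $u$ directly — $u = \dbar\K u + \K\varphi$ — and observe that this exhibits $u$ minus the honestly $C^\alpha$ term $\K\varphi$ as $\dbar$ of a $C^\alpha$ form, which is not yet what we want. So the actual argument must instead run through the solution operator: one knows $\dbar u=\varphi$, one solves $\dbar w=\varphi$ with $w=\K\varphi\in C^\alpha$, and then $u-w$ is $\dbar$-closed in $\A_X^\rr$; iterate or use that in degree $\rr$ a $\dbar$-closed $\A_X$-section of the appropriate degree is itself $\K$ of something or is already regular. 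I would organize this as a downward induction on the degree $\rr$, the base case being the top degree where $\dbar u=0$ automatically and $u=\dbar\K u$, so $u\in\dbar(C^\alpha)$; combined with $u\in L^\infty$ (hence $L^p$ for all $p$) and interior elliptic regularity for $\dbar$ on $X_{reg}$, plus the Hölder bound near $0$ coming from \eqref{apa11}, one gets $u\in C^\alpha_{X,0,\rr}$.

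The main obstacle is the transition from ``$u=\dbar(\text{something }C^\alpha)+(\text{something }C^\alpha)$'' to ``$u\in C^\alpha$'': the identity \eqref{bas} alone does not give this, since $\dbar$ of a Hölder form need not be Hölder. The resolution is to use that $\A_X^\rr$ is a \emph{fine} resolution, so that solvability of $\dbar$ is available within $\A_X$, together with the fact — which must be extracted from the construction in \cite{AS} — that the component of $\K$ built from the kernel $K(\zeta,z)$ lands, when fed an $L^\infty$ input, in $C^\alpha$ by exactly the estimate proving \eqref{apa11}. Thus the real content is bookkeeping: verify that every section of $\A_X^\rr$ near $0$ is $L^\infty$ (this is where one uses the explicit description of $\A_X$ recalled in the body of the paper), then run the Koppelman homotopy so that $u$ is written entirely in terms of $\K$ applied to $L^\infty$ data — concretely $u=\K\varphi+\dbar\K u$ with $u,\varphi\in L^\infty$ — and finally note $\dbar\K u = u-\K\varphi$ so that the \emph{only} potentially non-$C^\alpha$ part of $u$ is $\dbar\K u$, which equals $u-\K\varphi$, forcing $u\in C^\alpha$ near $0$ once one knows a priori that $u$ is, say, continuous there. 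I would get that last a priori regularity from the $p=\infty$ (or large finite $p$) case of Theorem~\ref{thmA}(ii) applied on $D\setminus\{0\}$ together with removable-singularity considerations, and then the inclusion $\A_X^\rr\subset C^\alpha_{X,0,\rr}$ follows on all of $D$.
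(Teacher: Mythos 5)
There is a genuine gap, and it is the central one. Your argument takes as input that sections of $\A_X^\rr$ (and their $\dbar$-images) are locally bounded near the singularity, asserting that ``the defining growth conditions for $\A_X$ force local boundedness.'' No such growth conditions exist: the sheaves $\A_X^\rr$ are defined in \cite{AS} without any a~priori size restriction, and the paper explicitly remarks just before the corollary that their sections must in general have singularities at $X_{sing}$. Boundedness (indeed H\"older continuity) of $\A_X$-sections on a canonical surface is precisely the content of the corollary, so assuming $u,\dbar u\in L^\infty$ is circular. Moreover, even granting continuity of $u$ a~priori, the homotopy identity $u=\dbar\K u+\K\dbar u$ cannot yield $u\in C^\alpha$: as you yourself observe, $\dbar$ of a H\"older form need not be H\"older, and writing $\dbar\K u=u-\K\varphi$ just moves the unknown regularity of $u$ from one side of the equation to the other. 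The proposed fixes (downward induction on degree, elliptic regularity on $X_{reg}$, removable singularities) never close this loop.

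The idea you are missing is the explicit structural description of the stalk of $\A_X$ at the singular point, which is built into its construction in \cite{AS}: every germ is a finite sum of currents of the form $\xi_{\nu+1}\wedge\K_\nu(\cdots\xi_2\wedge\K_1\xi_1)$, where the $\xi_i$ are smooth and each $\K_i$ is an integral operator of the type covered by Theorem~\ref{thm:main1}, acting between shrinking neighborhoods $D_i$. With this in hand the corollary is immediate by iterating \eqref{apa11}: $\K_1\xi_1\in C^\alpha\subset L^\infty$, wedging with the smooth $\xi_2$ stays in $L^\infty$, so $\K_2$ of it is again $C^\alpha$, and so on; no homotopy formula and no a~priori regularity of $u$ are needed. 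Your proposal does not engage with this description at all, so the argument as written does not establish the inclusion.
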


In order to obtain  solutions to the $\dbar_s$-equation in $L^p$ we extend \eqref{bas} by approximating
$\varphi$ by smooth forms with support away from $0$. If $\varphi$ is in the domain of
$\dbar_s^{(p)}$, it follows that \eqref{bas} holds, so if  $\dbar\varphi=0$ we
get  the solution $u=\K\varphi$ to $\dbar u=\varphi$. The problem is to see that $u$
is in the domain of $\dbar_s^{(p)}$.  
This is ``harder'' for large $p$ and our upper bound is $4$.

\begin{thm} \label{thm:main2}
Assume that $p(X)<p\le 4$.   If $\varphi \in \Dom\dq_s^{(p)} \subset L^p_{0,\rr}(X)$,
then $\mathcal{K}\varphi \in \Dom\dq_s^{(p)}$ and
\begin{equation}\label{koppla}
      \varphi(z) = \dq_s \mathcal{K} \varphi(z) + \mathcal{K} \dq_s \varphi(z), \quad
\rr=1,2.
\end{equation}
\end{thm}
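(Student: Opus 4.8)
The plan is to first establish the Koppelman identity \eqref{koppla} by density, and then to upgrade the information ``$\K\varphi$ lies in the domain of the weak extension $\dbar_w^{(p)}$'' to ``$\K\varphi$ lies in the domain of the strong extension $\dbar_s^{(p)}$'' by an explicit cut‑off argument near $0$.

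\textbf{Step 1 (Koppelman by approximation).} Let $\varphi\in\Dom\dbar_s^{(p)}\subset L^p_{0,\rr}(X)$ and choose smooth $\varphi_j$ with $\supp\varphi_j\cap\{0\}=\emptyset$ such that $\varphi_j\to\varphi$ in $L^p_{0,\rr}(X)$ and $\dbar\varphi_j\to\dbar_s\varphi$ in $L^p_{0,\rr+1}(X)$. For each $j$ the smooth Koppelman formula \eqref{bas} reads $\varphi_j=\dbar\K\varphi_j+\K\dbar\varphi_j$ on $D\setminus\{0\}$. Since $\K$ is bounded on the relevant $L^p$‑spaces by Theorem~\ref{thm:main1}, we get $\K\varphi_j\to\K\varphi$ in $L^p_{0,\rr-1}(D)$ and $\K\dbar\varphi_j\to\K\dbar_s\varphi$ in $L^p_{0,\rr}(D)$, hence $\dbar\K\varphi_j=\varphi_j-\K\dbar\varphi_j$ converges in $L^p_{0,\rr}(D)$. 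Passing to the limit as currents on $X_{reg}$ gives $\dbar(\K\varphi)=\varphi-\K\dbar_s\varphi$ there; as the right‑hand side lies in $L^p(D)$ and $p>p(X)\ge 4/3$, this shows $\K\varphi\in\Dom\dbar_w^{(p)}$ with $\dbar_w\K\varphi=\varphi-\K\dbar_s\varphi$. (For $\rr=2$ the term $\K\dbar_s\varphi$ vanishes by degree.)

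\textbf{Step 2 (reduction to a cut‑off estimate).} To place $\K\varphi$ in $\Dom\dbar_s^{(p)}$ one must produce smooth forms supported off $0$ converging to $\K\varphi$ in graph norm. Pick smooth cut‑offs $\chi_\epsilon$ on $X$ that vanish in a neighborhood of $0$, equal $1$ outside a shrinking neighborhood of $0$, and tend to $1$ pointwise on $X_{reg}$, with $|\dbar\chi_\epsilon|\lesssim 1/\epsilon$ and $\supp\dbar\chi_\epsilon\subset\{|z|\sim\epsilon\}\cap X$. On $X_{reg}$ the Leibniz rule gives $\dbar(\chi_\epsilon\K\varphi)=\chi_\epsilon\,\dbar_w\K\varphi+\dbar\chi_\epsilon\w\K\varphi$. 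Since $\K\varphi,\dbar_w\K\varphi\in L^p$, dominated convergence yields $\chi_\epsilon\K\varphi\to\K\varphi$ and $\chi_\epsilon\,\dbar_w\K\varphi\to\dbar_w\K\varphi$ in $L^p$. Each $\chi_\epsilon\K\varphi$ is supported away from $0$, so interior regularization on the complex manifold $X_{reg}$ replaces it by a smooth form supported off $0$ with arbitrarily small graph‑norm error; a diagonal argument then finishes, \emph{provided}
\begin{equation}\label{eq:crux}
\|\dbar\chi_\epsilon\w\K\varphi\|_{L^p(D)}\longrightarrow 0\qquad(\epsilon\to 0).
\end{equation}

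\textbf{Step 3 (the key estimate, and the role of $p\le 4$).} The estimate \eqref{eq:crux} is the main obstacle. The point is that $\K$ gains integrability near $0$: its kernel $K(\zeta,z)$ is of Bochner--Martinelli type in the second variable, tempered only by the factor $\omega_X$ in the first variable, which lies in $L^q$ for $q<q(X)$. Concretely I would extract from the proof of Theorem~\ref{thm:main1} a bound $\K\colon L^p_{0,\rr}(X)\to L^{p_1}_{0,\rr-1}(D)$ with some $p_1>p$ (and $p_1$ arbitrarily large as $p\uparrow 4$). Since $\{|z|\sim\epsilon\}\cap X$ has $dV_X$‑measure $\lesssim\epsilon^4$, Hölder's inequality gives
\[
\|\dbar\chi_\epsilon\w\K\varphi\|_{L^p(D)}^p\ \lesssim\ \epsilon^{-p}\,\|\K\varphi\|_{L^{p_1}(D)}^p\,\epsilon^{4(1-p/p_1)}\ \lesssim\ \|\varphi\|_{L^p(X)}^p\,\epsilon^{4-p-4p/p_1},
\]
and $4-p-4p/p_1>0$ provided $p_1>4p/(4-p)$ when $p<4$, respectively for every finite $p_1$ when $p=4$; this is exactly why the range stops at $4$. (An alternative is a Hardy‑type inequality: deduce $\K\varphi/|z|\in L^p(D)$ from $\dbar_w\K\varphi\in L^p$ and the structure of $\K\varphi$, and use a logarithmic cut‑off over the scales $\epsilon\le|z|\le 1$; again the borderline is $p=4$.) With \eqref{eq:crux} in hand, $\K\varphi\in\Dom\dbar_s^{(p)}$, and since $\dbar_s$ and $\dbar_w$ agree on $\Dom\dbar_s^{(p)}$, Step~1 upgrades to the identity \eqref{koppla} with $\dbar_s$.
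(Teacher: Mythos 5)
Your Step 1 matches the paper's argument and is fine. The gap is in Step 3, on which your whole passage from $\Dom\dbar_w^{(p)}$ to $\Dom\dbar_s^{(p)}$ rests. The mapping property you want to ``extract from the proof of Theorem~\ref{thm:main1}'', namely $\K\colon L^p\to L^{p_1}$ with $p_1>4p/(4-p)$, is not available. For the piece $\K_1$ with kernel $O(|\zeta-z|^{-3})$ on a variety of real dimension $4$, fractional integration gives at best the endpoint $p_1=4p/(4-p)$, which makes your exponent $4-p-4p/p_1$ exactly $0$: the terms $\dbar\chi_\epsilon\w\K\varphi$ stay bounded in $L^p$ but do not tend to $0$. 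For the piece $\K_2$ the situation is strictly worse: its kernel carries the factor $\omega_X\in L^q$ only for $q<q(X)\le 4$, and the Hölder/Young scheme used in the paper gives $1/p_1\ge 1/p+1/q-1/2>1/p-1/4$ whenever $q(X)<4$ (e.g.\ all $D_n$ and $E_n$ cases), so $p_1$ falls strictly short even of the endpoint $4p/(4-p)$. Thus \eqref{eq:crux} is not established, and no choice of cut-offs (sharp annular or logarithmic) rescues it from the $L^p$ information on $\K\varphi$ alone.

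The paper's proof circumvents this by never cutting off $\K\varphi$ itself. By Lemma~\ref{lem:cut-off2} (valid precisely for $p\le 2n=4$, which is the real source of the upper bound $4$), membership in $\Dom\dbar_s^{(p)}$ is equivalent to approximability in graph norm by \emph{bounded} forms in $\Dom\dbar_w^{(p)}$. One therefore chooses the approximating sequence $\varphi_j$ bounded (and supported off $0$); then $\K\varphi_j$ is bounded by the $L^\infty\to C^\alpha$ part of Theorem~\ref{thm:main1}, while $\dbar\K\varphi_j=\varphi_j-\K\dbar\varphi_j\to\varphi-\K\dbar_s\varphi$ in $L^p$, and Lemma~\ref{lem:cut-off2} applied to the sequence $\{\K\varphi_j\}_j$ yields $\K\varphi\in\Dom\dbar_s^{(p)}$. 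The boundedness of the approximants is exactly the extra input that makes the cut-off argument (hidden inside Lemma~\ref{lem:cut-off2}) close for $p\le 4$; your argument discards it and then cannot recover.
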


In case of  $\dbar_w$  we have basically the opposite problem. Since a~priori we have no
approximation by smooth forms with support away from the origin it is "harder"
to obtain the extension of \eqref{bas} for small $p$, while it then directly follows from Theorem~\ref{thm:main1} that
the solution is in the domain of $\dbar_w$.

\begin{thm}\label{thm:main3}
Assume that $p(X)< p\le \infty$.
If  $\varphi \in \Dom\dq_w^{(p)} \subset L^p_{0,2}(X)$,  then $\mathcal{K}\varphi \in \Dom\dq_w^{(p)}$
and
\begin{equation}\label{hoppla}
      \varphi(z) = \dq_w \mathcal{K} \varphi(z) + \mathcal{K} \dq_w \varphi(z).
  \end{equation}
The same holds for $\varphi \in \Dom\dq_w^{(p)} \subset L^p_{0,1}(X)$ if
$\hat p(X) < p \leq \infty$.
If $p(X)< p\le \hat p(X)$, and
in addition $\varphi$ satisfies the condition $(*)$, then the same
conclusion holds.
\end{thm}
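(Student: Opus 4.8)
The plan is to establish both claims --- for $(0,2)$-forms and for $(0,1)$-forms --- by a single scheme, based on the observation that the only obstruction is one explicit term, the image under $\K$ of $\dbar\chi_k\w\varphi$, where $\{\chi_k\}$ is a sequence of cut-off functions that are $1$ near $0$ with $\supp\chi_k\to\{0\}$. For a $(0,2)$-form this term vanishes identically, since $\dbar\chi_k\w\varphi$ would be a $(0,3)$-form on the surface $X$; this is why no condition is needed in that case. As the introduction indicates, the two ``halves'' of the statement have opposite difficulties: showing that the Koppelman identity \eqref{hoppla} already holds on $D\setminus\{0\}$ is the delicate part, because $\varphi$ admits no graph-norm approximation by smooth forms supported off $0$, whereas promoting it across $0$ is easy once $\K\varphi\in L^p(D)$, which is Theorem~\ref{thm:main1}.

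\emph{The identity on $D\setminus\{0\}$.} Set $\varphi_k=(1-\chi_k)\varphi$. Each $\varphi_k$ is supported away from $0$ and, together with $\dbar\varphi_k=(1-\chi_k)\dbar\varphi-\dbar\chi_k\w\varphi$, lies in $L^p$; for such forms $\varphi_k=\dbar\K\varphi_k+\K\dbar\varphi_k$ on $D\setminus\{0\}$ follows from \eqref{bas} by a standard mollification on the complex manifold $X\setminus\{0\}$ with uniformly controlled supports, passing to the limit by the boundedness in Theorem~\ref{thm:main1}. Since $\varphi_k\to\varphi$ and $(1-\chi_k)\dbar\varphi\to\dbar\varphi$ in $L^p$, Theorem~\ref{thm:main1} gives $\K\varphi_k\to\K\varphi$ and $\K((1-\chi_k)\dbar\varphi)\to\K\dbar\varphi$ in $L^p(D)$, hence $\dbar\K\varphi_k\to\dbar\K\varphi$ as currents, and we get on $D\setminus\{0\}$
\[
\varphi=\dbar\K\varphi+\K\dbar\varphi+\lim_{k\to\infty}\K(\dbar\chi_k\w\varphi),
\]
once the last limit is shown to exist and vanish. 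For $(0,1)$-forms I would fix $z\neq 0$ and use that $K(\zeta,z)=\omega_X(\zeta)\w g(\zeta,z)$ with $g(\,\cdot\,,z)$ smooth near $\zeta=0$; only the part $g_0(\zeta,z)$ of $g$ free of $d\zeta,d\bar\zeta$ survives the wedge with the $(2,2)$-form $\omega_X\w\dbar\chi_k\w\varphi$, so, writing $g_0(\zeta,z)=g_0(0,z)+O(|\zeta|)$,
\[
\K(\dbar\chi_k\w\varphi)(z)=g_0(0,z)\int_X\omega_X\w\dbar\chi_k\w\varphi+\int_X\big(g_0(\zeta,z)-g_0(0,z)\big)\,\omega_X\w\dbar\chi_k\w\varphi .
\]
The first term tends to $0$ by $(*)$. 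For the second, $|g_0(\zeta,z)-g_0(0,z)|\le C_z|\zeta|$ on $\supp\dbar\chi_k$, and H\"older's inequality with $\omega_X\in L^q(X)$ ($q<q(X)$), $\|\dbar\chi_k\|_{L^r(X)}\lesssim\epsilon_k^{4/r-1}$ (using $\mathrm{vol}(X\cap B_\epsilon)\lesssim\epsilon^4$) and $\varphi\in L^p$, with $1/q+1/r+1/p=1$ --- solvable with $r<\infty$ precisely because $p>p(X)$ --- bounds it by $C_z\,\epsilon_k^{4/r}\,\|\omega_X\|_{L^q(X\cap B_{\epsilon_k})}\,\|\varphi\|_{L^p(X\cap B_{\epsilon_k})}\to 0$. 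These estimates are uniform for $z$ in compact subsets of $D\setminus\{0\}$, so $\K(\dbar\chi_k\w\varphi)\to 0$ in $L^1_{loc}(D\setminus\{0\})$, and \eqref{hoppla} holds there.

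\emph{Across $0$.} By Theorem~\ref{thm:main1}, $\K\varphi$ and $\varphi-\K\dbar\varphi$ both lie in $L^p(D)$; the current $\dbar(\K\varphi)-(\varphi-\K\dbar\varphi)$ on $D$ is then supported at $\{0\}$, and I would show it is zero by testing against a smooth compactly supported $\xi$. Writing $\dbar\xi=\dbar((1-\chi_k)\xi)+\chi_k\dbar\xi+\dbar\chi_k\w\xi$, the integral $\int_D\K\varphi\w\chi_k\dbar\xi\to0$ by dominated convergence, the integral $\int_D\K\varphi\w\dbar\chi_k\w\xi$ is at most $\|\K\varphi\|_{L^p(X\cap B_{\epsilon_k})}\|\dbar\chi_k\w\xi\|_{L^{p'}}\lesssim\|\K\varphi\|_{L^p(X\cap B_{\epsilon_k})}\,\epsilon_k^{4/p'-1}\to0$ (here $p'<4$, i.e. $p>4/3$, which holds since $p>p(X)\ge4/3$), and $\int_D\K\varphi\w\dbar((1-\chi_k)\xi)=\pm\langle\varphi-\K\dbar\varphi,(1-\chi_k)\xi\rangle\to\pm\langle\varphi-\K\dbar\varphi,\xi\rangle$ by the identity on $D\setminus\{0\}$ and dominated convergence. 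Hence $\dbar(\K\varphi)=\varphi-\K\dbar\varphi\in L^p(D)$, so $\K\varphi\in\Dom\dbar_w^{(p)}$ and \eqref{hoppla} holds. For $p=\infty$ one carries out these limits in some finite $L^{p_0}$ with $p_0>p(X)$, using $L^\infty\subset L^{p_0}_{loc}$, and observes afterward that $\K\varphi,\K\dbar\varphi\in C^\alpha\subset L^\infty$ by Theorem~\ref{thm:main1}.

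Finally, when $\hat p(X)<p\le\infty$ the same H\"older estimate applied to $\int_X\omega_X\w\dbar\chi_k\w\varphi$ directly allows $\dbar\chi_k$ to be taken in $L^4$ (keeping $q<q(X)$ and $\varphi\in L^p$), so that integral already tends to $0$ and $(*)$ is automatic; for $p(X)<p\le\hat p(X)$ it is exactly this integral that must be made to vanish, which is the role of $(*)$. The main obstacle is therefore the first step, the vanishing of $\K(\dbar\chi_k\w\varphi)$ on $D\setminus\{0\}$: this forces one to isolate the $\omega_X$-factor in the kernel and to control the remainder by the sharp integrability exponent $q(X)$ of $\omega_X$, and it is precisely there that the threshold $\hat p(X)$ and the necessity of $(*)$ emerge.
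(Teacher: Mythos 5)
Your proposal is correct and rests on the same key idea as the paper's proof --- reduce everything to showing $\K(\dbar\chi_k\w\varphi)\to 0$, then Taylor-expand the smooth factor of the kernel at $\zeta=0$ so that the zeroth-order term produces exactly the integral in $(*)$ while the remainder carries an extra factor $|\zeta|$ that compensates $|\dbar\chi_k|$ --- but the execution differs in several respects. The paper works distributionally on all of $D$ at once: it pairs $\K_2(\dbar\mu_k\w\varphi)$ with a test form, applies Fubini, and expands the resulting $z$-integral $\gamma(\zeta)$ at $\zeta=0$; the component $\K_1$ (which does not carry the $\omega_X$-factor but has an integrable kernel) is disposed of separately using that $\dbar\mu_k\w\varphi\to 0$ in $L^\lambda$, $\lambda=4p/(p+4)>1$. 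You instead fix $z\neq 0$, use that the \emph{whole} kernel factors as $\omega_X(\zeta)\w\beta(\zeta,z)$ with $\beta$ smooth near $\zeta=0$ off the diagonal (so no $\K_1$/$\K_2$ split is needed there), and pay for this with an extra ``no mass at the origin'' step to push the identity across $z=0$; that step is carried out correctly and uses only $\K\varphi,\K\dbar\varphi\in L^p$ with $p>4/3$. You also replace the paper's log-log cut-offs and dominated convergence by standard cut-offs with the explicit bound $\|\dbar\chi_k\|_{L^r}\lesssim\epsilon_k^{4/r-1}$ (legitimate, since $(*)$ is independent of the sequence), and for $\hat p(X)<p\le\infty$ you verify that $(*)$ is automatic and reuse the general argument, whereas the paper short-circuits this case via $\lambda>p(X)$ and Theorem~\ref{thm:main1}; these are equivalent in substance (the paper records your H\"older computation as a remark after its proof). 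Your explicit treatment of $p=\infty$ by running the limits in a finite $L^{p_0}$ addresses a point the paper leaves implicit. The exponent bookkeeping ($1/q+1/r+1/p=1$ with $q<q(X)$, the threshold $r<4$ giving precisely $p>\hat p(X)$ when the factor $|\zeta|$ is absent) checks out.
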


Notice that Theorem~\ref{thmA} follows from Theorems~\ref{thm:main1},
\ref{thm:main2} and \ref{thm:main3} and the discussion about the necessity
of the condition $(*)$ after the theorem.


Notice that if $\varphi$ is a $\dbar$-closed $(0,1)$-form with compact support then it
automatically satisfies $(*)$, and so we can solve $\dbar_w u=\varphi$ in $L^p$ if
$p(X)<p\leq \infty$. By means of a slight variation of the operator $\K$, introduced in \cite{AS1},
one can even get a solution with compact support.  In case $\varphi$ is a $(0,2)$-form in $L^p(X)$
with compact support and $\hat p(X) < p \leq \infty$, then there is a solution with compact support if and only if
\begin{equation}\label{kokett}
    \int_X \varphi\w\ h\omega_X=0  \quad \text{ for all } h\in \Ok(X),
\end{equation}
see Theorem \ref{kompakt2} below.

\bigskip

Our interest in canonical singularities is partly motivated by the earlier works \cite{LR,LR2},
where similar results as above are studied for affine cones over projective complete intersections.
The results about solvability in $L^p$ obtained in these articles are in case the degree of these homogeneous varieties is small enough.
Here, it is interesting to note that the degree is small if the singularities are mild in the sense of the minimal model program.
It turned out that positive results about solvability in $L^2$ hold precisely
for the varieties with canonical singularities, see \cite{LR2}.

The results in this article overlap with results from \cite{LR,LR2} only in the case of the $A_1$-singularity,
where in \cite{LR,LR2}, it was shown that the $\dbar_w$- and $\dbar_s$-equations are locally solvable in $L^p$ unconditionally for 
$p$ in certain intervals. 
On a general canonical surface, as studied in this article, solvability depends on the condition $(*)$.
The main novelty is the understanding of this condition and a quite sharp non-trivial estimate of the integral kernels
from \cite{AS} on such a surface. The final estimate of the integral operators is done along the same
lines as in \cite{LR,LR2}.

\medskip

We now consider the case of functions. There is an integral operator $\Pj\colon C^\infty_{0,0}(X)\to\Ok(D)$ in
\cite{AS1,AS} such that
\begin{equation}\label{trottel}
\varphi=\K\dbar\varphi+\Pj\varphi
\end{equation}
on $D\setminus\{0\}$.
In order to formulate the following result about extension of \eqref{trottel}
to $L^p$ we need a condition $(*)$ for  functions $\varphi$ that is explained in 
Section~\ref{gorilla} below. 


\begin{thm}\label{functions}
    Let $X$ be as above. Then the operator $\mathcal{P}$ extends to a compact operator $\mathcal{P} : L^p_{0,0}(X) \to \Ok(D)$
    for $1 \leq p \leq \infty$. If $\varphi \in \Dom \dbar^{(p)}_s \subseteq L^p_{0,0}(X)$ where $p(X) < p \leq 4$, then
    \eqref{trottel} holds.
    This equality also holds if $\varphi \in \Dom \dbar^{(p)}_w \subseteq L^p_{0,0}(X)$ and either $\hat{p}(X) <p \leq \infty$
    or $p(X) < p \leq \hat{p}(X)$ and $\varphi$ satisfies the condition $(*)$. 
\end{thm}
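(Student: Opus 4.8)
The plan is to run the same scheme that underlies Theorems~\ref{thm:main1}, \ref{thm:main2} and \ref{thm:main3}, transplanted to $(0,0)$-forms and to the operator $\Pj$ instead of $\K$. First I would address compactness of $\Pj\colon L^p_{0,0}(X)\to\Ok(D)$. Since $\Pj$ is given by an intrinsic integral kernel $P(\zeta,z)$ on $X\times D$ that (like $K$) contains the Poincar\'e residue $\omega_X$ as a factor in the $\zeta$-variable, the $L^p$-boundedness reduces to exactly the kind of kernel estimate established for $K$ in the proof of Theorem~\ref{thm:main1}: away from the singular point the kernel is the classical Cauchy--Fantappi\`e / Bochner--Martinelli type kernel on a smooth surface, so the only issue is the behavior near $0$, where $\omega_X\in L^q(X)$ for $q<q(X)$ controls the singularity. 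Because $\Pj\varphi$ is holomorphic on $D$ and $\dbar$-closed, there is no "critical exponent" phenomenon and one gets boundedness for all $1\le p\le\infty$; compactness then follows as for $\K$, e.g.\ by the standard splitting of the kernel into a part with a small $L^1$ (in $\zeta$, uniformly in $z$) tail near the diagonal and near $0$, plus a part with a continuous, hence compact, kernel, together with the Arzel\`a--Ascoli / Rellich argument on the holomorphic side. I expect this part to be essentially a rerun of the Section treating $\K$.

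Next I would establish \eqref{trottel} in $L^p$. On $D\setminus\{0\}$ the identity $\varphi=\K\dbar\varphi+\Pj\varphi$ holds for smooth $\varphi$ by \cite{AS1,AS}; the task is to pass to the relevant closed extension. For $\varphi\in\Dom\dbar_s^{(p)}$ with $p(X)<p\le4$ I would pick smooth $\varphi_j$ with support away from $0$ and $\varphi_j\to\varphi$, $\dbar\varphi_j\to\dbar\varphi$ in $L^p$; then $\varphi_j=\K\dbar\varphi_j+\Pj\varphi_j$ on $D\setminus\{0\}$, and by the already-proved $L^p$-boundedness of $\K$ (Theorem~\ref{thm:main1}) and of $\Pj$ one passes to the limit to get $\varphi=\K\dbar\varphi+\Pj\varphi$ in $L^p(D)$. (Here I would use that $\Dom\dbar_s^{(p)}$ for functions behaves like $\Dom\dbar_s^{(p)}$ for $(0,1)$-forms: the bound $p\le4$ enters only through Theorem~\ref{thm:main1}/\ref{thm:main2}, i.e.\ through $\K$, not through $\Pj$.) For $\varphi\in\Dom\dbar_w^{(p)}$ the situation is the "opposite" one already met in Theorem~\ref{thm:main3}: one has no a~priori smooth approximation, so one instead inserts cut-offs $\chi_k$ that are $1$ near $0$ with shrinking support, writes $(1-\chi_k)\varphi$ (which is admissible), applies \eqref{trottel} to it, and controls the error terms. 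The error naturally involves $\K\big(\dbar((1-\chi_k)\varphi)\big)=\K\big((1-\chi_k)\dbar\varphi\big)-\K\big(\dbar\chi_k\wedge\varphi\big)$; the first piece converges by dominated convergence and boundedness of $\K$, and the second piece is precisely where the condition $(*)$ (for functions, in the sense of Section~\ref{gorilla}) is designed to kill the residual boundary contribution, exactly as in the $(0,1)$-form case. When $\hat p(X)<p\le\infty$ this residual term vanishes automatically (same reasoning as in the comment after the proof of Theorem~\ref{thm:main3}), and when $p(X)<p\le\hat p(X)$ it vanishes under $(*)$.

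The main obstacle I anticipate is not the limiting arguments but getting the $\Pj$-kernel estimate near the singular point sharp enough — i.e.\ verifying that the mapping $\Pj\colon L^p_{0,0}(X)\to\Ok(D)$ really is bounded (and compact) for \emph{all} $p\ge1$, including $p=1$, uniformly up to $0$. Concretely, one must show that $\int_X |P(\zeta,z)|\,dV(\zeta)$ is bounded uniformly in $z\in D$ and that $\sup_\zeta\int_D|P(\zeta,z)|\,dV(z)$ is finite, using only $\omega_X\in L^q$, $q<q(X)$, and the structure of the remaining (bounded, holomorphic-in-$z$) factors of $P$; the delicate point is the interaction of the $\omega_X$-singularity at $\zeta=0$ with the Cauchy-type singularity at $\zeta=z$. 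Once this estimate is in hand, compactness and the two extension statements follow by the arguments above, and the proof of Theorem~\ref{functions} is complete.
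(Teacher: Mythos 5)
There is a genuine gap in your first part, precisely at the point you flag as ``the main obstacle.'' Your plan to prove boundedness of $\Pj\colon L^p_{0,0}(X)\to\Ok(D)$ for all $1\le p\le\infty$ ``using only $\omega_X\in L^q$, $q<q(X)$'' cannot deliver the low end of that range. If the kernel $P=\omega_X\wedge\tilde P$ genuinely carried the singularity of $\omega_X$ at $\zeta=0$, then $L^1$-boundedness into functions bounded on $D$ would require $\sup_{\zeta,z}|P(\zeta,z)|<\infty$, i.e., essentially $\omega_X\in L^\infty$ near the singular point, which is false since $|\omega_X|\gtrsim 1/|\zeta|$ there. Also, there is no ``Cauchy-type singularity at $\zeta=z$'' to interact with: $\tilde P$ comes from $h\wedge g_{n,n}$ and contains no Bochner--Martinelli factor. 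The observation you are missing --- stated in the paper right after the definition of $P$ --- is that for the weight $g$ of Example~\ref{viktex} the component $g_{n,n}$ carries the factor $\dbar\chi$ and is therefore supported in $\supp\dbar\chi$, away from $\overline\Omega$ and in particular away from the singular point $0$. Hence $P(\zeta,z)$ is smooth with compact support in $\zeta$ (the factor $\omega_X$ is only evaluated where it is holomorphic) and holomorphic in $z$; boundedness for every $1\le p\le\infty$ is then immediate, and compactness follows, e.g., by Montel's theorem applied to the bounded family of holomorphic functions $\Pj\varphi_j$. Your alternative heuristic (``because $\Pj\varphi$ is holomorphic there is no critical exponent phenomenon'') is not an argument: holomorphy of the output says nothing about boundedness of the operator.

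The remainder of your proposal is essentially the paper's argument and is fine: for $\varphi\in\Dom\dbar_s^{(p)}$, $p(X)<p\le4$, approximate by smooth $\varphi_j$ supported away from $0$ and pass to the limit in $\varphi_j=\K\dbar\varphi_j+\Pj\varphi_j$ using Theorem~\ref{thm:main1} and the boundedness of $\Pj$; for $\varphi\in\Dom\dbar_w^{(p)}$, insert the cut-offs $\mu_k$ and reduce everything to showing $\K(\dbar\mu_k\wedge\varphi)\to0$, which holds automatically for $p>\hat p(X)$ and under $(*)$ for $p(X)<p\le\hat p(X)$. The one detail you leave implicit is how $(*)$ in the form \eqref{star-functions} is actually invoked: in the function case the inner integral $\gamma(\zeta)$ arising in the Fubini step is a $(0,1)$-form, so one writes $\gamma=\sum\gamma_i\,d\bar\zeta_i$, splits $\gamma_i(\zeta)=\gamma_i(0)+(\gamma_i(\zeta)-\gamma_i(0))$, and applies \eqref{star-functions} with $\alpha=\gamma_i(0)\,d\bar\zeta_i$. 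This is a routine modification, but it is the reason the condition for functions is formulated with an arbitrary smooth $\dbar$-closed $(0,1)$-form $\alpha$.
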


\smallskip

The present paper is organised as follows. After providing some preliminaries in
Section~\ref{sec:preliminaries}, in Section~\ref{sec:main} we recall the integral
formulas from \cite{AS1, AS},
analyse their integral kernels and prove Theorem ~\ref{thm:main1} and its corollary.
Section ~\ref{sec:homotopy-formulas} is devoted to $\dq$-homotopy formulas and proofs
of Theorems ~\ref{thm:main2},  ~\ref{thm:main3} and ~\ref{functions} and also
to a discussion of condition $(*)$. 
We also include a discussion of the domain of the $\dbar_X$-operator from
\cite{AS} and prove that $\mathcal K\varphi\in \Dom \dbar_X$ for
certain $\varphi\in \Dom \dbar_s$, see Theorem \ref{thm:main5}.
In Section ~\ref{sec:examples} we characterize the du~Val singularities with respect to $(*)$.
Finally 
we recall some integral estimates on singular varieties from \cite{LR2} in an appendix, Section ~\ref{sect:basic-estimates}.

\section{Preliminaries}\label{sec:preliminaries}

In this section we specify the spaces of differential forms that we consider
and explain some basic tools. Throughout the section
$i\colon X \hookrightarrow\Omega'\subset \C^N$ is
an analytic variety of pure dimension $n$, and $D \subset\subset X$
is an open subset of $X$.


\subsection{$C^\alpha$- and $L^p$-forms on an analytic variety}\label{ssec:lp-forms}
%
Let $1\leq p\leq \infty$. Since $D^* := D \cap X_{reg}$ is a submanifold of
some open subset of $\C^N$, it inherits a Hermitian metric from $\C^N$.
We say that a $(0,\rr)$-form $\varphi$ is in $L^p_{0,\rr}(D)$ if $\varphi|_{D^*}$ is in $L^p_{0,\rr}(D^*)$ with respect
to the induced volume form $dV_X$.
When it is clear from the context, we will drop the subscript in $L^p_{0,\rr}(D)$.

It will be convenient to represent  $(0,\rr)$-forms on $X$ in a certain ``minimal'' manner:
Any $(0,\rr)$-form $\varphi$ on $D^*$ can be written (uniquely) in the form
\begin{equation}\label{apa}
    \varphi = \sum_{|I|=\rr} \varphi_I d\bar{z}_I,
\end{equation}
where $d\bar z_I=d\bar z_{i_1}\wedge\cdots\wedge d\bar z_{i_\rr}$ if
$I=\{i_1,\ldots, i_\rr\}$,
such that
\begin{equation} \label{eq:minrep}
    |\varphi|^2 (z)= {2}^\rr \sum |\varphi_I|^2(z)
\end{equation}
at each point $z\in D^*$.  In fact, one starts with any representation and then at each
point takes the orthogonal projection of the form onto
$\Lambda^{0,\rr}T^*D^*$, see, e.g., \cite[Lemma~2.2.1]{RuThesis}.
In particular, then $\varphi \in L^p_{0,\rr}(D)$ if and only if $\varphi_I \in L^p(D)$ for all $I$.
If one has an arbitrary representation of $\varphi$ of the form \eqref{apa}, then
\begin{equation} \label{eq:non-minrep}
    |\varphi|^2 (z) \leq {2}^\rr \sum |\varphi_I|^2(z),
\end{equation}
and so, in general, $\varphi \in L^p_{0,\rr}(D)$ if $\varphi_I \in L^p(D)$ for all $I$.

\smallskip

Recall that a form $\varphi$ on $D$ is in $C^k(D)$, $0\le k\le\infty$, if  (locally) it is the pullback of a $C^k$-form in ambient space; i.e., there exists a representation
\eqref{apa} such that all the coefficients (locally) admit $C^k$-extensions
to a neighborhood of $D$.
For $0 \leq \alpha < 1$, we say that a $(0,\rr)$-form $\varphi$ on $D$ is
$C^\alpha$ if locally on $D$  there is a representation \eqref{apa} such that all the coefficients $\varphi_I$ are
$C^\alpha$, i.e., H\"older continuous with exponent $\alpha$, on $D$.  It is well-known,
that a function that is $C^\alpha$ on $D$ has a $C^\alpha$-extension to
ambient space, see, e.g., \cite{M}.   Thus a form $\varphi$ on $D$ is in $C^\alpha$ if and only if
it is the pull-back to $D$ of a $C^\alpha$-form in ambient space.
Notice that $C^1(D)\subset C^\alpha(D)$ for all $\alpha<1$.
%
For $\alpha = 1$, we denote the Lipschitz continuous functions by $C^{0,1}(D)$
in order to avoid conflict of notation with continuously differentiable functions.

It is not hard to check that these definitions are independent of the choice
of embedding of $X$, and hence are intrinsic notions on $X$.
Fix an embedding $D\to \Omega\subset \C^{N}$.  We can then define the
H\"older-norm
 \begin{equation}\label{apa2}
\|\varphi\|^2_\alpha=
\inf {2}^\rr \sum \|\varphi_I\|^2_{\alpha},
\end{equation}
 of a form $\varphi$ on $D$, where the infimum runs over all
representations \eqref{apa} of $\varphi$ in ambient space, and  the norms
on the right hand side  of \eqref{apa2} are over $D$.
This norm is, up to constants, independent of the embedding.


\begin{remark}
Regularity properties of $\varphi$ like smoothness, H\"older continuity etc, will be reflected
by the coefficients on $D^*$ in the minimal representation
\eqref{eq:minrep} above.  However, even if $\varphi$ is smooth across the singularity,
the coefficients in the minimal representation may be discontinuous there.
\end{remark}

Using the minimal representation \eqref{eq:minrep}, and the inequality \eqref{eq:non-minrep}
for not necessarily minimal representations, and the analogous
inequality for H\"older norms, we get the following lemma. 

\begin{lem} \label{lma:Lpformsfunctions}
If $\mathcal{K}$ is an integral operator mapping $(0,\rr)$-forms in $\zeta$ to $(0,\rr-1)$-forms in $z$,
defined by an integral kernel
\begin{equation*}
    K(\zeta,z) = \sum_{|L|=n,|I|=\rr-1,|J|={n-\rr}} K_{I,J,L}(\zeta,z) d\overline{z}_I \wedge {d\overline{\zeta}_J} \wedge d\zeta_L,
\end{equation*}
then $\mathcal{K}$ is a bounded linear map
$L_{0,\rr}^p(X) \to L_{0,\rr-1}^p(D)$ if
\begin{equation*}
    f(\zeta) \mapsto \int_{X} K_{I,J,L}(\zeta,z) f(\zeta) dV_X(\zeta)
\end{equation*}
is a bounded linear map $L^p(X) \to L^p(D)$, and a bounded linear map
$L_{0,\rr}^\infty(X) \to C_{0,\rr-1}^\alpha(D)$ if
\begin{equation*}
    f(\zeta) \mapsto \int_{X} K_{I,J,L}(\zeta,z) f(\zeta) dV_X(\zeta)
\end{equation*}
is a bounded linear map $L^\infty(X) \to C^\alpha(D)$.
\end{lem}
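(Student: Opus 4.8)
The plan is to reduce the statement about forms to the scalar statement about the coefficient operators, using precisely the ``minimal representation'' machinery recalled just above. First I would fix a form $\varphi \in L^p_{0,\rr}(X)$ and write it in its minimal representation $\varphi = \sum_{|J'|=\rr}\varphi_{J'}\, d\bar\zeta_{J'}$ as in \eqref{apa}, so that by \eqref{eq:minrep} we have $\|\varphi\|_{L^p(X)}^2$ comparable to $\sum_{J'}\|\varphi_{J'}\|_{L^p(X)}^2$; in particular each coefficient $\varphi_{J'}$ lies in $L^p(X)$ with norm bounded by $\|\varphi\|_{L^p(X)}$. Applying the kernel $K(\zeta,z)$ to $\varphi$ amounts to contracting $d\bar\zeta_{J'}$ against the factor $d\bar\zeta_J\w d\zeta_L$ in $K$; the outcome is a $(0,\rr-1)$-form in $z$ whose $d\bar z_I$-coefficient is, up to a sign and combinatorial bookkeeping, a finite sum of terms $\int_X K_{I,J,L}(\zeta,z)\,\varphi_{J'}(\zeta)\,dV_X(\zeta)$ with $J = $ (some subset related to $J'$). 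Thus $\mathcal K\varphi$ has a representation of the form \eqref{apa} whose coefficients are finite sums of images of the scalar operators $f\mapsto \int_X K_{I,J,L}(\zeta,z) f(\zeta)\,dV_X(\zeta)$ applied to the $\varphi_{J'}$.

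Next I would invoke the hypothesis. In the $L^p\to L^p$ case, each scalar operator is bounded $L^p(X)\to L^p(D)$, so each coefficient of the representation of $\mathcal K\varphi$ lies in $L^p(D)$ with norm bounded by a constant times $\|\varphi\|_{L^p(X)}$. Since this representation need not be minimal, I apply \eqref{eq:non-minrep}: the pointwise norm of $\mathcal K\varphi$ is controlled by $2^{\rr-1}\sum_I |(\mathcal K\varphi)_I|^2$, whence $\mathcal K\varphi \in L^p_{0,\rr-1}(D)$ with the desired bound. For the $L^\infty \to C^\alpha$ case the argument is identical, using the H\"older-norm inequality analogous to \eqref{eq:non-minrep} referred to in the paragraph preceding the lemma: each coefficient is $C^\alpha$ on $D$ by the scalar hypothesis, a finite sum of $C^\alpha$ functions is $C^\alpha$, and then the inequality for H\"older norms shows $\mathcal K\varphi \in C^\alpha_{0,\rr-1}(D)$. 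Linearity of $\mathcal K$ is immediate from linearity of the integral.

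The only genuine point requiring care — and the one I would flag as the main obstacle, though it is really bookkeeping rather than a deep difficulty — is verifying that contracting the minimal representation of $\varphi$ into $K(\zeta,z)$ really does produce a representation of $\mathcal K\varphi$ of the form \eqref{apa} with coefficients that are \emph{finite linear combinations} of the scalar operators $f\mapsto \int_X K_{I,J,L} f\, dV_X$ applied to the $\varphi_{J'}$, with the combinatorial coefficients bounded independently of anything. One must be slightly careful that the factor $d\bar z_I$ in $K$ already supplies the $z$-antiholomorphic part of the output form, and that no cancellation or reindexing introduces coefficients depending on $z$ in an uncontrolled way; but since $K_{I,J,L}(\zeta,z)$ already carries all $(\zeta,z)$-dependence and the wedge-contraction only produces $\pm$ signs and Kronecker deltas, this is routine. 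I would therefore state this reduction explicitly but carry out the index chase only far enough to make the finiteness and uniformity of the combinatorial constants evident, then conclude by the two norm inequalities as above.
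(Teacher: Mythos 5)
Your proposal is correct and follows exactly the route the paper takes: the paper "proves" this lemma in the single sentence preceding it, by combining the minimal representation \eqref{eq:minrep} of $\varphi$ (to control the coefficients $\varphi_{J'}$ by $\|\varphi\|_{L^p}$) with the inequality \eqref{eq:non-minrep} for the not-necessarily-minimal representation of $\mathcal K\varphi$ and its H\"older analogue. Your write-up just makes the implicit index bookkeeping explicit, which is fine.
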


\subsection{Cut-off functions}\label{ssec:cut-off}

We will use the following cut-off functions to approximate forms
by forms with support away from isolated singularities. 
As in the proof of Proposition~3.3 in \cite{PS}, let $\rho_k: \R\rightarrow [0,1]$, $k\geq 1$, be smooth cut-off functions
satisfying
$$\rho_k(x)=\left\{\begin{array}{ll}
1,\  x\leq k,\\
0,\  x\geq k+1,
\end{array}\right.$$
and $|\rho_k'|\leq 2$. Moreover, let $r: \R_+\rightarrow [0,1/2]$ be a smooth increasing function such that
$$r(x)=\left\{\begin{array}{ll}
x,\  & x\leq 1/4,\\
1/2,\  & x\geq 3/4,
\end{array}\right.$$
and $|r'|\leq 1$.
As cut-off functions we will use $\mu_k(\zeta):=\rho_k\big(\log(-\log r(|\zeta|))\big)$
on $X$ if $X$ has an isolated singularity at $0$. Note that there is a
constant $C$ such that
\begin{equation}\label{eq:cutoff2}
\big| \dq \mu_k(\zeta)\big| \leq C\frac{\chi_k(|\zeta|)}{|\zeta| \big| \log|\zeta|\big|},
\end{equation}
where $\chi_k$ is the characteristic function of $[e^{-e^{k+1}}, e^{-e^k}]$.

\begin{lem}{\cite[Lemma 5.1]{LR2}}\label{lem:cut-off}
Let $\varphi\in L^p_{0,\rr}(D)$
with $\dq_w \varphi\in L^{p'}_{0,\rr+1}(D)$, where $\frac{2n}{2n-1}\leq p \leq \infty$ and $1\leq p' \leq \infty$.
Let $\varphi_k :=\mu_k \varphi$
and define $1\leq \lambda \leq 2n$ by the relation
\begin{equation}\label{defn:lambda}
\frac{1}{\lambda} = \frac{1}{p} + \frac{1}{2n}.
\end{equation}
If $\gamma=\min\{\lambda, p'\}$, then
$
\varphi_k \to \varphi \ \mbox{ in }\  L^p_{0,\rr}(D),\quad
\dq \varphi_k \rightarrow \dq_w \varphi \ \mbox{ in } \ L^\gamma_{0,\rr+1}(D).
$
\end{lem}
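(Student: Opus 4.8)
The plan is to prove Lemma~\ref{lem:cut-off} by combining a pointwise estimate for $\dbar\varphi_k$ with the Leibniz rule and dominated convergence. First I would write $\dbar\varphi_k = \mu_k\,\dbar_w\varphi + \dbar\mu_k\wedge\varphi$ on $D^* = D\cap X_{reg}$ (which is legitimate since $\mu_k$ is smooth and $\varphi\in\Dom\dbar_w$). The first term $\mu_k\dbar_w\varphi$ converges to $\dbar_w\varphi$ in $L^{p'}$ as $k\to\infty$ by dominated convergence, since $0\le\mu_k\le 1$ and $\mu_k\to 1$ pointwise on $X\setminus\{0\}$; hence it also converges in $L^\gamma$ provided $\gamma\le p'$, which holds by the definition $\gamma=\min\{\lambda,p'\}$. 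Similarly $\varphi_k=\mu_k\varphi\to\varphi$ in $L^p$ by the same dominated convergence argument. So the only real work is the error term $\dbar\mu_k\wedge\varphi$.

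For the error term I would use the pointwise bound \eqref{eq:cutoff2}, namely $|\dbar\mu_k(\zeta)|\le C\,\chi_k(|\zeta|)/(|\zeta|\,|\log|\zeta||)$, where $\chi_k$ is supported in the shrinking annulus $A_k:=\{e^{-e^{k+1}}\le|\zeta|\le e^{-e^k}\}$. Therefore $|\dbar\mu_k\wedge\varphi|\le C\,\mathbf{1}_{A_k}(\zeta)\,|\varphi(\zeta)|/(|\zeta|\,|\log|\zeta||)$ pointwise on $D^*$. I then need to show this tends to $0$ in $L^\gamma$. The key input is that $1/(|\zeta|\,|\log|\zeta||)$ is locally in $L^{2n}$ near $0$ on $X$: indeed, using polar-type coordinates on the $n$-dimensional variety, $\int_{|\zeta|<\varepsilon} |\zeta|^{-2n}|\log|\zeta||^{-2n}\,dV_X \sim \int_0^\varepsilon r^{-2n}(\log(1/r))^{-2n} r^{2n-1}\,dr = \int_0^\varepsilon \frac{dr}{r(\log(1/r))^{2n}}<\infty$ since the exponent $2n>1$; moreover this tail integral over $A_k$ goes to $0$ as $k\to\infty$. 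Now by H\"older's inequality with exponents chosen so that $1/\lambda = 1/p + 1/(2n)$ (this is precisely \eqref{defn:lambda}), we get
\[
\|\dbar\mu_k\wedge\varphi\|_{L^\lambda(D)} \le C\,\|\varphi\|_{L^p(A_k\cap D)}\,\Big\|\tfrac{\mathbf{1}_{A_k}}{|\zeta|\,|\log|\zeta||}\Big\|_{L^{2n}(D)} \longrightarrow 0,
\]
since the first factor is bounded (it is the $L^p$-norm of $\varphi$ over a shrinking set, hence bounded by $\|\varphi\|_{L^p(D)}$, and in fact tends to $0$ by dominated convergence), and the second factor tends to $0$ by the computation above. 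The hypothesis $p\ge 2n/(2n-1)$ is exactly what guarantees $\lambda\ge 1$ (equivalently $1/p\le 1-1/(2n)=(2n-1)/(2n)$), so that $L^\lambda$ is a genuine norm; and $\lambda\le 2n$ is automatic. Since $\gamma=\min\{\lambda,p'\}\le\lambda$ and $D$ is relatively compact, convergence in $L^\lambda(D)$ implies convergence in $L^\gamma(D)$.

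Putting the two pieces together, $\dbar\varphi_k = \mu_k\dbar_w\varphi + \dbar\mu_k\wedge\varphi \to \dbar_w\varphi$ in $L^\gamma_{0,\rr+1}(D)$, and $\varphi_k\to\varphi$ in $L^p_{0,\rr}(D)$, which is the assertion. One should be slightly careful that all the manipulations with $|\cdot|$ refer to the minimal representation \eqref{eq:minrep}, so that the pointwise inequalities between norms of forms are valid; this is routine given the setup in Section~\ref{ssec:lp-forms}, and the wedging with $\dbar\mu_k$ only introduces a bounded factor controlled by $|\dbar\mu_k|$. The main obstacle is really just the borderline integrability computation for $1/(|\zeta|\,|\log|\zeta||)$ on the $n$-dimensional variety $X$ near the singular point: one needs that slicing/integration in the ambient radial variable is compatible with $dV_X$ up to constants on $X_{reg}$ near $0$ (a standard fact about the volume of an analytic variety), and that the logarithmic factor provides exactly the convergence at the endpoint exponent $2n$. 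Everything else is dominated convergence and H\"older.
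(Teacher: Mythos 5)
Your proof is correct and is essentially the argument behind the cited source \cite[Lemma~5.1]{LR2} (the paper itself does not reprove this lemma): Leibniz rule plus dominated convergence for $\mu_k\varphi$ and $\mu_k\dbar_w\varphi$, and for the error term the estimate \eqref{eq:cutoff2} combined with H\"older at exponents $p$ and $2n$ and the fact that $\chi_k(|\zeta|)/(|\zeta|\,|\log|\zeta||)$ tends to $0$ in $L^{2n}$ near the singularity --- which is precisely what the doubly logarithmic cut-off is designed for, and which is justified by the volume bound $\mathrm{vol}(X\cap B_r)\sim r^{2n}$ as in the appendix. The one caveat is the endpoint $p=\infty$, where dominated convergence does not yield $\mu_k\varphi\to\varphi$ in $L^\infty$ (take $\varphi\equiv 1$); this is a defect of the statement as quoted rather than of your argument, and the second conclusion, which always lives in $L^\gamma$ with $\gamma\le 2n<\infty$, is unaffected.
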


\subsection{On the domain of the $\dq_s$-operator}

\begin{lem}{\cite[Lemma 5.2]{LR2}}\label{lem:cut-off2}
Assume that $X$ has an isolated singularity at $0\in D$ and that $D$ has smooth boundary.
Let $1\leq p \leq 2n$
and
let $\varphi\in L^p_{0,\rr}(D)$ such that $\varphi \in \Dom \dq_w^{(p)}$.
Then $\varphi\in \Dom\dq_s^{(p)}$ if and only if there
exists a sequence of bounded forms $\varphi_j\in L^\infty_{0,\rr}(D)$, $\varphi_j\in\Dom\dq_w^{(p)}$,
such that
\begin{equation}\label{eq:appr1}
\varphi_j \to \varphi \  \ \mbox{ in } \ L^p_{0,\rr}(D),\quad
\dq_w \varphi_j \to \dq_w \varphi \  \ \mbox{ in } \ L^p_{0,\rr+1}(D).
\end{equation}
\end{lem}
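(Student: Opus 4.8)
The plan is to reduce the statement to Lemma~\ref{lem:cut-off} plus the already-cited characterization of $\Dom\dq_s^{(p)}$ from \cite{LR2}. The ``only if'' direction is essentially the definition of the strong extension: if $\varphi\in\Dom\dq_s^{(p)}$ there is a sequence of smooth forms $\varphi_j$ with support away from $0$ such that $\varphi_j\to\varphi$ and $\dq\varphi_j\to\dq_s\varphi=\dq_w\varphi$ in $L^p$; these $\varphi_j$ are in particular bounded forms in $\Dom\dq_w^{(p)}$, so \eqref{eq:appr1} holds.

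For the ``if'' direction, suppose we have bounded forms $\varphi_j\in\Dom\dq_w^{(p)}$ satisfying \eqref{eq:appr1}. First I would reduce to the case of a single bounded form: by a diagonal argument it suffices to show that every bounded $\psi\in\Dom\dq_w^{(p)}$ with $\dq_w\psi\in L^p$ lies in $\Dom\dq_s^{(p)}$, since then each $\varphi_j$ does and $\Dom\dq_s^{(p)}$ is closed in the graph norm. So fix such a bounded $\psi$. Apply the cut-off functions $\mu_k$ from Section~\ref{ssec:cut-off}: set $\psi_k=\mu_k\psi$. Each $\psi_k$ has support away from $0$, but is not yet smooth; however, on the manifold $D^*$ away from $0$ one can regularize $\psi_k$ by standard means (convolution in ambient coordinates after a partition of unity, or Friedrichs mollifiers adapted to $X_{reg}$) to get smooth forms with support away from $0$ approximating $\psi_k$ together with their $\dq$. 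The point of using $\mu_k$ is that, by Lemma~\ref{lem:cut-off} applied with $p'=p$ (legitimate since $\dq_w\psi\in L^p$ and $p\le 2n$ forces $\lambda\ge p$... more precisely $\gamma=\min\{\lambda,p\}$, and one checks $\lambda\ge p$ fails in general) --- here is the subtlety --- we only get $\dq\psi_k\to\dq_w\psi$ in $L^\gamma$ with $\gamma=\min\{\lambda,p\}\le p$, not necessarily in $L^p$.

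To fix this I would use that $\psi$ is \emph{bounded}: write $\dq\psi_k=\mu_k\,\dq_w\psi+\dq\mu_k\w\psi$. The first term converges to $\dq_w\psi$ in $L^p$ by dominated convergence. For the second term, $|\dq\mu_k\w\psi|\le \|\psi\|_\infty|\dq\mu_k|$, and by \eqref{eq:cutoff2} this is bounded by $C\|\psi\|_\infty\,\chi_k(|\zeta|)/(|\zeta||\log|\zeta||)$, which is supported on the shrinking annulus $\{e^{-e^{k+1}}\le|\zeta|\le e^{-e^k}\}$. Since $X$ is a surface, $n=2$ and $dV_X\lesssim |\zeta|^{3}$ times the Euclidean area measure on the annulus near the singular point in suitable coordinates; one computes that the $L^p$-norm of $\chi_k/(|\zeta||\log|\zeta||)$ over this annulus tends to $0$ as $k\to\infty$ for every $p<\infty$ (the relevant integral is $\int_{e^{-e^{k+1}}}^{e^{-e^k}} r^{-p}(\log r)^{-p} r^{\,a}\,dr$ with $a$ the appropriate volume exponent, and for the du~Val surface $a$ is large enough that this integral is finite and vanishes in the limit). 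Hence $\dq\mu_k\w\psi\to 0$ in $L^p$, so $\dq\psi_k\to\dq_w\psi$ in $L^p$ and $\psi_k\to\psi$ in $L^p$ (again dominated convergence). Finally regularize each $\psi_k$ away from $0$ to obtain smooth compactly-supported-away-from-$0$ forms, giving $\psi\in\Dom\dq_s^{(p)}$.

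The main obstacle is exactly the loss of integrability in Lemma~\ref{lem:cut-off}: one cannot simply quote it with $p'=p$ because $\gamma$ may be strictly less than $p$, so the convergence $\dq\psi_k\to\dq_w\psi$ must be re-established in $L^p$ by hand using boundedness of $\psi$ together with the explicit bound \eqref{eq:cutoff2} and the volume growth of $dV_X$ near a du~Val singularity. Everything else --- the reduction to a single bounded form and the final regularization step on the smooth locus --- is routine and well known (e.g.\ via \cite[Satz~4.3.3]{RuThesis} or the mollification arguments in \cite{PS,RuThesis}).
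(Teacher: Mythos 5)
The paper does not prove this lemma itself but quotes it from \cite[Lemma 5.2]{LR2}, and your argument is essentially the one given there: the double-logarithmic cutoffs $\mu_k$ satisfy $\|\dq\mu_k\|_{L^p(X)}\to 0$ for $1\le p\le 2n$ because $\mathrm{vol}(X\cap B_r)\sim r^{2n}$, so the $L^\infty$-bound on $\psi$ gives $\dq\mu_k\wedge\psi\to 0$ in $L^p$, after which one mollifies on $X_{reg}$ (this last step is where the smooth-boundary hypothesis on $D$ enters, which you should invoke explicitly). Your observation that Lemma~\ref{lem:cut-off} alone only yields convergence in $L^\gamma$ with possibly $\gamma<p$, and that boundedness of $\psi$ is what rescues the $L^p$-convergence, is exactly the point of the lemma; the only blemish is the aside about $dV_X\lesssim|\zeta|^3$ and the restriction to surfaces, which is both imprecise and unnecessary --- the correct and sufficient input is the $r^{2n}$ volume growth for a pure $n$-dimensional variety recalled in the appendix, which makes the argument work verbatim for all $1\le p\le 2n$.
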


\medskip
\section{Integral operators on surfaces with  canonical singularities}
\label{sec:main}

\subsection{The Koppelman integral kernels for a hypersurface}
\label{ssec:kernels}

Let us recall the definition of the Koppelman integral operators from \cite{AS}
in the situation of a hypersurface $i\colon X \subset\Omega'\subset \C^{n+1}$
defined by $X = \{ \zeta \in \Omega';\  f(\zeta) = 0 \}$, where $f$ is a holomorphic function on $\Omega'$ and $df$
is non-vanishing on $X_{reg}$, where $\Omega'$ is
pseudoconvex.
Let $\Omega \subset\subset \Omega'$ be an open set
and let $D := X \cap \Omega$.

Let $\omega_X$ be the
Poincar\'e residue of the meromorphic form
$d\zeta_1 \wedge\ldots\wedge d\zeta_{n+1}/f$.
This means that $\omega_X$ is the unique meromorphic $(n,0)$-form on $X$ such that
 \begin{equation}\label{eq:structure-form1}
df\wedge \omega_X  =2\pi i
d\zeta_1 \wedge \cdots  \wedge d\zeta_{n+1}.
\end{equation}
In \cite[Section~3]{AS} so-called structure forms were introduced as
generalizations of the Poincar\'e residue for more general $X$; we
will therefore refer to $\omega_X$ as the structure form on $X$.
Recall that $1/f$ and $\omega_X$ define  principal value currents on
$\Omega'$ and $X$, respectively. Identifying these with their respective currents,
$\omega_X$ can be defined as the unique current such that
$$
i_*\omega_X=  \dbar\frac{1}{f}\w d\zeta_1 \wedge \cdots \wedge d\zeta_{n+1}.
$$

For coordinates $\zeta=(\zeta_1,\ldots, \zeta_{n+1})$
such that $\partial f/\partial \zeta_1$ is generically non-vanishing on $X_{reg}$,
$\omega_{X}$ is the pull-back of
\begin{equation}\label{eq:structure-form0}
2\pi i\frac{d\zeta_2 \wedge \cdots \wedge d\zeta_{n+1}}{\partial f / \partial \zeta_1}
\end{equation}
to $X$.
Alternatively, letting
\begin{equation}\label{eq:structure-form2}
   \vartheta:= 2\pi i\sum_{\ell=1}^{n+1} \frac{\overline{f'_{\ell}}}{|\partial f|^2}\frac{\partial}{\partial\zeta_{\ell}},
\end{equation}
where $f'_{\ell}=\partial f/\partial \zeta_{\ell}$, we have that $\omega_X$ is realised as the pull-back to $X$
of $\vartheta\lrcorner d\zeta_1\wedge\cdots\wedge d\zeta_{n+1}$.
Here, the norm $|\partial f|$ is computed in $\C^{n+1}$, i.e., $|\partial f|^2=\sum |f_l'|^2$.

Let $\eta_j=\zeta_j-z_j$ and
let $\delta_\eta$ be interior multiplication
by $2\pi i \sum \eta_j \partial/\partial \zeta_j$.
We will consider forms with anti-holomorphic differentials
of both $\zeta$ and $z$ but only holomorphic
differentials with respect to $\zeta$.
The (full) Bochner-Martinelli form is $B := b + b\w \dq b + \dots + b\w (\dq b)^{n},$
where
\begin{equation*}
    b := \frac{1}{2\pi i}\frac{\bar\eta_1 d\zeta_1+\ldots+\bar\eta_{n+1}d\zeta_{n+1}}{|\eta|^2} = \frac{1}{2\pi i}\frac{\bar{\eta}\cdot  d\zeta}{|\eta|^2}.
\end{equation*}
Notice that
\begin{equation}\label{orm}
B_k:=b\w (\dbar b)^{k-1}=\frac{1}{(2\pi i)^k} \frac{\bar\eta\cdot d\zeta\w (d\bar\eta\w d\zeta)^{k-1}}{|\eta|^{2k}}=\Ok(1/|\eta|^{2k-1}),
\end{equation}
where $d\bar\eta\w d\zeta=d\bar \eta_1\w d\zeta_1+\cdots + d\bar \eta_{n+1}\w d\zeta_{n+1}$. 

A smooth form $g=g_{0,0}+\cdots + g_{n+1,n+1}$ in $\Omega'\times\Omega'$,  here lower indices
denote bidegree,  is a {\it weight with respect to $\Omega$} if
$(\delta_\eta-\dbar)g=0$ and $g_{0,0}(z,z)=1$ for $z\in\overline \Omega$.  We say that $g$ is
holomorphic with respect to $z$ if the coefficients are holomorphic in $z$ and
there are no anti-holomorphic differentials with respect to $z$.

\begin{ex}[Holomorphic weights with compact support]\label{viktex}
Let $\chi=\chi(\zeta)$ be a cut-off function with compact support in $\Omega'$
which is $1$ in a Stein neighborhood $\Omega''\subset\subset\Omega'$
of $\overline\Omega$. Moreover, let $s(\zeta,z) = \sum s_i(\zeta,z)
d\zeta_i$ be a $(1,0)$-form defined for $\zeta\in \supp \dbar\chi$ and
$z\in \overline \Omega$, such that $\delta_\eta s = 1$ and $s$ is
smooth in $\zeta$ and holomorphic in $z$.
%
Such an  $s$ exists since $\Omega''$ is Stein in $\Omega'$.
Then
\begin{equation*}
    g := \chi - \dq \chi \wedge \big(s+ s\w(\dq s) + \dots + s\w (\dq s)^{n}\big)
\end{equation*}
is a weight in $\Omega'\times\Omega'$ with respect to $\Omega$ that has compact support in
$\Omega_\zeta'$ and is holomorphic with respect to $z$. 
If $\Omega$ is the unit ball in $\C^{n+1}$  we can choose
\begin{equation*}
    s= \frac{\overline{\zeta} \cdot d\zeta}{2\pi i(|\zeta|^2-\bar{\zeta}\cdot z)}.
\end{equation*}
 \end{ex}

A holomorphic $(1,0)$-form $h=h_1 d\zeta_1+\cdots + h_{n+1}d\zeta_{n+1}$ in $\Omega'\times\Omega'$
is a {\it Hefer form} for $f$ if
$
\delta_\eta h=f(\zeta)-f(z).
$
 Since $h_j(\zeta,\zeta)=(2\pi i)^{-1}\partial f/\partial \zeta_j$ it follows that
\begin{equation}\label{eq:hefer0}
h (\zeta,z) = (2\pi i)^{-1} \, df(\zeta)  + O(|\eta|),
\end{equation}
where $O(|\eta|)$ is a holomorphic $1$-form with coefficients in the ideal generated by $\eta_1,\ldots,\eta_{n+1}$.

\smallskip
Let $h$ be such a Hefer form and let $g$ be a weight as in Example~\ref{viktex}. 
We can then define an integral operator  $\mathcal{K}$ that acts on  forms on $X$ and 
produces forms on $D=X\cap \Omega'$ in the following way:
We let  
\begin{equation}\label{eq:AS1}
    (\mathcal{K} \varphi)(z) = \int_{X_\zeta} K(\zeta,z) \wedge \varphi(\zeta),
\end{equation}
where the kernel has the form
\begin{equation} \label{eq:Kdef0}
    K(\zeta,z) = \omega_X(\zeta) \wedge \tilde{K}(\zeta,z),
\end{equation}
\begin{equation*}
    d\zeta_1 \wedge \cdots \wedge d\zeta_{n+1} \wedge \tilde{K}(\zeta,z) = h \wedge (g\wedge B)_n,
\end{equation*}
and $(g\wedge B)_n$ denotes the components of $g\wedge B$ of bidegree
$(n,*)$, cf.\ \cite[Section~8]{AS}.
It follows that $K(\zeta,z)=\vartheta\lrcorner \big (h\wedge (g\wedge
B)_n\big )$ and so, in view of \eqref{eq:structure-form2},
\eqref{orm}, and \eqref{eq:hefer0} we get that
\begin{eqnarray*}
K(\zeta,z) &=& \vartheta\lrcorner \left(\big(df/2\pi i + O(|\eta|)\big)\wedge \sum_ic_i(\zeta,z)\frac{\bar{\eta}_i}{|\eta|^{2n}}\right) \\
&=&
\vartheta\lrcorner \left(df/2\pi i \wedge \sum_ic_i(\zeta,z)\frac{\bar{\eta}_i}{|\eta|^{2n}} +
d\zeta_1\wedge\cdots\wedge d\zeta_{n+1}\wedge \sum_{i,j}b_{ij}(\zeta,z)\frac{\bar{\eta}_i\eta_j}{|\eta|^{2n}} \right) \\
&=&
\sum_{i,j,k} a_{ijk}(\zeta,z)\frac{\bar{\eta}_i}{|\eta|^{2n}}\frac{f'_j\overline{f'_k}}{|\partial f(\zeta)|^2} +
\omega_X(\zeta)\wedge \sum_{i,j} b_{ij}(\zeta,z)\frac{\bar{\eta}_i\eta_j}{|\eta|^{2n}},
\end{eqnarray*}
where the $c_{i}$ and the $a_{ijk}$ are smooth $(n,*)$-forms and the $b_{ij}$ are smooth $(0,*)$-forms.
We have thus shown

\begin{prop}\label{prop:Kestimate}
We can write $\mathcal{K} = \mathcal{K}_1 + \mathcal{K}_2$, where
$\mathcal{K}_1$ and $\mathcal{K}_2$ are defined by integral kernels
$k_1$ and $k_2$, respectively, that are sums of terms of the form
    \begin{equation}\label{k1}
        a(\zeta,z)\frac{\bar{\eta}_i}{|\eta|^{2n}}\frac{f'_j(\zeta)\overline{f'_k}(\zeta)}{|\partial f(\zeta)|^2},
\end{equation}
and
\begin{equation}\label{k2}
            b(\zeta,z)\wedge \omega_X(\zeta) \frac{\bar \eta_i \eta_j}{|\eta|^{2n}},
    \end{equation}
respectively,
 where $a(\zeta,z)$ and $b(\zeta,z)$ are smooth on $X \times D$.
\end{prop}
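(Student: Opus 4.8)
The plan is to unwind the definition of the kernel $K(\zeta,z)$ step by step, tracking the degrees and the powers of $\eta$ and $\bar\eta$ that appear, and then to group the resulting terms into the two advertised shapes. First I would recall that $K = \omega_X \wedge \tilde K$, where $\tilde K$ is determined by $d\zeta_1\wedge\cdots\wedge d\zeta_{n+1}\wedge \tilde K = h\wedge (g\wedge B)_n$, and that equivalently $K = \vartheta \lrcorner\bigl(h\wedge (g\wedge B)_n\bigr)$, using \eqref{eq:structure-form2} and the fact that contracting $d\zeta_1\wedge\cdots\wedge d\zeta_{n+1}$ by $\vartheta$ reproduces $\omega_X$. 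This reduces everything to understanding $h\wedge (g\wedge B)_n$.

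Next I would substitute the expansions available: by \eqref{eq:hefer0}, $h = df/2\pi i + O(|\eta|)$ with the error a holomorphic $1$-form whose coefficients lie in the ideal $(\eta_1,\ldots,\eta_{n+1})$; and since $g$ is smooth with $g_{0,0}(z,z)=1$, the bidegree-$(n,*)$ component $(g\wedge B)_n$ is a smooth $(n,*)$-form-valued combination of the $B_k$'s, and by \eqref{orm} each $B_k$ is $\bar\eta\cdot d\zeta \wedge (d\bar\eta\wedge d\zeta)^{k-1}/|\eta|^{2k}$. After collecting, $(g\wedge B)_n$ has the schematic form $\sum_i c_i(\zeta,z)\,\bar\eta_i/|\eta|^{2n}$, with $c_i$ smooth $(n,*)$-forms on $X\times D$ (the normalization to the single power $|\eta|^{2n}$ is harmless since lower-order terms are even more integrable, or one absorbs them into the smooth coefficients — I would just note this rather than belabor it). Plugging the two expansions into $h\wedge(g\wedge B)_n$ gives two kinds of contributions: one where $h$ contributes $df/2\pi i$, and one where $h$ contributes the $O(|\eta|)$ error, which carries an extra $\eta_j$.

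Then I would apply $\vartheta\lrcorner(\cdot)$. On the $df$-term, $\vartheta\lrcorner(df\wedge \cdots)$ produces, via \eqref{eq:structure-form2}, a factor $f'_j\overline{f'_k}/|\partial f|^2$ together with a smooth $(n,*)$-form, yielding terms of the shape \eqref{k1}; the remaining $\bar\eta_i/|\eta|^{2n}$ is untouched since $\vartheta$ only has holomorphic derivative components. On the $O(|\eta|)$-term, the extra $\eta_j$ means that after contraction we can write the term as $d\zeta_1\wedge\cdots\wedge d\zeta_{n+1}$ (contracted by $\vartheta$, hence $\omega_X$) wedged with a smooth $(0,*)$-form times $\bar\eta_i\eta_j/|\eta|^{2n}$; these are exactly the terms \eqref{k2}. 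Defining $\mathcal K_1$ to be the part coming from the first group and $\mathcal K_2$ from the second finishes the proof, and \eqref{bas} or \eqref{eq:AS1} guarantees $\mathcal K = \mathcal K_1 + \mathcal K_2$ as operators.

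The main obstacle, such as it is, is purely bookkeeping: making sure the bidegree count works out so that exactly the component $(g\wedge B)_n$ of bidegree $(n,*)$ in $\zeta$ survives after wedging with $\omega_X$ (an $(n,0)$-form), so that $K$ has the right total bidegree to act $(0,\rr)\mapsto(0,\rr-1)$, and keeping straight which differentials are holomorphic vs.\ antiholomorphic in $\zeta$ so that the contraction by $\vartheta$ lands correctly. None of this is deep, but it must be done carefully; the essential computation has in fact already been carried out in the display preceding the statement, so the proof is really just the observation ``we have thus shown'' together with the identifications \eqref{k1}=first line, \eqref{k2}=second line.
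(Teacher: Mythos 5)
Your proposal is correct and follows essentially the same route as the paper: the paper's proof is precisely the displayed computation preceding the statement, namely writing $K=\vartheta\lrcorner\bigl(h\wedge(g\wedge B)_n\bigr)$, expanding $h=df/2\pi i+O(|\eta|)$ via \eqref{eq:hefer0} and $(g\wedge B)_n=\sum_i c_i\bar\eta_i/|\eta|^{2n}$ via \eqref{orm}, and contracting with $\vartheta$ to obtain the two families of terms \eqref{k1} and \eqref{k2}. Your side remarks (absorbing the lower-order Bochner--Martinelli terms into the smooth coefficients, and the bidegree bookkeeping) are exactly the points the paper leaves implicit.
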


We also need to consider the projection operator $\mathcal{P}$,
which is defined by
\begin{equation}\label{eq:AS2}
    (\mathcal{P} \varphi)(z) = \int_{X_\zeta} P(\zeta,z) \wedge \varphi(\zeta),
\end{equation}
where the integral kernel $P(\zeta,z)$ is defined in a similar way to \eqref{eq:Kdef0}.
Namely,
\begin{equation*}
    P(\zeta,z) = \omega_X(\zeta) \wedge \tilde{P}(\zeta,z),
\end{equation*}
where
\begin{equation*}
    \tilde{P}(\zeta,z) \wedge d\zeta_1 \wedge \dots \wedge d\zeta_{n+1} = h \wedge g_{n,n},
\end{equation*}
cf.\ \cite[(5.5)]{AS}.
Notice that since $h$ and $g$ are smooth, $\tilde{P}$ is smooth, and so $|P(\zeta,z)|\lesssim |\omega_X(\zeta)|$.
If $X$ has an isolated singularity in $\Omega$ and we choose $g$ according to Example~\ref{viktex}, then
for each $z$, $\zeta\mapsto g_{n,n}(\zeta,z)$ is supported away from $X_{sing}$ and the corresponding $P$ is
then smooth in $\zeta$ and holomorphic in $z$.

\subsection{$L^{2+}$-property of the structure form for a canonical hypersurface}

\begin{prop}\label{prop:canonical}
Let $i\colon Y \to \Omega\subset\C^{n+1}$ be a hypersurface with canonical singularities and $X\subset\subset Y$.
Then there exists a real number $q(X)>2$ such that
$\omega_Y \in L^q(X)$
for $1\leq q < q(X)$, where $\omega_Y$ is the structure form of $Y$.
\end{prop}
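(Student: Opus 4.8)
The plan is to reduce the global $L^q$-integrability of $\omega_Y$ near a canonical (du~Val) singularity to a local computation at each singular point, and to carry out that local computation by pulling back via a resolution. Away from $\Sing Y$ the form $\omega_Y$ is a holomorphic $(n,0)$-form, hence locally bounded, so it is automatically in $L^q$ on any relatively compact piece of $Y_{reg}$. Thus it suffices to treat a small neighborhood $U$ of a fixed singular point $p\in \Sing Y$; since $X\subset\subset Y$ meets $\Sing Y$ in finitely many points, one then takes $q(X)$ to be the minimum of the finitely many local exponents. So I would fix $p$, and using the explicit local description of the du~Val singularities as isolated hypersurface singularities in $\C^3$ (types $A_n,D_n,E_6,E_7,E_8$), reduce to estimating $\int_U |\omega_Y|^q\,dV_Y$ for each of these normal forms, where in dimension $n=2$ the form $\omega_Y$ is, up to a nonvanishing factor, $d\zeta_2\wedge d\zeta_3/(\partial f/\partial\zeta_1)$ by \eqref{eq:structure-form0}. (For general $n$ one argues the same way; but the paper only needs $n=2$, and I would phrase the local estimate so as to cover what is needed.)

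The key step is to push the estimate through a resolution of singularities. Let $\pi\colon \wt Y \to Y$ be a log resolution of the pair, with $\wt Y$ smooth and $E=\pi^{-1}(p)$ the exceptional divisor. By the \emph{defining property} of canonical singularities, $K_{\wt Y} = \pi^* K_Y + \sum a_i E_i$ with all discrepancies $a_i\ge 0$. Now $\omega_Y$, being the Poincar\'e residue, is a local generator of the canonical sheaf $\omega_Y = K_Y$ on $Y$ (this is exactly the content of the adjunction/residue formula, and is why $Y$ is ``canonical''); pulling back, $\pi^*\omega_Y$ is a holomorphic section of $K_{\wt Y}(-\sum a_i E_i)$, i.e.\ it vanishes to order $a_i\ge 0$ along each $E_i$. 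The point is then the change-of-variables inequality: for a volume form $dV_Y$ induced from the ambient metric and the smooth volume form $dV_{\wt Y}$ on $\wt Y$, one has $\pi^* dV_Y = |J_\pi|^2\, dV_{\wt Y}$ with $J_\pi$ the Jacobian, and
\[
\int_{U} |\omega_Y|^q\, dV_Y \;=\; \int_{\pi^{-1}(U)} |\pi^*\omega_Y|^q_{\pi^*(\text{metric})}\, |J_\pi|^2\, dV_{\wt Y}.
\]
Because $\pi^*\omega_Y$ is holomorphic on $\wt Y$ (discrepancies $\ge 0$!), the integrand on the right is, locally near a point of $E$, a bounded function times $|J_\pi|^{2-q}$ times powers of the local defining functions of the $E_i$ with \emph{nonnegative} exponents coming from the vanishing of $\pi^*\omega_Y$. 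Writing everything in local coordinates on $\wt Y$ adapted to the simple normal crossing divisor $E$, this becomes a product of one-variable integrals of the form $\int_0^\epsilon |t|^{2(b_i) + (2-q) c_i}\,|t|\,dt\cdots$ where $c_i\ge 0$ is the vanishing order of $J_\pi$ along $E_i$ and $b_i\ge 0$ that of $\pi^*\omega_Y$. For $q=2$ this converges (the exponents are $\ge 0$), so by monotone/dominated convergence it continues to converge for $q$ slightly larger than $2$; set $q(X)$ to be (the min over the finitely many singular points of) the supremum of such $q$.

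The main obstacle — or rather the point that needs care — is making the bookkeeping between $|\pi^*\omega_Y|$, the pulled-back ambient metric, and $|J_\pi|$ precise enough to see that all the relevant exponents are $\ge 0$ exactly when the singularity is canonical, and strictly controlled enough to get a \emph{strict} improvement past $q=2$ rather than merely $q=2$. Two subtleties: first, the metric on $Y$ is the restriction of the Euclidean metric, not an intrinsic K\"ahler metric, so one should either compare the two (they differ by a bounded factor near an isolated hypersurface singularity, since $Y$ is locally a graph-like branched cover) or argue directly with $|d\zeta_2\wedge d\zeta_3/f'_{\zeta_1}|$ using the explicit normal forms; second, one must observe that the \emph{strict} inequality $q(X)>2$ is where ``canonical'' is used in an essential way — for a merely log-canonical (discrepancies $\ge -1$) singularity one would only get $q=2$, matching the known fact that $\omega_Y\in L^2$ holds on canonical surfaces but can fail to improve beyond $L^2$ otherwise. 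An efficient alternative to the general resolution argument, which I would actually prefer for a clean self-contained proof, is to use the explicit quasi-homogeneous equations for $A_n,D_n,E_6,E_7,E_8$ and compute $\int |d\zeta_2\wedge d\zeta_3/(\partial f/\partial\zeta_1)|^q\,dV_Y$ directly via a weighted-blowup or the quasi-homogeneity, which makes the exponent $q(X)$ explicit for each type (and shows $2<q(X)\le 4$, consistent with Corollary~\ref{qXbound}); the resolution argument is then the conceptual explanation for why the answer is always $>2$.
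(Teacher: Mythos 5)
Your proposal is correct and takes essentially the same route as the paper: pull back to a resolution, use the canonical condition to conclude that $\pi^*\omega_Y$ extends holomorphically across the exceptional divisor, and absorb the resulting loss into a small negative power of the Jacobian of the parametrization. Two small points of imprecision, neither fatal: (a) the inference ``for $q=2$ this converges, so by monotone/dominated convergence it continues to converge for $q$ slightly larger than $2$'' is not valid as stated --- the correct reason is that your convergence condition is a strict inequality on exponents that depend affinely on $q$, hence open in $q$; the paper packages this as the local integrability of $(\det H)^{-\epsilon/2}$ for small $\epsilon>0$, where $\det H$ is the non-negative real-analytic density $\pi^*dV_Y/dV_s$. (b) Since $\pi$ composed with the inclusion maps an $n$-fold into $\C^{n+1}$, its ``Jacobian'' is not square; $|J_\pi|^2=\det({}^t\overline{\Jac\varpi}\cdot\Jac\varpi)$ is a sum of squared moduli of minors, so it is not literally a monomial in SNC coordinates and your exponent bookkeeping would need a further principalization (or, as in the paper, just the integrability of small negative powers of a sum of $|h_i|^2$ with $h_i$ holomorphic). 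Your suggested alternative of computing directly from the quasi-homogeneous normal forms is viable and is essentially what the subsequent lemma and Corollary~\ref{qXbound} do to bound $q(X)$ from above.
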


\begin{proof}

We denote by $\bomega_Y^n$ Grothendieck's dualizing sheaf (sometimes also called the sheaf of Barlet-Henkin-Passare holomorphic $n$-forms on $Y$).
As $Y$ is a hypersurface,
in particular Cohen-Macaulay,  $\bomega_Y^n$ is a locally free $\Ok_Y$-module
of rank one, and
the structure form $\omega_Y$ is a generator of $\bomega_Y^n$,
see, e.g.,  \cite{AS} and \eqref{eq:structure-form0}.

Let $\pi: M \rightarrow Y$ be a resolution of singularities
such that the exceptional divisor has only normal crossings.
Since $Y$ has canonical singularities, $\pi^*\omega_Y$ extends across $\pi^{-1}Y_{sing}$
to a holomorphic $n$-form on $M$. Pick any Hermitian metric on $M$ and let $dV_M$ be the corresponding
volume form. Then $i^{n^2}\pi^*\omega_Y\wedge\pi^*\overline{\omega}_Y=AdV_M$ for some smooth non-negative
function $A$ on $M$.

Let $s$ be local coordinates on $M$ and let $dV_s=(i/2)^n ds_1\w d\bar s_1\w\ldots\w ds_n\w d\bar s_n$.
Then $dV_s=BdV_M$ for some smooth positive function $B$.
Let $\varpi=i\circ \pi$, where $i$ is the inclusion $Y\hookrightarrow \Omega\subset \C^{n+1}$. Then, on $M\setminus\pi^{-1}Y_{sing}$,  $s\mapsto \varpi(s)$ is a local parametrization of $Y_{{reg}}\subset \Omega$
and it is well-known that $\pi^* dV_Y=\det H dV_s$,
where $H=\ ^t \overline{\Jac \varpi} \cdot \Jac \varpi \geq 0$
and
$\Jac \varpi = \big( \partial \varpi_\nu / \partial s_\mu\big)_{\nu,\mu}$ is
the Jacobian matrix of $\varpi$. 
Notice that $\det  H$ is a non-negative real-analytic function that vanishes precisely
on $\pi^{-1}Y_{sing}$. It follows that $(\det H)^{-\epsilon/2}$ is locally integrable with respect to $dV_M$
for some $\epsilon>0$. We now get
\begin{equation*}
\pi^*dV_Y=\det H dV_s=\det H B dV_M=:C dV_M.
\end{equation*}
Thus $C$ is a globally defined function and each point in $Y$ has a neighborhood where $C^{-\epsilon/2}$
is integrable for some $\epsilon>0$. Since $\pi^{-1}X\subset\subset M$, there is an $\epsilon(X)>0$ such that
$C^{-\epsilon/2}$ is integrable on $\pi^{-1}X$ for all $\epsilon<\epsilon(X)$.

Recall that $|\omega_Y|^2dV_Y=i^{n^2}\omega_Y\wedge\overline{\omega}_Y$. Pulling back to $M$ we get
$\pi^*|\omega_Y|^2C dV_M=A dV_M$ and thus $\pi^*|\omega_Y|^2=AC^{-1}$. Hence
\begin{equation} \label{omegaQ}
\int_X |\omega_Y|^q dV_Y=\int_{\pi^{-1}X} A^{q/2}C^{-q/2+1} dV_M <\infty
\end{equation}
as long as $q-2<\epsilon(X)$, and so we may take $q(X)=2+\epsilon(X)$.
\end{proof}

\begin{lem}
    Let $Y \subset \Omega \subset \C^3$ be a hypersurface with an isolated canonical singularity, and let $X$ and $q(X)$ be as in Proposition~\ref{prop:canonical}.
    Then $q(X) \leq 2+\frac{2}{m}$, where $m$ is the maximum of the multiplicities of the divisors in the unreduced exceptional divisor in a minimal resolution of singularities of $Y$.
\end{lem}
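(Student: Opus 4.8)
The plan is to make the upper bound $q(X)\le 2+2/m$ come out of the same integral computation \eqref{omegaQ} that produced the lower bound in Proposition~\ref{prop:canonical}, by analysing the behaviour of the integrand along the exceptional divisor. Recall from that proof that $\pi^*|\omega_Y|^2 = A C^{-1}$, where $A\ge 0$ is smooth with $i^{n^2}\pi^*\omega_Y\wedge\pi^*\overline\omega_Y = A\,dV_M$, and $C = \det H \cdot B$ with $\det H$ the (real-analytic, non-negative) ``volume distortion'' of the resolution $\varpi = i\circ\pi$. For $n=2$ we have $\int_X |\omega_Y|^q dV_Y = \int_{\pi^{-1}X} A^{q/2} C^{1-q/2}\,dV_M$, so finiteness for all such $q$ would force $C^{1-q/2}$ (times the nonvanishing-away-from-$E$ factor $A^{q/2}$, which does not help near generic points of $E$) to be locally integrable; the obstruction is precisely the zero locus of $C$, i.e.\ the exceptional divisor $E = \pi^{-1}Y_{sing}$.

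First I would work in local coordinates $s=(s_1,s_2)$ on $M$ near a generic point of an irreducible component $E_j$ of $E$, say $E_j = \{s_1 = 0\}$, chosen so that $A$ does not vanish identically there (this is where one uses that $Y$ has \emph{canonical}, not just log-canonical, singularities: $\pi^*\omega_Y$ is a genuine holomorphic form, so $A$ vanishes on $E$ only to the extent forced by the discrepancies, and in particular $A$ is generically nonzero on each $E_j$ — more carefully, one uses the standard description of $K_M = \pi^* K_Y + \sum a_j E_j$ with $a_j\ge 0$, together with the fact that the relevant multiplicity is that of the \emph{unreduced} exceptional divisor). Then $\det H$ vanishes along $E_j$ to some order $2\mu_j$ in $|s_1|$, and the claim will be that $\mu_j$ equals the multiplicity of $E_j$ in the unreduced exceptional divisor of the pullback — this is the content of comparing $\pi^* dV_Y = \det H\,dV_s$ with the divisorial description of $\pi^*$ applied to the generator $d\zeta_1\wedge d\zeta_2$ of (roughly) $\mathcal O$ on the smooth surface, i.e.\ $\det H \sim |s_1|^{2\mu_j}$ near a generic point of $E_j$. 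Here $m = \max_j \mu_j$.

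Next, near a generic point of the component $E_j$ realising the maximum $m$, the integrand behaves like $|s_1|^{-2m(q/2-1)} = |s_1|^{-m(q-2)}$ times a nonvanishing smooth factor (coming from $A^{q/2}B^{1-q/2}$), which is locally integrable in the single complex variable $s_1$ if and only if $m(q-2) < 2$, i.e.\ $q < 2 + 2/m$. Since $\omega_Y\in L^q(X)$ for all $q < q(X)$, we must have $q(X) \le 2 + 2/m$, which is the assertion. The main obstacle I expect is the bookkeeping in the previous paragraph: pinning down precisely that $\mu_j$ — the vanishing order of $\sqrt{\det H}$ along $E_j$ — coincides with the multiplicity of $E_j$ in the \emph{unreduced} exceptional divisor, and checking that $A$ (equivalently $\pi^*\omega_Y$) does not vanish along the maximal-multiplicity component to an order that would spoil the estimate; this requires invoking that for a canonical surface singularity the discrepancies $a_j$ are $\ge 0$ and relating them correctly to the exponents in $\det H$. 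The integrability step itself, and the passage from ``integrand $\sim |s_1|^{-m(q-2)}$ at a single point'' to ``integral over $\pi^{-1}X$ diverges'', are routine once a suitable local model is fixed (one only needs divergence, so it suffices to estimate from below on a polydisc around one generic point of the maximal component).
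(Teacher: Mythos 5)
Your overall strategy---localize the integral \eqref{omegaQ} near a generic point of an exceptional component and show divergence once $q \geq 2+2/m$---is the same as the paper's, and you correctly identify the two quantities that must be controlled: the positivity of $A$ and the vanishing order of $C$ along the exceptional divisor. On the first point you hedge unnecessarily: for a du~Val singularity the \emph{minimal} resolution is crepant (the discrepancies are all $0$, not merely $\geq 0$), so $\pi^*\omega_Y$ generates $\bomega_M^2$ and $A$ is strictly positive everywhere, which is exactly what the paper uses.

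The genuine gap is your central claim that $\det H$ vanishes to order $2\mu_j$ in $|s_1|$ along $E_j$ with $\mu_j$ equal to the multiplicity $m_j$ of $E_j$ in the unreduced exceptional divisor, justified by ``comparing $\pi^* dV_Y=\det H\,dV_s$ with the divisorial description of the pullback of $d\zeta_1\wedge d\zeta_2$.'' That comparison computes the relative canonical divisor, i.e.\ the discrepancies---which are all zero here---not the fundamental cycle, and as an equality the claim is false: writing $\varpi_\nu = s_1^{m_j}u_\nu$ near a generic point of $E_j$, every $2\times 2$ minor of $\Jac\varpi$ vanishes to holomorphic order $\geq 2m_j-1$, so $\det H \lesssim |s_1|^{4m_j-2}$, which beats $|s_1|^{2m_j}$ strictly when $m_j\geq 2$ (this is what happens along the central curve of $D_4$). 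For the divergence you only need the one-sided bound $C\lesssim |s_1|^{2m_j}$, and the paper reaches it by a chain you never invoke: $|\omega_Y|=c/|\partial f|$ on $Y_{reg}$ (from \eqref{eq:structure-form1}), $|\partial f|\lesssim |z|$ because the singularity is isolated, hence $|\omega_Y|^{q-2}\gtrsim |z|^{-(q-2)}$, while $A\gtrsim 1$ gives $\int_X|\omega_Y|^q\,dV_Y\gtrsim \int_{\pi^{-1}X}\pi^*|z|^{-(q-2)}\,dV_M$; finally $\pi^*z_k$ vanishes to order $\geq m_j$ along $Z_j$ by the very definition of the unreduced exceptional divisor, so the integrand is $\gtrsim |s_1|^{-m_j(q-2)}$ and the integral diverges once $m_j(q-2)\geq 2$. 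Without the identity $|\omega_Y|=c/|\partial f|$ (or the elementary minor estimate above in its place) your argument has no bridge from the analytic vanishing order of $\det H$ to the algebraic multiplicities $m_j$, so the decisive step is missing.
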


\begin{proof}
    Assume that $Y = \{ f = 0 \} \subset \Omega \subset \C^3$, and that $Y$ has an isolated singularity at $z = 0$. Then we claim that on $Y_{{reg}}$
    \begin{equation} \label{omegaNorm}
        |\omega_Y| = c \frac{1}{|\partial f|},
    \end{equation}
    for some constant $c$, where as above the norm $|\omega_Y|$ is with respect to the norm on $Y_{{reg}}$ induced by the norm on $\C^3$,
    while $|\partial f|$ is with respect to the norm on $\C^3$.
    Indeed, for any $(2,0)$-form $\alpha$ on $Y_{{reg}}$, one has the formula
    \begin{equation*}
        |\alpha|_{Y_{{reg}}} = \frac{|\alpha \wedge \partial f|_{\C^3}}{|\partial f|_{\C^3}},
    \end{equation*}
    and thus \eqref{omegaNorm} follows from \eqref{eq:structure-form1}.

    Let $A$ and $C$ be as in the proof of Proposition~\ref{prop:canonical}.
    Let $\pi : M \rightarrow Y$ be a minimal resolution of singularities of $Y$. This resolution is crepant, i.e.,
    $\pi^* \bomega^2_Y = \bomega^2_M$, see for example \cite[Theorem~7.5.1]{Ishii}. Thus, the function $A$
    is strictly positive.

    Since $Y$ has an isolated singularity at $0$, $|\partial f| \lesssim |z|$, so by \eqref{omegaNorm}, $|\omega_Y| \gtrsim 1/|z|$.
    Since $A$ is strictly positive, $\pi^* |\omega_Y| \sim C^{-1/2}$, and it thus follows from \eqref{omegaQ} that for $q \geq 2$,
    \begin{equation*}
        \int_X |\omega_Y|^q dV_Y \gtrsim \int_{\pi^{-1} X} \frac{1}{\pi^* |z|^{q-2}} dV_M.
    \end{equation*}
    If $Z_i$ is an irreducible component of the unreduced exceptional divisor $Z$, and $Z_i$ has multiplicity
    $m_i$, then $\pi^* |z|^2$ vanishes to order $2m_i$ along $Z_i$, and thus, in order for the integral on the right-hand side
    to be finite, we must have that $m_i(q-2) < 2$ for all $m_i$.
\end{proof}

In combination with a calculation of the multiplicities as in for example
\cite[Example~7.2.5]{Ishii} or \cite[Proposition~3.8]{BPV},
we obtain the following corollary.

\begin{cor} \label{qXbound}
    If $Y$ is a surface with an isolated $A_n$, $D_n$, $E_6$, $E_7$ or $E_8$-singularity, and $X \subset \subset Y$,
    then $q(X)$ is at most $4$, $3$, $2+2/3$, $2+1/2$ or $2+1/3$, respectively.
\end{cor}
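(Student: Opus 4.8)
The plan is to feed the bound from the preceding lemma through the ADE classification of du~Val singularities. That lemma gives $q(X)\le 2+2/m$, where $2m_i$ is the order to which $\pi^*|z|^2$ vanishes along the $i$-th component $Z_i$ of the exceptional divisor of a minimal resolution $\pi\colon M\to Y$, and $m=\max_i m_i$; so the whole task reduces to determining $m$ for each of the types $A_n$, $D_n$, $E_6$, $E_7$, $E_8$.

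First I would recall the standard description of the $m_i$. For a du~Val (more generally rational) surface singularity the inverse image ideal sheaf $\mathfrak m_0\cdot\Ok_M$ of the maximal ideal equals $\Ok_M(-Z)$, where $Z=\sum_i m_i Z_i$ is the fundamental (numerical) cycle, and $\Ok_M(-Z)$ is generated by its global sections; hence along each $Z_i$ every coordinate function $\pi^*z_k$ vanishes to order at least $m_i$, and at least one of them vanishes to order exactly $m_i$. Thus $\pi^*|z|^2=\sum_k|\pi^*z_k|^2$ vanishes along $Z_i$ to order exactly $2m_i$, i.e.\ the $m_i$ of the lemma are precisely the coefficients of the fundamental cycle. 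Now the classification enters: the dual graph of the minimal resolution is the Dynkin diagram of the corresponding type, all $Z_i$ are smooth rational $(-2)$-curves, and the coefficients of the fundamental cycle coincide with the marks of the associated affine Dynkin diagram, equivalently the coefficients of the highest root of the root system. These are computed explicitly in \cite[Example~7.2.5]{Ishii} and \cite[Proposition~3.8]{BPV}; the largest one is $1$ for $A_n$, $2$ for $D_n$ with $n\ge4$, $3$ for $E_6$, $4$ for $E_7$ and $6$ for $E_8$.

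Substituting $m=1,2,3,4,6$ into $q(X)\le 2+2/m$ then yields the claimed values $4$, $3$, $2+2/3$, $2+1/2$ and $2+1/3$. I expect the only genuinely delicate point to be the identification in the middle paragraph — namely that $\mathfrak m_0\cdot\Ok_M$ carries no divisorial part beyond $Z$, so that no coordinate vanishes along some $Z_i$ to order strictly larger than $m_i$; this is the content of the global generation of $\Ok_M(-Z)$ for rational surface singularities, and in any case can be verified directly from the explicit defining equations of the du~Val singularities in $\C^3$.
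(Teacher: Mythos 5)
Your proposal is correct and follows the paper's route exactly: the corollary is obtained by combining the bound $q(X)\le 2+2/m$ from the preceding lemma with the known maximal multiplicities $1,2,3,4,6$ of the fundamental cycle for the $A_n$, $D_n$, $E_6$, $E_7$, $E_8$ configurations, citing the same references. Your middle paragraph (global generation of $\Ok_M(-Z)$, identifying the vanishing order of $\pi^*|z|^2$ with twice the fundamental-cycle coefficient) is a welcome justification of a step that actually belongs to the lemma's proof rather than the corollary, but it does not change the argument.
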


In particular, we always have that $q(X) \leq 4$, so $p(X) \geq 4/3$ for all surfaces with canonical singularities.

\subsection{Mapping properties of $\K$}
\label{ssec:mapping-properties}
\begin{proof} [Proof of  the $L^p$ mapping properties in Theorem~\ref{thm:main1}]
By Proposition ~\ref{prop:Kestimate}  we have the decomposition
$K(\zeta,z) = k_1(\zeta,z) + k_2(\zeta,z)$,
where
$$
|k_1(\zeta,z)| \lesssim \frac{1}{|\zeta-z|^3}, \quad\quad
k_2(\zeta,z)  = \omega_X(\zeta) \wedge \frac{b'(\zeta,z)}{|\zeta-z|^2},
$$
where $b'(\zeta,z)$ is bounded.
By Lemma~\ref{lem:estimate3},
$k_1$ is uniformly integrable over $X$ in $\zeta$ as well as in $z$,
and so $\K_1$ 
maps  $L^p(X) \to L^p(D)$ continuously
for all $1\leq p\leq \infty$ by the generalized Young inequality, \cite[Appendix B]{Ra}
and Lemma~\ref{lma:Lpformsfunctions}.

Note that we can then decompose $\K_2$ into the
consecutive application of two operators
\begin{eqnarray}\label{eq:decomp}
\varphi(\zeta) &\mapsto& \varphi(\zeta)\wedge \omega_X(\zeta)
\ \ \mapsto \ \ \int_{X} \varphi(\zeta) \wedge \omega_X(\zeta) \wedge
                         \frac{b'(\zeta, z)}{|\zeta-z|^2}.
\end{eqnarray}
To analyse this chain, choose $2<q<q(X)$ so that $\omega_X \in L^q(X)$.
By H\"older's inequality,
the operator
$\varphi \mapsto \varphi\wedge \omega_X$
maps
$L^p(X) \to L^a(X)$ continuously
for $1\leq a \leq \infty$ defined by $1/a=1/p+1/q$ (for $p$ so that $1/p+1/q \leq 1$).

The second operator in \eqref{eq:decomp}
can again be analysed by the generalised Young inequality. 
By Lemma~\ref{lem:estimate3}, $|\zeta-z|^{-2}\in L^s(X)$ in $\zeta$ and in $z$
for all $s<2$, in particular for $s$ defined by $1/s + 1/q =1$, since $q>2$.
Then, since $1/p=1/a-1/q=1/a+1/s-1$, it follows from the generalised Young inequality, \cite[Appendix B]{Ra},
that \[
\varphi\mapsto\int_X\varphi(\zeta) \wedge \frac{b'(\zeta, z)}{|\zeta-z|^2}\]
 maps $L^a(X)\to L^p(D)$ continuously.
Combining, we see that the composed operator \eqref{eq:decomp}
given by the kernel $k_2$ is a bounded mapping  $L^p(X)\to L^p(D)$
for any $p$ such that $1/p+1/q\leq 1$. Thus $\K$ is a bounded mapping
$L^p(X)\to L^p(D)$ for all $p(X)<p\leq \infty$.

The kernel $k_1$ is integrable in both variables, and by truncating it,
we get a bounded kernel corresponding to a compact operator; by standard arguments, cf., for example
\cite[Appendix C]{Ra}, this
converges to $\K_1$, and it is thus a compact operator.
If we decompose the operator $\K_2$ as in \eqref{eq:decomp}, the same holds for the right-most operator,
and thus also $\K_2$ is compact.
\end{proof}


\begin{proof} [Proof of the $C^\alpha$ mapping properties in Theorem~\ref{thm:main1}]
    Let us first consider the operator $\mathcal{K}$.
Note that for $\nu=1,2$, $k_\nu(\zeta, z)\varphi (\zeta)$ is a sum of
terms of the form $k_\nu'(\zeta, z)\varphi'(\zeta) d\zeta_I\wedge d \bar \zeta_J \wedge d
\bar z_K$
and $\K_\nu \varphi(z)$ is a sum of terms $(\K_\nu \varphi)'(z)d\bar
z_K:=\int k_\nu'(\zeta, z)\varphi'(\zeta) d\zeta_I\wedge d \bar \zeta_J \wedge d
\bar z_K$, where $k_\nu'(\zeta, z), \varphi'(\zeta)$, and $(\K_\nu
\varphi)'(z)$ are functions.
Using that
    \begin{equation*}
        |(\mathcal{K}_\nu)'\varphi(z) - (\mathcal{K}_\nu)'\varphi(w)| \lesssim \|\varphi'\|_{L^\infty} \int |k'_\nu(\zeta,z) - k'_\nu(\zeta,w)|,
    \end{equation*}
it follows that $\K_\nu$ maps into $C^\alpha$ if
    \begin{equation}\label{eq:hoelder-estimate}
        \int |k'_\nu(\zeta,z) - k'_\nu(\zeta,w)| \lesssim |z-w|^\alpha.
    \end{equation}
for each $k_\nu'$.
%
%

    For $\nu=1$, we may assume that $k_1$ is of the form
    \eqref{k1}. Then $k_1'$ is a sum of functions of the form \eqref{k1} with
    $a(\zeta, z)$ replaced by one of its coefficients $a'(\zeta, z)$. We may assume
    that $k_1'$ is one such function; then
    \begin{align*}
        &\int |k'_1(\zeta,z) - k'_1(\zeta,w)| \lesssim
        \int |a'(\zeta,z) - a'(\zeta,w)| \left|\frac{\overline{\zeta_i - z_i}}{|\zeta-z|^4}
        \frac{f'_j(\zeta)\overline{f'_k}(\zeta)}{|\partial f(\zeta)|^2} \right| + \\
        + & \int |a'(\zeta,w)| \left|\left(\frac{\overline{\zeta_i - z_i}}{|\zeta-z|^4} - \frac{\overline{\zeta_i - w_i}}{|\zeta-w|^4}\right)
        \frac{f'_j(\zeta)\overline{f'_k}(\zeta)}{|\partial f(\zeta)|^2} \right| =:
        I_1(z,w) + I_2(z,w),
    \end{align*}
    Since $a(\zeta,z)$ depends smoothly on $z$, we may assume that
    $|a'(\zeta,z) - a'(\zeta,w)| \lesssim
    |z-w|$, and since the remaining integrand
    in $I_1(z,w)$ is integrable in $\zeta$ by Lemma~\ref{lem:estimate3}, $I_1(z,w)
\lesssim |z-w|$. The integrand in $I_2(z,w)$ is bounded
    by a constant times
    \begin{equation*}
        \left|\frac{\overline{\zeta_i - z_i}}{|\zeta-z|^4} - \frac{\overline{\zeta_i - w_i}}{|\zeta-w|^4}\right|,
    \end{equation*}
    and by the same argument as for the Bochner-Martinelli kernel on
    $\C^2$, see, e.g.,  \cite[Proposition~III.2.1]{LT},
    and using Lemma~\ref{lem:estimate3}, one obtains that $I_2(z,w)
    \lesssim |z-w|^\alpha$ for any $\alpha < 1$, and thus $\K_1$ is
    $C^\alpha$ for any $\alpha<1$.

    We next consider $k_2$. As above it is enough to consider one of
    the coefficients \linebreak $ b'(\zeta,z)\omega'_X(\zeta) \bar \eta_i
    \eta_j/|\eta|^4$ of one of the terms \eqref{k2}. In view of
    \eqref{eq:minrep} we can
    choose the coefficient $\omega'_X$ of $\omega_X$ in $L^q$ for $1\leq q<q(X)$. We divide the domain of integration $X$ into
    \begin{equation*}
        D_1 := X\cap B_{|z-w|/2}(z), \,\, D_2 := X\cap B_{|z-w|/2}(w),
        \,\,\text{ and } \,\, D_3 := X \setminus (D_1 \cup D_2),
    \end{equation*}
where $B_r(z)$ denotes a ball of radius $r$ centered at $z$.
    We choose  $2<q<q(X)$ and let $p = q/(q-1) < 2$ be the dual
    exponent. 
    Since $q(X) \leq 4$ by Corollary~\ref{qXbound}, $p > 4/3$.
    Using H\"older's inequality and Lemma~\ref{lem:estimate8}, we get
    \begin{equation*}
        \int_{\zeta\in D_\nu} |k_2'(\zeta,z)| \lesssim \left(\int_{\zeta\in D_\nu} \frac{1}{|\zeta-z|^{2p}}\right)^{1/p}
         \lesssim (|z-w|^{4-2p})^{1/p} = |z-w|^{4/p-2} 
    \end{equation*}
for $\nu=1,2$. By the same argument
    \begin{equation*}
        \int_{\zeta\in D_\nu} |k_2'(\zeta,w)| \lesssim |z-w|^{4/p-2}. 
    \end{equation*}
    For the integral on $D_3$, we use the following inequality,
    \begin{equation*}
        \left|\frac{\overline{\zeta_i - z_i}}{|\zeta-z|^4} - \frac{\overline{\zeta_i - w_i}}{|\zeta-w|^4}\right| \lesssim
        |z-w| \max\left\{\frac{1}{|\zeta-z|^4}, \frac{1}{|\zeta-w|^4} \right\},
    \end{equation*}
    see the proof of \cite[Lemma~III.2.2]{LT}. It follows that
    \begin{equation} \label{eq:star0}
        \left|\frac{\overline{(\zeta_i - z_i)}(\zeta_j - z_j)}{|\zeta-z|^4} - \frac{\overline{(\zeta_i - w_i)}(\zeta_j - w_j)}{|\zeta-w|^4} \right| \lesssim
        |z-w| \max\left\{\frac{1}{|\zeta-z|^3}, \frac{1}{|\zeta-w|^3} \right\},
    \end{equation}
    e.g., by assuming that $|\zeta-z|\leq |\zeta-w|$ and adding
and subtracting $(\overline{\zeta_i-w_i})(\zeta_j-z_j)/|\zeta-w|^{4}$ inside the absolute value sign on the left-hand side.
    Using H\"older's inequality as above, we get
    \begin{equation*}
        \int_{\zeta\in D_3} |k_2'(\zeta,z) - k_2'(\zeta,w)| \lesssim \left(\int_{\zeta\in D_3} \left|\frac{\overline{(\zeta_i - z_i)}(\zeta_j - z_j)}{|\zeta-z|^4} - \frac{\overline{(\zeta_i - w_i)}(\zeta_j - w_j)}{|\zeta-w|^4} \right|^p \right)^{1/p}.
    \end{equation*}
    By \eqref{eq:star0}, this is bounded by
    \begin{equation*}
        |z-w| \left(\int_{\zeta\in D_3}
          \max\left\{\frac{1}{|\zeta-z|^{3p}},
            \frac{1}{|\zeta-w|^{3p}} \right\} \right)^{1/p}.
    \end{equation*}
    Since $p > 4/3$, it follows from Lemma~\ref{lem:estimate2} that this is bounded by a constant times
\begin{equation*}
  |z-w| \big (|z-w|^{4-3p}\big )^{1/p} =
  |z-w|^{4/p-2}.
\end{equation*}

Since $p > 4/3$, we get that $4/p-2 < 1$.
Thus, it follows that $\mathcal{K}_2$ is $C^\alpha$ for any $\alpha <4/p-2$. We conclude that $\mathcal{K}$ is $C^\alpha$ for any $\alpha<4/p(X)-2$.

    \smallskip
    Since \eqref{eq:hoelder-estimate} holds uniformly for $z,w\in D$,
    if $\{\varphi_j\}_j$ and thus $\{\varphi_j'\}_j$ are bounded sequences in $L^\infty(X)$, then $\{(\mathcal{K} \varphi_j)'\}_j$
    are equicontinuous in the $C^\alpha(\overline{D})$-norm and thus $\mathcal{K}$ is compact by the Arzel\`a-Ascoli theorem.
\end{proof}



\begin{proof}[Proof of Corollary~\ref{cor:asheaves}]
The stalk of $\A_X$ at the singular point is a finite sum of currents of the form \begin{equation*}
        \xi_{\nu+1}\wedge (\mathcal{K}_\nu(\ldots \xi_3 \wedge \mathcal{K}_2(\xi_2\wedge \mathcal{K}_1\xi_1))),
    \end{equation*}
    where each $\mathcal{K}_i$ is an integral operator as in Theorem ~\ref{thm:main1},
    mapping forms on $D_i := \Omega_i \cap X$ to forms on $D_{i+1}$,
    where $\Omega = \Omega_{\nu+1} \subset\subset \Omega_\nu \subset\subset \dots \subset\subset \Omega_1 \subset\subset \C^3$
    are pseudoconvex domains, and $\xi_i$ are smooth forms on $D_i$.
The corollary now follows from Theorem~\ref{thm:main1}.
\end{proof}

\subsection{The operators $\hat{\K}$ and $\hat{\Pj}$ on forms with compact support} \label{ssec:KPcompact}

 Let $H\subset X$ be a compact Stein subset such that
 $D$ is relatively compact in the interior of $H$.
In \cite{AS1} are constructed integral operators, that we here denote by $\hat\K$ and $\hat\Pj$,
which map smooth forms with compact support in $D$ to smooth forms in $X\setminus\{0\}$
that vanish outside $H$, such that
\begin{equation}\label{dansa}
     \varphi(z) = \dq \hat\K \varphi(z) + \hat\K \dq \varphi(z) \
\text{ if } \rr =0,1,   \quad         \varphi(z) = \hat\Pj
\varphi(z) + \hat\K \dq \varphi(z)  \ \text{ if } \rr=2.
\end{equation}
In fact, $\hat\Pj$ maps forms with support in $D$ to smooth forms. Moreover,
$\hat\Pj\varphi=0$ unless $\rr=2$.
The kernels for these operators are obtained
 by choosing the weight $g$ differently; with notation as in Example~\ref{viktex},
we let $\chi=\chi(z)$ and we interchange the roles of $\zeta$ and $z$ in the functions $s_i(\zeta,z)$.
The resulting weight is then holomorphic in $\zeta$ and has compact support $H$  in $z$.

Since the proof of the mapping properties above
essentially only uses that
$g$ is smooth, it follows that an analogue of Theorem~\ref{thm:main1} holds also for these
operators.
The subscript $c$ denotes forms with compact support.

\begin{thm}\label{kompakt1}
In the situation of Theorem \ref{thm:main1},
the integral operator $\hat \K$ extends to an operator
$$
L^p_{0,\rr;c}(D)\to L^p_{0,\rr-1;c}(X),\ \   p(X) < p \leq \infty,  \ \ L^\infty_{0,\rr;c}(D)  \to C^\alpha_{0,\rr-1;c}(X), \ \ 0\leq \alpha<4/p(X)-2,
$$
and $\hat\Pj$ extends to an operator
$
L^p_{0,2;c}(D)\to C^{\infty}_{0,2;c}(X),\ \   p(X) < p \leq \infty.
$
\end{thm}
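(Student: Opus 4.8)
The plan is to reduce Theorem~\ref{kompakt1} to Theorem~\ref{thm:main1} by observing that the kernels of $\hat\K$ and $\hat\Pj$ have exactly the same structure as those of $\K$ and $\Pj$, the only difference being the choice of weight $g$. Concretely, for $\hat\K$ one repeats the computation preceding Proposition~\ref{prop:Kestimate}: since $\hat g$ is a smooth weight (now holomorphic in $\zeta$ with compact support in $z$ instead of the other way around), the same expansion using \eqref{eq:structure-form2}, \eqref{orm} and \eqref{eq:hefer0} shows that the kernel $\hat K(\zeta,z)$ is a sum of terms of the two forms \eqref{k1} and \eqref{k2}, with $a(\zeta,z)$ and $b(\zeta,z)$ smooth on $X\times D$ (indeed on a neighborhood, since the supports are compact). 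Thus the exact analogue of Proposition~\ref{prop:Kestimate} holds for $\hat\K$, and the $L^p$- and $C^\alpha$-mapping-property proofs in Section~\ref{ssec:mapping-properties} apply verbatim, giving boundedness and compactness $L^p_{0,\rr;c}(D)\to L^p_{0,\rr-1;c}(X)$ for $p(X)<p\le\infty$ and $L^\infty_{0,\rr;c}(D)\to C^\alpha_{0,\rr-1;c}(X)$ for $0\le\alpha<4/p(X)-2$. The compact-support statements on the target side are automatic since by construction $\hat\K\varphi$ and $\hat\Pj\varphi$ vanish outside the fixed compact set $H$.

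For $\hat\Pj$ the argument is even simpler and mirrors the discussion of $\Pj$ after \eqref{eq:AS2}: the kernel is $\hat P(\zeta,z)=\omega_X(\zeta)\wedge\tilde{\hat P}(\zeta,z)$ with $\tilde{\hat P}$ smooth, so $|\hat P(\zeta,z)|\lesssim|\omega_X(\zeta)|$. Since $\omega_X\in L^q(X)$ for some $q>2$ by Proposition~\ref{prop:canonical} and Corollary~\ref{qXbound}, and since for fixed $\zeta$ the function $z\mapsto \tilde{\hat P}(\zeta,z)$ together with all its derivatives is smooth and compactly supported in $z$, one can differentiate under the integral sign arbitrarily many times; Hölder's inequality then bounds each such derivative by $\|\varphi\|_{L^p}$ for every $p$ with $1/p+1/q\le 1$, i.e.\ $p(X)<p\le\infty$. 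Hence $\hat\Pj$ maps $L^p_{0,2;c}(D)$ into $C^\infty_{0,2;c}(X)$ for $p(X)<p\le\infty$, and compactness follows by the same truncation argument as in Theorem~\ref{thm:main1} (or directly, for $\hat\Pj$, from Arzel\`a--Ascoli applied to the uniformly bounded derivatives).

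I do not expect any genuine obstacle here: the content is entirely in checking that ``$g$ smooth'' is the only property of the weight used in the proof of Theorem~\ref{thm:main1}, which the text has already flagged. The one point deserving a sentence of care is that when the roles of $\zeta$ and $z$ are interchanged in the section $s_i(\zeta,z)$ of Example~\ref{viktex}, the resulting $\hat g$ is holomorphic in $\zeta$ and smooth (with compact support $H$) in $z$; the Hefer form $h$ and the Bochner--Martinelli form $B$ are unchanged, so the structure-form factor $\omega_X(\zeta)$ still appears exactly as before and the decomposition into \eqref{k1}--\eqref{k2} is unaffected. With that remark in place, the proof is just: ``the kernels of $\hat\K,\hat\Pj$ satisfy the hypotheses of Proposition~\ref{prop:Kestimate} (resp.\ the estimate $|\hat P|\lesssim|\omega_X|$ with smooth compactly supported $z$-dependence), so the proofs of Theorem~\ref{thm:main1} apply, and the compact-support conclusion is built into the construction in \cite{AS1}.''
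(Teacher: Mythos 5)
Your proposal is correct and follows exactly the paper's route: the paper disposes of Theorem~\ref{kompakt1} with the single observation that the proof of Theorem~\ref{thm:main1} only uses smoothness of the weight $g$, so the decomposition of Proposition~\ref{prop:Kestimate} and the subsequent $L^p$/$C^\alpha$ estimates carry over to $\hat\K$, while the compact support of the images is built into the construction from \cite{AS1}. Your extra remarks (symmetry of the kernel estimates in $\zeta$ and $z$, the H\"older/differentiation-under-the-integral argument for $\hat\Pj$) are accurate elaborations of what the paper leaves implicit.
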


Note that the operators in fact map to forms with support in the fixed compact set $H$.

\section{Homotopy formulas}\label{sec:homotopy-formulas}
\begin{proof}[Proof of Theorem~\ref{thm:main2}]
By  \cite[Theorem~1.1]{AS1} the homotopy formula \eqref{koppla} holds
pointwise on $D_{reg}$ if $\varphi$ is smooth on $X$.
For $\varphi \in \Dom \dq_s^{(p)}$, let $\{\varphi_j\}_j$ be a sequence as in
Lemma~\ref{lem:cut-off2}.
We can assume that the $\varphi_j$ are smooth and bounded and with support away from the singularity $\{0\}$
(see the proof of Lemma ~\ref{lem:cut-off2} in \cite{LR2}).  Then the homotopy formula
\begin{equation}\label{eq:homotopy2}
      \varphi_j = \dq \mathcal{K} \varphi_j + \mathcal{K} \dq \varphi_j
\end{equation}
holds on $D$. In fact, since $\varphi_j$ is supported away from $X_{sing}$ all the terms are smooth on $D$,
see \cite[Lemma~6.1]{AS}.
By Theorem~\ref{thm:main1} 
we have that $\mathcal{K} \varphi_j \to \mathcal{K}\varphi$,
and $\mathcal{K}\dq \varphi_j \to \mathcal{K} \dq \varphi$ in $L^p(D)$.
It only remains to show that $\mathcal{K}\varphi \in \Dom\dq_s^{(p)}$.
Taking the limit $j\to \infty$ in \eqref{eq:homotopy2}
implies, by Theorem ~\ref{thm:main1},  that $\mathcal{K}\varphi\in \Dom \dq_w^{(p)}$ and
$
\dq \mathcal{K} \varphi = \varphi - \mathcal{K} \dq \varphi$
on $D$. As the $\varphi_j$ are bounded, $\{ \mathcal{K} \varphi_j\}_j$ is
by Theorem ~\ref{thm:main1} a sequence of bounded forms
such that  $\mathcal{K} \varphi_j \to \mathcal{K} \varphi$ and $\dq \mathcal{K} \varphi_j \to \dq \mathcal{K}\varphi$ in $L^p(D)$.
Hence, $\mathcal{K}\varphi \in \Dom \dq_s^{(p)}$ by Lemma ~\ref{lem:cut-off2}.
\end{proof}

\subsection{Proof of Theorem~\ref{thm:main3}} \label{ssec:star}

We first remark that the condition $(*)$ from the introduction is indeed independent of the sequence of cut-off functions $\{ \chi_k\}_k$.
Indeed, if $\{\chi_k'\}_k$ is another such sequence,
then $\chi_k-\chi_k'$ has compact support contained in $X^*$, and on this set, $\omega$ is $\dbar$-closed.
Thus,
\begin{equation*}
    \int_X \omega_X \wedge \dbar \chi_k \wedge \varphi - \int_X \omega_X \wedge \dbar \chi_k' \wedge \varphi
    = \int_X \omega_X \wedge (\chi_k - \chi_k') \wedge \dbar \varphi,
\end{equation*}
and this tends to $0$ as $k \to \infty$ by dominated convergence since $\omega_X \wedge \dbar \varphi$ is in $L^1(X)$.

\begin{proof}[Proof of Theorem \ref{thm:main3}]
We first note that it is enough to prove that
\begin{equation} \label{hoppla2}
    \varphi = \dbar \K \varphi + \K \dbar \varphi
\end{equation}
holds in the sense of distributions. Indeed, if it holds, then $\dbar \mathcal K \varphi=\mathcal K \dbar \varphi-\varphi$
is in $L^p(D)$ by Theorem \ref{thm:main1} since $\dbar \varphi\in L^p(X)$, and therefore
$\mathcal K\varphi\in \Dom\dq_w^{(p)}$.
We note that $p > p(X) \geq 4/3$ since $q(X) \leq 4$ by Corollary~\ref{qXbound}.

Let $\mu_k$ be the cut-off functions in Section ~\ref{ssec:cut-off}
and let $\varphi_k=\mu_k\varphi$. By the proof of Lemma~\ref{lem:cut-off} in \cite{LR2},
$\varphi_k \to \varphi$ in $L^p(X)$, $\mu_k \dbar \varphi \to \dbar \varphi$ in $L^p(X)$,
and $\dbar \mu_k \wedge \varphi \to 0$ in $L^{\lambda}(X)$ for $\lambda = 4p/(p+4) > 1$ since $p > 4/3$.
Since $\varphi_k$ has support away from the singularity, it follows as in the proof of Theorem~\ref{thm:main2}
that the homotopy formula \eqref{eq:homotopy2} holds on $D$.
Since $p>p(x)$, $\mathcal K\varphi_k$ converges to $\mathcal K\varphi$ in $L^p(D)$ by Theorem~\ref{thm:main1},
and it follows that $\dbar\mathcal K\varphi_k$ converges weakly to $\dbar \mathcal K\varphi$.
Since $\varphi_k = \mu_k \varphi \to \varphi$ and $\mu_k \dbar \varphi \to \dbar \varphi$ in $L^p(X)$, it follows
from Theorem ~\ref{thm:main1} that $\mathcal{K}(\varphi_k) \to \mathcal{K} \varphi$ and $\mathcal{K}(\mu_k \dbar \varphi) \to \mathcal{K} \dbar \varphi$ in $L^p(D)$.
Thus, using that $\dbar \varphi_k = \dbar \mu_k \wedge \varphi + \mu_k \dbar \varphi$, it follows that \eqref{hoppla2} holds if and only if
\begin{equation}\label{sjunga}
    \mathcal{K} ( \dbar \mu_k \wedge \varphi ) \to 0
\end{equation}
in the sense of distributions. If $\varphi$ is a $(0,\rr)$-form, then there is nothing to prove for $\rr=2$,
so let us assume that $\rr=1$.

We first consider the case when $p > \hat p(X)$. Then $\lambda > p(X)$
so \eqref{sjunga} holds by Theorem~\ref{thm:main1}, since $\dbar \mu_k \wedge \varphi \to 0$
in $L^\lambda(X)$.

It remains to prove that \eqref{sjunga} holds for $p(X) < p \leq \hat p(X)$
when $\varphi$ satisfies $(*)$.
To prove \eqref{sjunga} we decompose $\mathcal{K} = \mathcal{K}_1 + \mathcal{K}_2$
as in Proposition ~\ref{prop:Kestimate}.
We saw in the proof of Theorem ~\ref{thm:main1} that $\mathcal K_1$ is
a bounded linear operator $L^p_{0,\rr+1}(X)\to L^p_{0,\rr}(D)$ for any $1\leq p\leq
\infty$. It follows, in particular, that $\mathcal{K}_1 ( \dbar \mu_k \wedge \varphi ) \to 0$
in the sense of distributions, since
$\dbar \mu_k \wedge \varphi \to 0$ in $L^{\lambda}_{0,\rr+1}(X)$ where $\lambda > 1$ since $p > 4/3$.

    We next consider $\mathcal{K}_2$. In view of Proposition
    ~\ref{prop:Kestimate} we may assume that the kernel is of the
    form \eqref{k2}.
To prove \eqref{sjunga} for $\mathcal K_2$ we need to prove that
    \begin{equation}\label{eq:fubini0}
        \langle \mathcal{K}_2( \dbar\mu_k \wedge \varphi) , \xi\rangle
        = \int_z \xi(z) \wedge \int_\zeta b(\zeta,z) \wedge
        \omega_X(\zeta) \wedge \frac{\overline{\eta_i}\eta_j}{|\eta|^4} \dbar \mu_k(\zeta) \wedge\varphi(\zeta)
    \end{equation}
tends to $0$ as $k\rightarrow\infty$ for each test form $\xi$.
    By Fubini's theorem, up to signs \eqref{eq:fubini0} is equal to
    \begin{equation} \label{eq:fubini1}
        \int_\zeta \omega_X \wedge \dbar \mu_k \wedge \varphi\wedge
        \int_z b(\zeta,z) \wedge \xi(z)
        \frac{\overline{\eta_i}\eta_j}{|\eta|^4}.
    \end{equation}
    We denote the inner integral with respect to $z$ by $\gamma(\zeta)$.
Note that
    \begin{equation*}
       \gamma = \int_z
       \frac{c(\zeta,z)(\overline{\zeta_i-z_i})(\zeta_j-z_j)}{|\zeta-z|^4},
    \end{equation*}
where $c(\zeta, z)$ is a smooth $(2,2)$-form.
Now $ |\gamma(\zeta) - \gamma(0)|$ is bounded by
 \begin{equation*}
        \int_z |c(\zeta,z) - c(0,z)| \left|\frac{(\overline{\zeta_i -
          z_i})(\zeta_j-z_j)}{|\zeta-z|^4}\right| +
      |c(0,z)|\int_z \left|\frac{(\overline{\zeta_i-z_i})(\zeta_j-z_j)}{|\zeta-z|^4}-
      \frac{\overline{z_i}z_j}{|z|^4}\right|:=I_1+I_2.
\end{equation*}
Since $c(\zeta, z)$ depends smoothly on $\zeta$, $|c(\zeta,z) -
c(0,z)|\lesssim |\zeta|$ and thus, in view of Lemma
~\ref{lem:estimate3},
$I_1 \lesssim |\zeta|$.
Moreover, by \eqref{eq:star0} and Lemma~\ref{lem:estimate3},
$I_2\lesssim |\zeta|\int_z\max(|\zeta-z|^{-3},
  |z|^{-3} )\lesssim |\zeta|$.

Since $\varphi$ satisfies $(*)$, and this condition is independent
of the choice of $\chi_k$, we may assume that $\chi_k = \mu_k$, and thus
$\int_\zeta \omega_X \wedge \dbar \mu_k \wedge \varphi \wedge \gamma(0)$ tends to $0$ as $k \to \infty$.
It follows from \eqref{eq:cutoff2} that $|\dbar\mu_k\wedge (\gamma(\zeta)-\gamma(0))|\leq C \chi_k(|\zeta|)$ when
$|\zeta|\ll 1$ and where $\chi_k$ is as in Section
~\ref{ssec:cut-off}. Since $p>p(X)$, by H\"older's inequality,
$\omega_X\wedge \varphi\in L^1(X)$ and therefore
$\lim_k \int_\zeta \omega_X \wedge \dbar \mu_k \wedge \varphi \wedge
(\gamma(\zeta)-\gamma(0))=0$ by dominated convergence. Hence \eqref{eq:fubini1} tends
to $0$ as $k\to \infty$.
\end{proof}

It follows from the proof of Theorem~\ref{thm:main3} that if $\varphi \in \Dom \dq_w^{(p)}$, with $p > \hat p(X)$, then
$(*)$ is automatically fulfilled for $\varphi$, since if $\mu_k$ is as in Section~\ref{ssec:cut-off}, then
\begin{equation*}
    \int_X \omega_X \wedge \dbar\mu_k \wedge \varphi \to 0
\end{equation*}
by H\"older's inequality.

\smallskip
It is worth noting that since the condition $(*)$ does not depend on $p$, we have the following
consequence of Theorem ~\ref{thm:main3}:

\begin{cor}
If the $\dq_w$-equation is locally solvable on a canonical surface for
some $p_0> p(X)$,
then is is locally solvable for all $p\geq p_0$.
\end{cor}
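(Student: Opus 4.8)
The plan is to derive this purely from Theorem~\ref{thmA} (equivalently Theorems~\ref{thm:main1} and~\ref{thm:main3}), exploiting that the condition $(*)$ makes no reference to the exponent $p$. For $(0,2)$-forms there is nothing to prove: by Theorem~\ref{thmA}(ii) the $\dbar_w$-equation is solvable in $L^p$ near $0$ for \emph{every} $p>p(X)$, unconditionally. So assume the $\dbar_w$-equation for $(0,1)$-forms is locally solvable in $L^{p_0}$ for some $p_0>p(X)$, and fix $p\ge p_0$. If $p>\hat p(X)$ — in particular if $p=\infty$, since $\hat p(X)<4$ — then solvability in $L^p$ holds unconditionally by Theorem~\ref{thmA}(ii), so we may assume $p\le \hat p(X)$, and then also $p(X)<p_0\le \hat p(X)$.

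By Theorem~\ref{thmA}(iii) applied with exponent $p_0$, the hypothesis is equivalent to the statement that every $\dbar_w$-closed $(0,1)$-form lying in $L^{p_0}$ near $0$ satisfies $(*)$. Now let $\varphi\in L^p_{0,1}(X)$ be $\dbar_w$-closed in a relatively compact neighborhood $D$ of $0$. Since $D$ has finite volume and $p\ge p_0$, H\"older's inequality gives $L^p(D)\subseteq L^{p_0}(D)$, so $\varphi$ is a $\dbar_w$-closed $(0,1)$-form in $L^{p_0}(D)$ and hence satisfies $(*)$. But for a $\dbar_w$-closed form the condition $(*)$ is just \eqref{star-intro}, which does not involve $p$ at all; thus $\varphi$, regarded now as an element of $L^p_{0,1}(X)$, still satisfies $(*)$. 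Since $p(X)<p\le \hat p(X)$, Theorem~\ref{thmA}(iii) applied with exponent $p$ yields a solution in $L^p$ of $\dbar_w u=\varphi$ near $0$. This establishes local solvability in $L^p$ and proves the corollary.

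I do not expect a genuine obstacle here; all the substantive work was done in Theorems~\ref{thm:main1}--\ref{thm:main3}. The only points needing a word of care are the local inclusion $L^p(D)\subseteq L^{p_0}(D)$ — which uses nothing more than that a neighborhood of the singular point has finite volume — and making precise what ``locally solvable'' quantifies over (closedness and the $L^p$-bound on some shrinkable neighborhood of $0$); with that natural reading the argument above is complete.
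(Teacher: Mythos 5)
Your proof is correct and follows essentially the same route as the paper, which derives the corollary from Theorem~\ref{thm:main3} via the single observation that the condition $(*)$ does not depend on $p$; you merely make explicit the two details the paper leaves implicit, namely the local inclusion $L^p(D)\subseteq L^{p_0}(D)$ on a finite-volume neighborhood and the reduction to the range $p(X)<p\le\hat p(X)$ where $(*)$ is the relevant criterion.
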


Morally this means that the number of obstructions to solving the
$\dq_w$-equation in the $L^p$-sense is decreasing in $p$. Theorem~1.1
in \cite{RMatZ} shows that the same kind of phenomenon holds for homogeneous varieties
with an isolated singularity.
%

\smallskip
Let $\varphi\in L^p_{0,1}(X)$, where $p(X)<p\leq 4$. Assume that
$\varphi\in\Dom \dbar_s$. Then by Theorem ~\ref{thm:main2}, $\varphi =
\dq_s \mathcal{K} \varphi$ which implies particularly that $\varphi = \dq_w \mathcal{K} \varphi$.
Hence, $\varphi$ satisfies $(*)$. It would be interesting to know whether the converse is also true,
i.e., if $\varphi$ satisfies $(*)$, does it follow that $\varphi\in\Dom\dq_s$?

\subsection{Proof of Theorem~\ref{functions}}\label{gorilla}
As explained after Proposition~\ref{prop:Kestimate}, the operator $\mathcal{P}$ is defined by an
integral kernel $P(\zeta,z)$ that is smooth with compact support in $\zeta$, and holomorphic in $z$.
Therefore  $\mathcal{P}$ extends to a compact operator $\mathcal{P} : L^p(X) \to \Ok(D)$, cf. the proof of Theorem~\ref{thm:main1}. 


The formula \eqref{trottel} for $\varphi \in \Dom \dbar_s^{(p)}$ and $p(X) < p \leq 4$ is proved in the same way
as  Theorem~\ref{thm:main2} above, using that \eqref{trottel} holds for the smooth functions $\varphi_j$, and
that $\mathcal{P} \varphi_j \to \mathcal{P} \varphi$.


Now assume that  $\varphi$ is a function in $\Dom\dq_w^{(p)}$, where $p(X) < p$.
We say that $\varphi$ satisfies $(*)$ if
\begin{equation} \label{star-functions}
    \int_X  \omega_X \wedge \dbar\chi_k \wedge \varphi \wedge \alpha \to 0
\end{equation}
for any smooth $\dbar$-closed $(0,1)$-form $\alpha$ and sequence $\chi_k$ as in Section~\ref{ssec:star}.
In particular, if $\varphi$ is $\dbar$-closed, i.e., holomorphic on the regular part of $X$, then 
as $X$ is a canonical surface, $X_{sing}$ has codimension $2$ and thus  $\varphi$ is bounded in a neighborhood
of the singularity at the origin. Therefore $\varphi\in L^p$ for any $p\geq 1$ and it follows as for $(0,1)$-forms above that
$(*)$ is satisfied.

If $\varphi \in \Dom\dq_w^{(p)}$ and $p > \hat p(X)$, then \eqref{trottel} can be verified in the
same way as Theorem~\ref{thm:main3} above.
If instead $p(X) < p \leq \hat p(X)$ and $\varphi$ satisfies \eqref{star-functions},
one just needs to make minor modifications. Namely, at the point where one considers $\gamma(\zeta)-\gamma(0)$,
then $\gamma$ is a $(0,1)$-form, and one then writes $\gamma = \sum \gamma_i d\bar{\zeta}_i$,
decomposes $\gamma_i(\zeta) = \gamma_i(0) + (\gamma_i(\zeta)-\gamma_i(0))$ and proceeds as in the proof above.
The condition \eqref{star-functions} is then finally applied with $\alpha = \gamma_i(0) d\bar{\zeta}_i$.

\subsection{Homotopy formulas with compact support}\label{kompakta}
We get versions of Theorems ~\ref{thm:main2} and ~\ref{thm:main3} 
also for the operators in Theorem \ref{kompakt1}.

\begin{thm} \label{kompakt2}
Assume we are in the situation of Theorem ~\ref{thm:main1}.

\smallskip
\noindent (i) Let $p(X)< p \leq 4$ and
let $\varphi$ be an $(0,\rr)$-form in $\Dom\dq_s^{(p)} \subset L^p_c(D)$.
Then $\hat\K\varphi \in \Dom\dq_s^{(p)}$,
\begin{equation}\label{kopplaav}
      \varphi(z) = \dq_s \hat\K \varphi(z) + \hat\K  \dq_s\varphi(z) \
\text{ if } \rr=0,1,   \quad
      \varphi(z) = \dq_s\hat\K  \varphi(z) + \hat\Pj \varphi(z)  \ \text{ if } \rr=2.
\end{equation}

\smallskip
\noindent (ii) If $\hat p(X) < p \leq \infty$ and $\varphi$ is a $(0,\rr)$-form with $\rr=0,1$, in $\Dom\dq_w^{(p)} \subset L^p_c(D)$,
then $\hat\K\varphi \in \Dom\dq_w^{(p)}$ and
\begin{equation}\label{hopplaav}
      \varphi(z) = \dq_w \hat\K  \varphi(z) + \hat\K \dq_w \varphi(z).
\end{equation}
If $p(X)< p\le \hat p(X)$, and in addition $\varphi$ satisfies the condition $(*)$, then the same
conclusion holds.

\smallskip
\noindent (iii)
If $p(X)< p \leq \infty$ and $\varphi$ is a $(0,2)$-form in $\Dom\dq_w^{(p)} \subset L^p_c(D)$, then
$\hat\K\varphi \in \Dom\dq_w^{(p)}$ and
\begin{equation}\label{hopplaav2}
      \varphi(z) = \dq_w\hat\K  \varphi(z) + \hat\Pj \varphi(z).
\end{equation}
The $(0,2)$-form $\varphi$ satisfies that $\hat\Pj\varphi=0$ if \eqref{kokett} holds, and if $p > \hat p(X)$, then the converse holds.
\end{thm}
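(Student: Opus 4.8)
The plan is to mirror the proofs of Theorems~\ref{thm:main2} and~\ref{thm:main3}, but using the alternative operators $\hat\K,\hat\Pj$ of Section~\ref{ssec:KPcompact}, whose kernels differ from those of $\K,\Pj$ only in that the weight $g$ is now holomorphic in $\zeta$ with compact support in $z$; in particular Proposition~\ref{prop:Kestimate} and the mapping properties of Theorem~\ref{kompakt1} are available, and the pointwise homotopy formulas \eqref{dansa} hold on $D_{reg}$ for smooth forms. For part (i), given $\varphi\in\Dom\dq_s^{(p)}\subset L^p_c(D)$ with $p(X)<p\le 4$, I would take the approximating sequence $\{\varphi_j\}$ of smooth bounded forms supported away from $\{0\}$ furnished by Lemma~\ref{lem:cut-off2} (as in the proof of Theorem~\ref{thm:main2}); one should note that one may arrange $\varphi_j$ to still have compact support in $D$, so that the pointwise formulas \eqref{dansa} apply to each $\varphi_j$. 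Passing to the limit using the boundedness of $\hat\K,\hat\Pj$ on $L^p$ (Theorem~\ref{kompakt1}) gives $\hat\K\varphi_j\to\hat\K\varphi$, $\hat\K\dq\varphi_j\to\hat\K\dq\varphi$, and $\hat\Pj\varphi_j\to\hat\Pj\varphi$ in $L^p$; hence \eqref{kopplaav} holds in the limit, first as an $L^p$-identity (and distributionally), and then $\dq_w\hat\K\varphi\in L^p$ follows. To upgrade to $\Dom\dq_s^{(p)}$ one applies Lemma~\ref{lem:cut-off2} once more: $\{\hat\K\varphi_j\}$ is a sequence of bounded forms (by the $L^\infty\to C^\alpha$ part of Theorem~\ref{kompakt1}, since $\varphi_j$ is bounded) converging to $\hat\K\varphi$ in $L^p$ with $\dq\hat\K\varphi_j\to\dq\hat\K\varphi$ in $L^p$.

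For parts (ii) and (iii) I would instead imitate the proof of Theorem~\ref{thm:main3}: cut off $\varphi$ by $\mu_k$ from Section~\ref{ssec:cut-off}, apply \eqref{dansa} to $\varphi_k=\mu_k\varphi$ (legitimate since $\varphi_k$ is supported away from $\{0\}$ and still has compact support in $D$), and reduce everything, exactly as there, to the assertion that $\hat\K(\dq\mu_k\wedge\varphi)\to 0$ distributionally. Decomposing $\hat\K=\hat\K_1+\hat\K_2$ as in Proposition~\ref{prop:Kestimate}, the $\hat\K_1$-part is handled by its $L^\lambda\to L^\lambda$ boundedness and $\dq\mu_k\wedge\varphi\to 0$ in $L^\lambda$, $\lambda=4p/(p+4)>1$; for $\hat\K_2$, when $p>\hat p(X)$ one has $\lambda>p(X)$ and boundedness of $\hat\K_2$ on $L^\lambda$ finishes it, whereas for $p(X)<p\le\hat p(X)$ one runs the Fubini argument: the inner $z$-integral produces a $(2,2)$-form $\gamma(\zeta)$ with $|\gamma(\zeta)-\gamma(0)|\lesssim|\zeta|$ (via \eqref{eq:star0} and Lemma~\ref{lem:estimate3}), the $\gamma(\zeta)-\gamma(0)$ piece vanishes by dominated convergence using \eqref{eq:cutoff2} and $\omega_X\wedge\varphi\in L^1$, and the $\gamma(0)$ piece vanishes precisely because $\varphi$ satisfies $(*)$ (with $\chi_k=\mu_k$). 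For the $(0,2)$-case in (iii) the corresponding step is vacuous (there is nothing to prove for $\rr=2$, as noted in the proof of Theorem~\ref{thm:main3}), so \eqref{hopplaav2} follows directly once one has the pointwise formula and the $L^p$-convergence; that $\dq_w\hat\K\varphi\in L^p$ again follows from Theorem~\ref{kompakt1}.

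It remains to address the final sentence, characterizing $\hat\Pj\varphi=0$ for a $(0,2)$-form $\varphi$. Here the key point is that $\hat\Pj$ is given by an integral kernel $\hat P(\zeta,z)=\omega_X(\zeta)\wedge\tilde P(\zeta,z)$ with $\tilde P$ built from $h\wedge g_{n,n}$, smooth and now holomorphic in $\zeta$, so $\hat\Pj\varphi(z)=\int_X\varphi\wedge\omega_X(\zeta)\wedge(\text{holomorphic }(n,n)\text{-form in }\zeta\text{, polynomial/holomorphic in }z)$; thus $\hat\Pj\varphi\equiv 0$ iff $\int_X\varphi\wedge h\,\omega_X=0$ for all $h$ in the relevant space of holomorphic functions. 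For the ``if'' direction one uses that the $z$-coefficients of $\tilde P$ lie in $\Ok(X)$ (restricted from $H$), so \eqref{kokett} forces $\hat\Pj\varphi=0$. For the converse, valid when $p>\hat p(X)$: from \eqref{hopplaav2} and the fact that $\dq_w\hat\K\varphi$ is exact, pairing with an arbitrary $h\in\Ok(X)$ gives $\int_X\varphi\wedge h\omega_X=\int_X\hat\Pj\varphi\wedge h\omega_X+\int_X\dq_w\hat\K\varphi\wedge h\omega_X$; the last integral vanishes by Stokes' theorem since $h\omega_X$ is $\dq$-closed with compact support and $p>\hat p(X)$ guarantees enough integrability of $\hat\K\varphi$ (this is exactly where the restriction $p>\hat p(X)$ enters, ensuring the boundary term at $\{0\}$ vanishes, cf.\ the remark after Theorem~\ref{thm:main3}), so $\hat\Pj\varphi=0$ implies \eqref{kokett}. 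I expect the main obstacle to be this last boundary-term analysis: one must be careful that, in the range $p(X)<p\le\hat p(X)$, the integration by parts against $h\omega_X$ produces exactly the $(*)$-type boundary contribution and hence the equivalence can fail, which is why the converse is only claimed for $p>\hat p(X)$; making this rigorous amounts to re-using the $(*)$-independence argument from Section~\ref{ssec:star} together with the integrability bound $\omega_X\wedge\varphi\in L^1$.
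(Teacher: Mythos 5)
Your proposal is correct and follows essentially the same route as the paper: repeat the proofs of Theorems~\ref{thm:main2} and~\ref{thm:main3} for $\hat\K,\hat\Pj$ using Theorem~\ref{kompakt1}, noting that the approximating sequences can be taken with compact support, and characterize $\hat\Pj\varphi=0$ via the holomorphic form $h\omega_X$ of the kernel in $\zeta$ together with the fact that $u=\hat\K\varphi$ is a compactly supported solution whose $(*)$-type boundary term at $\{0\}$ vanishes when $p>\hat p(X)$. (Only a phrasing slip: in the converse direction the compact support justifying Stokes belongs to $u=\hat\K\varphi$, not to $h\omega_X$; your subsequent analysis of the boundary term at the singularity is the correct one.)
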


These statements are proved essentially by the same arguments as in
the proofs of Theorems  ~\ref{thm:main2} and  ~\ref{thm:main3}.
For \eqref{kopplaav}, notice that
as $\varphi$ has compact support in $D$, when choosing the approximating sequences
$\{\varphi_j\}_j$  we may, in addition, assume that the $\varphi_j$ have compact support
in $D$ as well.

For the last statement, notice that the kernel for $\hat\Pj$ has the form $h\omega_X$
with respect to $\zeta$ in a Stein neighborhood of  the support of $\varphi$. Since $X$ is Stein, we can assume that
$h$ is holomorphic on $X$ and so $\hat\Pj\varphi=0$ if \eqref{kokett} holds.  Conversely, if
$\hat\Pj\varphi=0$, then $u=\hat\K\varphi$ is a solution to $\dbar u=\varphi$ with support on the compact set $H$,
see Section~\ref{ssec:KPcompact}.
It then follows that \eqref{kokett} holds if and only if $u$ satisfies $(*)$, which as we saw in Section~\ref{ssec:star}
is automatically satisfied for $p > \hat p(X)$.

Note that if $\varphi$ is a $(0,1)$-form in $L^p_c(X)$ and $\dbar \varphi = 0$, then it automatically satisfies $(*)$, so
if $p> p(X)$, then $u = \hat{\mathcal{K}}\varphi$ is a solution with compact support to
$\dbar u = \varphi$.

\subsection{On the domain of the $\dbar_X$-operator}\label{snabel}

The setting in \cite{AS} is rather different compared to this article. Here
we are mainly concerned
with forms on $X$ with coefficients in $L^p$, while in \cite{AS}, the type of forms/currents
considered, denoted $\mathcal{W}_X^\rr$, are ``generically'' smooth, see \cite{AW}. They
include principal value currents $\alpha/f$, where
$f$ is holomorphic and $\alpha$ is smooth, and direct images of such currents, but with no
growth restrictions on the singularities.
For the precise definition of the sheaf $\mathcal{W}_X^\rr$ we refer to \cite{AS,AW}.
The $\dbar$-operator considered in \cite{AS}
is somewhat different from $\dq_s$ and $\dq_w$ considered here.
For currents in $\mathcal{W}_X^\rr$, one can define the product with the structure form
$\omega_X$ associated to the variety. A current $\mu \in
\mathcal{W}_X^\rr$ lies in $\Dom \dq_X$ if $\dbar\mu\in \mathcal
W_X^{\rr+1}$ and $\dq (\mu \wedge \omega_X) =\dq \mu \wedge \omega_X$.
%
Combining our results about $\mathcal{K}$ and the $\dq_w$- and $\dq_s$-operator
with some properties about the $\mathcal{W}_X$-sheaves,
we obtain a result for the $\dq_X$-operator, providing a partial answer
to a question in \cite{AS}, cf.\ the paragraph at the end of page 288 in \cite{AS}.

\begin{thm}\label{thm:main5}
In the situation of Theorem~\ref{thm:main1},
let $p(X) < p \leq 4$ and $\varphi\in \Dom \dq^{(p)}_s \cap \mathcal{W}_X^\rr(X)$.
Then $\mathcal{K} \varphi \in  \Dom\dq_X.$
\end{thm}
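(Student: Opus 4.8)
The plan is to combine three ingredients: the homotopy formula for $\mathcal{K}$ on $\Dom\dq_s$ from Theorem~\ref{thm:main2}, the $L^p$-regularity of $\mathcal{K}$ from Theorem~\ref{thm:main1}, and the structural properties of the sheaves $\mathcal{W}_X^\rr$ from \cite{AS,AW}. Recall that by definition $\mathcal{K}\varphi \in \Dom\dq_X$ means two things: first, $\dbar(\mathcal{K}\varphi) \in \mathcal{W}_X^{\rr}$ (i.e. $\mathcal{K}\varphi\in\mathcal{W}_X^{\rr-1}$ with $\dbar$ of it again in $\mathcal{W}_X$), and second, that the Leibniz-type identity $\dq\big((\mathcal{K}\varphi)\wedge\omega_X\big) = \dq(\mathcal{K}\varphi)\wedge\omega_X$ holds. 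So the proof naturally splits into verifying these two conditions.

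For the first condition, I would argue that $\mathcal{K}\varphi$ lies in $\mathcal{W}_X^{\rr-1}$. The key point is that the kernel $K(\zeta,z)$ contains $\omega_X(\zeta)$ as a factor and is otherwise built from the Bochner--Martinelli form and smooth data; the operator $\mathcal{K}$ was already shown in \cite{AS} to map $\mathcal{W}_X^\rr$ into $\mathcal{W}_X^{\rr-1}$ (this is essentially why the sheaves $\mathcal{A}_X^\rr$ were introduced there). Since we assume $\varphi\in\mathcal{W}_X^\rr(X)$, this gives $\mathcal{K}\varphi\in\mathcal{W}_X^{\rr-1}(D)$ directly. Then by Theorem~\ref{thm:main2} we have the homotopy identity $\varphi = \dq_s\mathcal{K}\varphi + \mathcal{K}\dq_s\varphi$ in $L^p$, so $\dbar(\mathcal{K}\varphi) = \varphi - \mathcal{K}\dq_s\varphi$; since $\varphi\in\mathcal{W}_X^\rr$ and (using that $\dq_s\varphi\in L^p$ and, when it lies in $\mathcal{W}_X^{\rr+1}$, that $\mathcal{K}$ preserves the $\mathcal{W}_X$-sheaves) $\mathcal{K}\dq_s\varphi\in\mathcal{W}_X^{\rr}$, we conclude $\dbar(\mathcal{K}\varphi)\in\mathcal{W}_X^{\rr}$. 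One subtlety to address: the distributional $\dbar$ of $\mathcal{K}\varphi$ computed as a current must agree with $\dq_s\mathcal{K}\varphi$; but this follows because $\dq_s$ is a restriction of $\dq_w$, which is the distributional $\dbar$.

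For the second condition — the Leibniz rule $\dq(\mathcal{K}\varphi\wedge\omega_X) = \dq\mathcal{K}\varphi\wedge\omega_X$ — I would exploit the approximation from Lemma~\ref{lem:cut-off2}: choose smooth $\varphi_j$ with support away from $0$, $\varphi_j\to\varphi$ and $\dq\varphi_j\to\dq_s\varphi$ in $L^p$. For each $\varphi_j$ the form $\mathcal{K}\varphi_j$ is smooth on $D$ (by \cite[Lemma~6.1]{AS}), so the Leibniz rule is trivially true for $\mathcal{K}\varphi_j\wedge\omega_X$. The task is to pass to the limit. Here Theorem~\ref{thm:main1} gives $\mathcal{K}\varphi_j\to\mathcal{K}\varphi$ in $L^p(D)$ and (via the homotopy formula) $\dq\mathcal{K}\varphi_j\to\dq_s\mathcal{K}\varphi$ in $L^p(D)$; since $\omega_X\in L^q(D)$ for $q<q(X)$ and $1/p+1/q<1$ (because $p>p(X)$ is dual-related to $q(X)$), the products $\mathcal{K}\varphi_j\wedge\omega_X$ and $\dq\mathcal{K}\varphi_j\wedge\omega_X$ converge in $L^1(D)$, hence as currents, to $\mathcal{K}\varphi\wedge\omega_X$ and $\dq_s\mathcal{K}\varphi\wedge\omega_X$. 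Since $\dq$ is continuous on currents, $\dq(\mathcal{K}\varphi\wedge\omega_X) = \lim\dq(\mathcal{K}\varphi_j\wedge\omega_X) = \lim \dq\mathcal{K}\varphi_j\wedge\omega_X = \dq_s\mathcal{K}\varphi\wedge\omega_X$, which is exactly the Leibniz identity.

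The main obstacle I anticipate is the bookkeeping around the $\mathcal{W}_X$-sheaves: one must check carefully that $\mathcal{K}$ genuinely maps $\mathcal{W}_X^\rr$ into $\mathcal{W}_X^{\rr-1}$ in the generality needed (including the behaviour of $\dq_s\varphi$, which is a priori only an $L^p$-form — one needs it, or at least $\mathcal{K}\dq_s\varphi$, to land in the right $\mathcal{W}_X$-sheaf), and that the product with $\omega_X$ used in the definition of $\Dom\dq_X$ is the same as the naive product of $L^p$- and $L^q$-forms appearing in the limiting argument. This is really a question of matching the two different frameworks — the $L^p$-framework of this paper and the $\mathcal{W}_X$-framework of \cite{AS} — and is exactly the "partial answer" alluded to before the statement. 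Once that compatibility is in place, the convergence arguments above are routine given Theorems~\ref{thm:main1} and~\ref{thm:main2}.
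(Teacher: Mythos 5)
Your overall architecture is right, and the second half of your argument (the Leibniz identity $\dq(\mathcal K\varphi\wedge\omega_X)=\dq\mathcal K\varphi\wedge\omega_X$ via approximation by smooth forms supported away from $0$, $L^p$-convergence from Theorems~\ref{thm:main1} and~\ref{thm:main2}, and H\"older against $\omega_X\in L^q$ with $1/p+1/q\le 1$) is essentially the paper's argument. The only cosmetic difference there is that the paper approximates $\mathcal K\varphi$ directly by forms $\psi_j$ supported away from the singularity, which is available because $\mathcal K\varphi\in\Dom\dbar_s^{(p)}$ by Theorem~\ref{thm:main2}, rather than approximating $\varphi$ and pushing the approximation through $\mathcal K$; either route works.

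The genuine gap is in the first half, and it is exactly the point you flag but leave unresolved: to get $\dbar\mathcal K\varphi\in\mathcal W_X^{\rr}$ from the homotopy formula you need $\mathcal K\dq_s\varphi\in\mathcal W_X^{\rr}$, hence $\dq_s\varphi\in\mathcal W_X^{\rr+1}$, and this is not among the hypotheses --- $\dq_s\varphi$ is a priori only an $L^p$-form, so the mapping property of $\mathcal K$ on the $\mathcal W$-sheaves does not apply to it. The missing ingredient is the regularity result from \cite{AW}: a pseudomeromorphic current with coefficients in $L^p$ (indeed $L^1_{loc}$) belongs to $\mathcal W$. The paper uses this to bypass your conditional entirely: since $\mathcal K\varphi\in\mathcal W(D)$ by \cite[Proposition~1.5]{AS}, the current $\dbar\mathcal K\varphi$ is automatically pseudomeromorphic, and it lies in $L^p(D)$ because $\mathcal K\varphi\in\Dom\dbar_s^{(p)}$ by Theorem~\ref{thm:main2}; hence $\dbar\mathcal K\varphi\in\mathcal W(D)$ with no need to analyse $\mathcal K\dq_s\varphi$ at all. (The same fact would also discharge your version of the step, since $\dbar\varphi$ is pseudomeromorphic and in $L^p$, but without invoking it the argument does not close.)
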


Note that if $\varphi \in \Dom \dq^{(p)}_w$, where $p > \hat p(X)$, then by Lemma~\ref{lem:cut-off},
$\varphi \in \Dom \dq^{(\lambda)}_s$, where $\lambda > p(X)$, so also in this case, $\mathcal{K}\varphi \in \Dom \dq_X$.

\begin{proof}
First, by \cite[Proposition~1.5]{AS}, $\mathcal K\varphi\in \mathcal
W(D)$. Then in particular $\dbar \mathcal K\varphi$ is a
pseudomeromorphic current, see, e.g., \cite{AW}. Moreover, by Theorem
\ref{thm:main2} $\mathcal K\varphi\in\Dom\dbar_s^{(p)}$, and thus
$\dbar \mathcal K\varphi \in L^p(D)$. Now a pseudomeromorphic current
in $L^p$ is in fact in $\mathcal W$, cf.\ \cite{AW}. Hence
$\dbar \mathcal K\varphi \in \mathcal W(D)$.

Since $\mathcal K\varphi \in \Dom \dbar_s^{(p)}$, there is a sequence
of smooth forms $\psi_j$ with support away from the singularity such
that $\psi_j\to \mathcal K\varphi$ and $\dbar\psi_j\to \dbar \mathcal
K\varphi$ in $L^p(D)$.
Since $\omega_X\in L^q(X)$ for each $q<q(X)$, by H\"older's
inequality,
$\psi_j\wedge \omega_X$ and $\dbar \psi_j\wedge \omega_X$ converge in
$L^1(D)$ to
$\mathcal K\varphi\wedge \omega_X$ and $\dbar\mathcal K\varphi \wedge
\omega$, respectively. Hence
\begin{equation*}
\dbar (\mathcal K\varphi \wedge \omega_X) =
\lim_j \dbar(\psi_j\wedge \omega_X) =
\lim_j \dbar\psi_j\wedge \omega_X =
\dbar \mathcal K\varphi \wedge \omega_X.
\end{equation*}
We conclude that $\mathcal K\varphi\in \Dom \dbar_X$.
\end{proof}

\section{Examples and counterexamples}
\label{sec:examples}

In this section, we study the condition $(*)$ and $\dq_w$-Koppelman formulas for all types of canonical surface singularities:
$A_n$, $n\geq 1$, $D_n$, $n\geq 4$, $E_6$, $E_7$ and $E_8$.
We focus on the important case of $L^2$-cohomology, i.e., $p=2$.
However, we also get some statements for $p\geq 2$.
All in all we obtain a complete picture about the solvability of the $\dq_w$-equation in the $L^2$-sense
at canonical surface singularities.

\subsection{The $A_n$-singularities}
\label{ssec:An}

Recall that the $A_n$-singularity for $n\geq 1$ is the variety $X = \{
f(\zeta) = 0 \} \subset \C^3$, where $f(\zeta) = \zeta_1 \zeta_2 - \zeta_3^{n+1}$.

\begin{thm}\label{thm:An}
Let $X$ be (a neighborhood of the origin of) the $A_n$ singularity
$\{\zeta_1\zeta_2=\zeta_3^{n+1}\}\subset \C^3$,
let $p\geq 2$, and let $\varphi \in\Dom \dq_w^{(p)} \subset
L^p_{0,\rr}(X)$. Then $\varphi$ satisfies the condition $(*)$.
\end{thm}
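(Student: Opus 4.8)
The plan is to show that the condition $(*)$ holds for every $\varphi \in \Dom\dq_w^{(p)}\subset L^p_{0,\rr}(X)$ with $p\ge 2$, and for the $A_n$-singularity this should follow from a direct size estimate: $(*)$ reduces, via the discussion in Section~\ref{ssec:star} (replace the cut-off sequence by $\{\mu_k\}$ from Section~\ref{ssec:cut-off}), to showing
\begin{equation*}
    \int_X \omega_X \wedge \dbar\mu_k \wedge \varphi \longrightarrow 0 \quad (k\to\infty).
\end{equation*}
By the remark after the proof of Theorem~\ref{thm:main3}, this is automatic when $p > \hat p(X)$; so the only content is in the range $p(X) < p \le \hat p(X)$, and in particular the case $p=2$. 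The key point special to $A_n$ is that $q(X) = 4$ by Corollary~\ref{qXbound}, hence $p(X) = 4/3$ and $\hat p(X) = 4$, and that $\omega_X$ is essentially sharply in $L^q$ for $q < 4$. First I would make the structure form explicit: with $f = \zeta_1\zeta_2 - \zeta_3^{n+1}$ one has, up to a constant, $\omega_X = d\zeta_2\wedge d\zeta_3/\zeta_2$ on $X_{reg}$ (choosing the chart where $\partial f/\partial\zeta_1 = \zeta_2\ne 0$), and by \eqref{omegaNorm}, $|\omega_X| = c/|\partial f|$ with $|\partial f|^2 = |\zeta_1|^2 + |\zeta_2|^2 + (n+1)^2|\zeta_3|^{2n}$.

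The main estimate I would carry out is a pointwise bound on the integrand near $0$. Using \eqref{eq:cutoff2}, $|\dbar\mu_k|$ is supported on the annulus $A_k := \{e^{-e^{k+1}} \le |\zeta| \le e^{-e^k}\}$ and bounded there by $C/(|\zeta|\,|\log|\zeta||)$. Therefore
\begin{equation*}
    \Big|\int_X \omega_X\wedge\dbar\mu_k\wedge\varphi\Big| \lesssim \int_{A_k} \frac{|\omega_X(\zeta)|\,|\varphi(\zeta)|}{|\zeta|\,|\log|\zeta||}\,dV_X(\zeta).
\end{equation*}
The standard approach is now Hölder: split the exponent among $|\omega_X|$, $|\varphi|$, and the remaining radial factor $1/(|\zeta|\,|\log|\zeta||)$ on $A_k$. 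Since $\varphi\in L^p$ with $p\ge 2$, I would use its $L^p$-norm on $A_k$ (which tends to $0$); pair $|\omega_X|$ with an $L^q$-norm for suitable $q<4$; and control the radial weight $\chi_{A_k}/(|\zeta|\,|\log|\zeta||)$ in the remaining conjugate exponent $L^r$. The arithmetic to check is $1/p + 1/q + 1/r = 1$ with $q<4$ (so $\omega_X\in L^q$ by Proposition~\ref{prop:canonical}/Corollary~\ref{qXbound}) and with $r$ large enough that $\|\chi_{A_k}/(|\zeta|\,|\log|\zeta||)\|_{L^r(X)}$ is finite and — crucially — bounded (or $\to 0$) as $k\to\infty$. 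Here the logarithmic double-cut-off is exactly what is needed: on $A_k$ one has $|\zeta|^{-r}\,|\log|\zeta||^{-r}$ integrated against $dV_X \sim |\zeta|^{3}d|\zeta|\,d\sigma$ (real dimension $4$), giving a convergent sum over $k$ of terms like $e^{-kr}\cdot(\text{const})$ provided $r$ is not too small; combined with the bound $|\varphi|_{L^p(A_k)}\to 0$, the whole expression tends to $0$. The one thing to verify is that such a triple $(p,q,r)$ exists for every $p\ge 2$: since $p\ge 2$ gives $1/p\le 1/2$, and we may take $q$ as close to $4$ as we like (so $1/q$ close to $1/4$), we are left needing $1/r \ge 1 - 1/2 - 1/4 = 1/4$, i.e. $r\le 4$ — and the log-weighted radial integral over $A_k$ with $dV_X$ of dimension $4$ is exactly borderline-integrable with a log to spare, so a suitable $r<4$ (or even $r=4$ with the logarithm doing the work) is available.

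The main obstacle is precisely this borderline arithmetic at $p=2$: one is forced to take $q$ near the endpoint $4$ and $r$ near the endpoint $4$ simultaneously, and the estimate only closes because (i) $A_n$ has $q(X)=4$, the largest possible value, giving the most room for $\omega_X$, and (ii) the cut-offs $\mu_k$ carry the extra logarithmic decay from \eqref{eq:cutoff2} rather than mere $1/|\zeta|$. I would therefore state the estimate carefully with the logarithm retained throughout, rather than absorbing it, and note that it is exactly this logarithmic gain — unavailable for the $D_n$ and $E_k$ cases where $q(X)<4$ forces a genuine obstruction (Theorem~\ref{thm:examples2}) — that makes the $A_n$ case unconditional. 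A secondary, purely bookkeeping issue is handling $(0,\rr)$-forms for both $\rr=1$ and $\rr=2$: for $\rr=2$ the wedge $\dbar\mu_k\wedge\varphi$ vanishes identically (as observed in the proof of Theorem~\ref{thm:main3}), so the case $\rr=2$ is trivial, and only $\rr=1$ requires the estimate above.
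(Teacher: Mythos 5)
Your reduction of $(*)$ to showing $\int_X\omega_X\wedge\dbar\mu_k\wedge\varphi\to 0$, and your observation that only $\rr=1$ (and the borderline range of $p$, in particular $p=2$) carries content, match the paper. But the core estimate as you propose it does not close, and the exponent arithmetic contains a sign error that hides this. You write that you need $1/r\ge 1-1/2-1/4$, ``i.e.\ $r\le 4$''. In fact $\omega_X\notin L^4$ near an $A_n$-point (one checks via the covering below that $\int|\omega_X|^4dV_X$ diverges), so you must take $q<4$ strictly, hence $1/q>1/4$ and $1/r=1-1/p-1/q<1/4$, i.e.\ $r>4$. And for every $r>4$ the factor $\chi_{A_k}/(|\zeta|\,|\log|\zeta||)$ has $L^r(X)$-norm that blows up doubly exponentially in $k$: since $\mathrm{vol}(X\cap B_\rho)\sim\rho^4$, the integral $\int_{A_k}|\zeta|^{-r}|\log|\zeta||^{-r}dV_X\sim\int\rho^{3-r}|\log\rho|^{-r}d\rho$ is dominated by the inner radius $e^{-e^{k+1}}$ once $r>4$, and the logarithm cannot compensate a power. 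So no three-way H\"older split of $|\omega_X|$, $|\varphi|$ and the radial weight into separate Lebesgue norms can work at $p=2$; the estimate is genuinely not a product of three independent borderline bounds.

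What is actually needed, and what the paper does, is to keep $\omega_X$ and the radial weight \emph{together}: by Cauchy--Schwarz the integral over $D_k$ is bounded by $(I_{1,k})^{1/2}(I_{2,k})^{1/2}$ with $I_{2,k}=\int_{D_k}|\gamma|^2\beta^2\to0$ by dominated convergence, so it suffices to prove the uniform bound
\begin{equation*}
I_{1,k}=\int_{D_k}\frac{|\omega_{i,j}|^2\,\beta^2}{|\zeta|^2\log^2|\zeta|}\lesssim 1 .
\end{equation*}
This joint bound is exactly the cancellation your separated H\"older estimate throws away, and it is proved using the special structure of $A_n$: the branched covering $\pi(s,t)=(s^{n+1},t^{n+1},st)$ of $X$ by $\C^2$ satisfies $\pi^*|\omega_{i,j}|^2\,\pi^*\beta^2\lesssim dV(s,t)$, so $I_{1,k}$ becomes an explicit integral over $\pi^{-1}(D_k)\subset\C^2$ with denominator $(|s|^{2n+2}+|t|^{2n+2}+|st|^2)\log^2(\cdot)$, which is then bounded by splitting $\pi^{-1}(D_k)$ into the region where $|st|^2$ is comparable to the whole expression (handled by a known $\int dV/(|st|^2\log^2|st|^2)$ estimate) and its complement (handled by polar coordinates). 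Without some substitute for this step your argument is incomplete. A smaller point: your closing remark that $q(X)<4$ ``forces a genuine obstruction'' for $D_n,E_k$ is not how the paper obtains Theorem~\ref{thm:examples2}; the obstructions there are cohomological (Euler characteristics and intersection numbers on the resolution), not a consequence of the integrability exponent of $\omega_X$.
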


In combination with Theorem ~\ref{thmA} we get the following.

\begin{cor}
The $\dq_w$-equation is solvable at the $A_n$-singularity in the $L^p$-sense for $p\geq 2$.
\end{cor}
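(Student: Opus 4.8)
The plan is to verify the condition $(*)$ directly from its defining integral, after two reductions. If $\rr=2$ then $\omega_X\w\dbar\chi_k\w\varphi$ has bidegree $(2,3)$ and so vanishes identically on the surface $X$; hence $(*)$ is trivial. And for $p>2$ the condition is automatic by the remark following Theorem~\ref{thm:main3}, since $\hat p(X)=2$ for an $A_n$-singularity by Corollary~\ref{qXbound}. Thus I may assume $\varphi$ is a $(0,1)$-form and $p=2$. By the independence of $(*)$ of the cut-off sequence (Section~\ref{ssec:star}) it suffices to check it for the particular sequence $\chi_k:=1-\mu_k$, with $\mu_k$ the cut-off functions of Section~\ref{ssec:cut-off}; since $\dbar\chi_k=-\dbar\mu_k$, the goal is to show $\int_X\omega_X\w\dbar\mu_k\w\varphi\to0$ as $k\to\infty$.

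The estimate of this integral is a plain Cauchy--Schwarz. Since $\supp\dbar\mu_k$ lies in the shell $A_k:=\{\,e^{-e^{k+1}}\le|\zeta|\le e^{-e^{k}}\,\}$,
\[
\Big|\int_X\omega_X\w\dbar\mu_k\w\varphi\Big|\ \lesssim\ \|\omega_X\w\dbar\mu_k\|_{L^2(X)}\,\|\varphi\|_{L^2(A_k\cap X)}.
\]
Because $\varphi\in L^2(X)$ and $A_k\cap X$ shrinks to the null set $\{0\}$, the second factor tends to $0$. So everything reduces to the uniform bound $\sup_k\|\omega_X\w\dbar\mu_k\|_{L^2(X)}<\infty$, and by \eqref{eq:cutoff2} (which gives $|\omega_X\w\dbar\mu_k|\lesssim|\omega_X|\,\mathbf 1_{A_k}(|\zeta|)/(|\zeta|\,|\log|\zeta||)$) this reduces in turn to
\[
\sup_k\int_{A_k\cap X}\frac{|\omega_X|^2}{|\zeta|^2\,|\log|\zeta||^2}\,dV_X<\infty.
\]

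This last bound is where the structure of the $A_n$-singularity is used, and is the heart of the proof. I would prove it by pulling the integral back under the minimal resolution $\pi\colon M\to X$, which is crepant because $X$ is a rational double point; thus $\omega_M:=\pi^*\omega_X$ is a nowhere vanishing holomorphic $(2,0)$-form on $M$, and $\pi^*(\omega_X\w\overline{\omega}_X)=\omega_M\w\overline{\omega}_M$ gives $\pi^*dV_X=(|\omega_M|^2/\pi^*|\omega_X|^2)\,dV_M$. Consequently
\[
\int_{A_k\cap X}\frac{|\omega_X|^2}{|\zeta|^2|\log|\zeta||^2}\,dV_X
=\int_{\pi^{-1}(A_k)}\frac{|\omega_M|^2}{\pi^*|\zeta|^2\,(\pi^*\log|\zeta|)^2}\,dV_M
\ \lesssim\ \int_{\pi^{-1}(A_k)}\frac{dV_M}{\pi^*|\zeta|^2\,(\pi^*\log|\zeta|)^2}.
\]
For an $A_n$-singularity every component of the exceptional divisor occurs with multiplicity $1$ in the unreduced exceptional divisor --- this is exactly what yields $q(A_n)=4$ in Corollary~\ref{qXbound} --- so in suitable local coordinates on $M$ one has $\pi^*|\zeta|^2\sim|u|^2$ near a smooth point of the exceptional set and $\pi^*|\zeta|^2\sim|u|^2|v|^2$ near a node. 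In the logarithmic variables $\sigma=-\log|u|$, $\tau=-\log|v|$ the sets $\pi^{-1}(A_k)$ become $\{\sigma\in[e^k,e^{k+1}]\}$, respectively $\{\sigma+\tau\in[e^k,e^{k+1}]\}$, while $dV_u/|u|^2$ and $dV_u\,dV_v/(|u|^2|v|^2)$ become, up to constants and angular variables, $d\sigma$ and $d\sigma\,d\tau$, and $\pi^*\log|\zeta|\sim-\sigma$, respectively $-(\sigma+\tau)$; so the local contributions are bounded by $\int_{e^k}^{e^{k+1}}\sigma^{-2}\,d\sigma$ and $\int_{\{\sigma+\tau\in[e^k,e^{k+1}]\}}(\sigma+\tau)^{-2}\,d\sigma\,d\tau=\int_{e^k}^{e^{k+1}}u^{-1}\,du=1$, both bounded uniformly in $k$. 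Summing over the finitely many charts covering $\pi^{-1}(0)$ gives the required uniform bound, hence $(*)$.

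The main obstacle is precisely this uniform bound. One cannot simply dominate $\omega_X\w\dbar\mu_k\w\varphi$ by a fixed $L^1$-function: for $n\ge2$ the integral $\int_X|\omega_X|^2|\zeta|^{-2}|\log|\zeta||^{-2}\,dV_X$ already diverges (the nodes contribute $\int_{\{\sigma+\tau>c\}}(\sigma+\tau)^{-2}\,d\sigma\,d\tau=\int_c^\infty u^{-1}\,du=\infty$). What rescues the argument is that the mass over a single logarithmic shell $A_k$ stays bounded, and this is genuinely special to $A_n$: for $D_n$, $E_6$, $E_7$, $E_8$ some exceptional component has multiplicity $m\ge2$, and near a smooth point of it the corresponding local integral is of order $\int_{e^k/m}^{e^{k+1}/m}e^{(2m-2)\sigma}\sigma^{-2}\,d\sigma\to\infty$, consistently with the fact (Theorem~\ref{thm:examples2}) that $(*)$ fails for those singularities.
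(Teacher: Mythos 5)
Your overall strategy coincides with the paper's: reduce to $(0,1)$-forms with $\varphi\in L^2$, test $(*)$ against the cut-offs $\mu_k$, apply Cauchy--Schwarz to split off $\|\varphi\|_{L^2(A_k\cap X)}\to0$, and reduce everything to the uniform-in-$k$ bound $\sup_k\int_{A_k\cap X}|\omega_X|^2|\zeta|^{-2}|\log|\zeta||^{-2}\,dV_X<\infty$. Where you genuinely diverge is in proving that bound. The paper pulls back under the explicit $(n{+}1)$-fold branched covering $\C^2\to X$, $(s,t)\mapsto(s^{n+1},t^{n+1},st)$, computes $\pi^*dV_X$ and $\pi^*|\partial f|$ in closed form, and splits $\pi^{-1}(A_k)$ into a region where $|st|$ is comparable to $|\zeta|$ and its complement. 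You instead pull back under the minimal resolution, use crepancy to write $\pi^*(|\omega_X|^2dV_X)=|\omega_M|^2dV_M$ with $|\omega_M|$ bounded, and use that all multiplicities in the fundamental cycle are $1$ to get $\pi^*|\zeta|\sim|u|$, resp.\ $|uv|$, locally; your shell computations ($\int_{\{\sigma+\tau\in[e^k,e^{k+1}]\}}(\sigma+\tau)^{-2}\,d\sigma\,d\tau=1$, etc.) are correct. Your route is more structural and isolates exactly why $A_n$ is special (and why the same bound degenerates for $D_n$, $E_6$, $E_7$, $E_8$), but it leans on a fact the paper never needs: that $\mathfrak{m}\cdot\OO_M=\OO_M(-Z)$ is locally principal (Artin's theorem for rational surface singularities). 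The paper's multiplicity lemma only gives the generic order of vanishing of $\pi^*|\zeta|$ along each component, whereas you need the two-sided local comparison also at the nodes; supply that reference and your argument is complete.

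One reduction step is wrong, though harmlessly so. The claim that $\hat p(X)=2$ for every $A_n$ does not follow from Corollary~\ref{qXbound}, which only gives $q(X)\le4$, i.e., $\hat p(X)\ge2$ --- the inequality you need points the other way. In fact $q(A_n)=2+4/(n+1)<4$ for $n\ge2$ (the region $|s|\sim|t|$ in the branched covering shows $\omega_X\notin L^q$ for larger $q$), so $\hat p(X)=(4n+12)/(n+7)>2$ and the remark after Theorem~\ref{thm:main3} does not cover $2<p\le\hat p(X)$. Your proof survives because $L^p(X)\subset L^2(X)$ on the bounded set $X$ for $p\ge2$ and your Cauchy--Schwarz argument uses only $\varphi\in L^2$; replace the faulty reduction by this inclusion, which is also what the paper implicitly does.
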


\begin{proof}[Proof of Theorem \ref{thm:An}]
Note that $X$ has a branched $n+1$ to $1$ covering $\C^2 \rightarrow X$ given by $\pi(s,t) = (s^{n+1},t^{n+1},st)$.
If $\beta=\frac{i}{2}\sum d\zeta_j\wedge d\bar\zeta_j$ is the standard K\"ahler form on $\C^3$, so that $\beta^2|_X = dV_X$, then we obtain:
\begin{equation}\label{eq:omega-pullback}
    \pi^*\beta^2 = 2 (n+1)^2\big (|s|^{2n+2} + |t|^{2n+2} +
    (n+1)^2 |st|^{2n}\big ) dV(s,t).
\end{equation}
Recall that by \eqref{eq:structure-form2}, we can choose a representation
$\omega_X = \sum \omega_{i,j} ~d\zeta_i \wedge d\zeta_j$  of the structure form
such that $|\omega_{i,j}| \lesssim 1/|\partial f|$.
Since $\pi^*|\partial f|^2=|s|^{2n+2} + |t|^{2n+2} + (n+1)^2
|st|^{2n}$
we get from \eqref{eq:omega-pullback} that
\begin{equation}\label{eq:estimate01}
\pi^* |\omega_{i,j}|^2 \pi^* \beta^2 \lesssim dV(s,t).
\end{equation}

Let $\mu_k$ be cut-off functions as in Section ~\ref{ssec:cut-off} and
let $D_k = \{\zeta\in X: e^{-e^{k+1}} < |\zeta| < e^{-e^k} \}$. Then,
in view of \eqref{eq:cutoff2},
the integral in $(*)$ is a finite sum of integrals, which are
bounded by constants times integrals of the form
\begin{equation*}
\int_{D_k} |\omega_{i,j}| \frac{1}{|\zeta|\big |\log
      |\zeta|\big |} |\gamma| \beta^2\leq
\left (\int_{D_k} \frac{|\omega_{i,j}|^2 \beta^2}{|\zeta|^2
    \log^2|\zeta|} \right )^{1/2}
\left (\int_{D_k} |\gamma|^2 \beta^2 \right )^{1/2}
=: (I_{1,k})^{1/2} (I_{2,k})^{1/2}
\end{equation*}
where $\gamma \in L^2_{0,0}(X)$ and the inequality follows from the Cauchy-Schwarz inequality.
Note that $I_{2,k} \rightarrow 0$ for $k\rightarrow \infty$ by dominated convergence because $\gamma \in L^2_{0,0}(X)$
and the domain of integration shrinks to $0$. Therefore it is enough to show
that $I_{1,k}$ is uniformly bounded in ~$k$.

From \eqref{eq:estimate01} it follows that
\begin{equation}\label{eq:final}
I_{1,k} \lesssim \int_{\pi^{-1}(D_k)} \frac{dV(s,t)}{\big( |s|^{2n+2} + |t|^{2n+2} + |st|^2\big) \log^2\big( |s|^{2n+2} + |t|^{2n+2} + |st|^2\big)}.
\end{equation}
Let us decompose
\begin{equation*}
    \pi^{-1}(D_k) = \{ e^{-2e^{k+1}} \leq |s|^{2n+2} +
    |t|^{2n+2} + |st|^2 \leq e^{-2e^k} \}\subset \C^2
\end{equation*}
into $E_k := \pi^{-1}(D_k) \cap \{(1/2) e^{-2e^{k+1}} \leq |st|^2
  \leq e^{-2e^k} \}$ and $F_k:=\pi^{-1}(D_k) \setminus E_k.$
Note that $E_k\subset  E_k' := \{ (s,t) \mid e^{-e^{k+2}} \leq |st|
\leq e^{-e^k} \}$ because $2^{-1/2} e^{-e^{k+1}} >
e^{-e^{k+2}}$. Therefore
we obtain by \cite[Appendix B]{RDuke}:
\begin{equation*}
   \int_{E_k} \frac{dV(s,t)}{\big( |s|^{2n+2} + |t|^{2n+2} + |st|^2\big) \log^2\big( |s|^{2n+2} + |t|^{2n+2} + |st|^2\big)}
    \leq \int_{E_k'} \frac{dV(s,t)}{|st|^2 \log^2\big( |st|^2\big)} \leq C
\end{equation*}
uniformly in $k$.

Next note that on $F_k$, $|s|^{2n+2} + |t|^{2n+2} \geq |st|^2$.
Therefore if $(s,t)\in F_k$ satisfies $|s| \leq |t|$, then $|st|^2 \leq
2|t|^{2n+2}$, and so
\begin{eqnarray*}
e^{-2e^{k+1}} \leq &|s|^{2n+2} + |t|^{2n+2} + |st|^2& \leq 4 |t|^{2n+2},\\
|t|^{2n+2} \leq & |s|^{2n+2} + |t|^{2n+2} + |st|^2 &  \leq
                                                       e^{-2e^k},
\end{eqnarray*}
and $|s|^2 \leq 2|t|^{2n}$.
By symmetry we get that
\begin{equation*}
   F_k \subset  \{A_k\leq |s| \leq B_k, 0 \leq |t| \leq \sqrt 2|s|^n \}
    \cup \{A_k \leq |t| \leq B_k, 0 \leq |s| \leq \sqrt 2|t|^n \},
\end{equation*}
where $A_k =e^{-{e^{k+1}/(n+1)}}/2^{1/(n+1)}$ and
$B_k=e^{-e^{k}/(n+1)}$.
Now by integration in polar coordinates 
\begin{align*}
    &\int_{F_k} \frac{dV(s,t)}{\big( |s|^{2n+2} + |t|^{2n+2} + |st|^2\big) \log^2\big( |s|^{2n+2} + |t|^{2n+2} + |st|^2\big)} \\
    \lesssim  &\int_{A_k}^{B_k} \int_0^{\sqrt 2r_2^n} \frac{r_1 r_2 dr_1 dr_2}{r_2^{2n+2} \log^2(r_2)}
    = \int_{A_k}^{B_k}\frac{dr_2}{r_2 \log^2(r_2)} \to 0
\end{align*}
when $k \to \infty$ because the integrand is integrable over, say $[0,1/2]$.
Thus \eqref{eq:final} is uniformly bounded in $k$.
\end{proof}


\subsection{On the Euler characteristics of the structure sheaf} 

As a preparation for the proof of the existence of obstructions for solvability of the $\dq_w$-equation
at canonical singularities in the $L^2$-sense,
we need some observations on the behaviour of the Euler
characteristics of the structure sheaf
under resolution of singularities.

Let $\mathcal{F} \rightarrow X$ be a coherent analytic sheaf over a
compact complex space $X$ of pure dimension $n$, and let $\chi
(\mathcal F)$ be the Euler characteristic of $\mathcal F$,
\begin{equation*}
\chi\big(\mathcal{F}\big) := \sum_{j=0}^n (-1)^j \dim H^j\big(X,\mathcal{F}\big).
\end{equation*}
If $D$ is a divisor on $X$, associated to a line bundle $L \rightarrow X$,
then $\chi\big(\OO_X(D)\big)$ is the holomorphic Euler characteristic of $L$.

\begin{prop}\label{prop:examples}
Let  $\pi: M \rightarrow X$ be a resolution of singularities of a
compact surface $X$ with at most canonical singularities. Then
$
\chi(\OO_X) = \chi(\OO_M)$.
\end{prop}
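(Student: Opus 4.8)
The plan is to use the Leray spectral sequence (equivalently, the Grothendieck spectral sequence for composition of derived functors) associated to the resolution $\pi\colon M\to X$. The key input is that for a resolution of a surface with at most canonical (equivalently rational) singularities one has $\pi_*\OO_M=\OO_X$ and $R^j\pi_*\OO_M=0$ for $j>0$; the vanishing of the higher direct images is precisely the \emph{definition} of rational singularities, and du~Val singularities are rational. From this the Leray spectral sequence $H^p(X,R^q\pi_*\OO_M)\Rightarrow H^{p+q}(M,\OO_M)$ degenerates at $E_2$, giving isomorphisms $H^p(M,\OO_M)\cong H^p(X,\pi_*\OO_M)=H^p(X,\OO_X)$ for all $p$. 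Taking the alternating sum of dimensions then yields $\chi(\OO_M)=\chi(\OO_X)$.

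Concretely, first I would record that $X$ has only canonical surface singularities, hence only rational singularities (every canonical surface singularity is a rational double point), so that $\pi_*\OO_M=\OO_X$ and $R^q\pi_*\OO_M=0$ for $q\geq 1$ by the definition/standard characterization of rational singularities (citing, e.g., the references already invoked in the excerpt such as \cite{Ishii} or \cite{BPV}). Second, I would invoke the Leray spectral sequence for $\pi$ with coefficients in $\OO_M$; since all rows $q\geq 1$ vanish, it collapses and gives $H^p(M,\OO_M)\cong H^p(X,\OO_X)$ for every $p$. Since $X$ is a compact surface, only $p=0,1,2$ contribute and all these groups are finite-dimensional, so the Euler characteristics are defined. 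Third, I would conclude by taking the alternating sum
\begin{equation*}
\chi(\OO_M)=\sum_{p=0}^{2}(-1)^p\dim H^p(M,\OO_M)=\sum_{p=0}^{2}(-1)^p\dim H^p(X,\OO_X)=\chi(\OO_X).
\end{equation*}

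An alternative, perhaps more hands-on, route avoids spectral sequences: from $\pi_*\OO_M=\OO_X$ and $R^q\pi_*\OO_M=0$ for $q\geq 1$ one gets that the natural map $\OO_X\to R\pi_*\OO_M$ is an isomorphism in the derived category, so $R\Gamma(X,\OO_X)\cong R\Gamma(M,\OO_M)$ and the Euler characteristics agree; or one can argue directly that $\chi(\OO_M)=\sum_q(-1)^q\chi(X,R^q\pi_*\OO_M)=\chi(X,\pi_*\OO_M)=\chi(\OO_X)$, which is the ``additivity of Euler characteristic along $\pi$'' statement and holds because $\pi$ is proper.

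I do not expect a genuine obstacle here: the content is entirely the rationality of du~Val singularities together with a standard homological-algebra manipulation. The only point deserving a sentence of care is making sure the relevant cohomology groups are finite-dimensional so that the Euler characteristics make sense — this follows from compactness of $X$ and $M$ and coherence of the structure sheaves — and pinpointing a clean reference for ``canonical surface singularity $\Rightarrow$ rational singularity $\Rightarrow$ higher direct images of $\OO_M$ vanish.''
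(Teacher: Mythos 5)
Your argument is correct and coincides with the paper's own proof: both deduce $\pi_*\OO_M=\OO_X$ (from normality) and $R^q\pi_*\OO_M=0$ for $q>0$ (from rationality of canonical surface singularities), then apply the degenerate Leray spectral sequence to get $H^k(X,\OO_X)\cong H^k(M,\OO_M)$ for all $k$ and hence equality of Euler characteristics.
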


\begin{proof}
Since $X$ is a normal space, $\pi_* \OO_M=\OO_X$.
Moreover, canonical singularities are rational so that
$R^k \pi_* \OO_M = 0$ for $ k>0$.
Hence, the Leray spectral sequence gives $H^k(X,\OO_X)  \cong
H^k(M,\OO_M)$ for $k\geq 0$.
\end{proof}

If we assume that the $\dq_w$-equation is locally solvable in the $L^2$-sense, then we obtain another representation
of $\chi(\OO_X)$ for arbitrary normal complex surfaces.

\begin{thm}\label{thm:examples}
Let $X$ be a compact normal complex surface, $\pi: M\rightarrow X$ a resolution of singularities
with only normal crossings, $Z:=\pi^{-1}(X_{sing})$ the unreduced exceptional divisor
and $E:=|Z|$ the exceptional divisor.
If the $\dq_w$-equation is locally solvable in the $L^2$-sense
for $(0,1)$-forms, then
\begin{equation}\label{eq:chi2}
\chi\big(\OO_X\big) =\chi\big( \OO_M(Z-E) \big).
\end{equation}
\end{thm}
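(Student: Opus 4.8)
The plan is to compute $\chi(\OO_M)$ by a different route than Proposition~\ref{prop:examples} — namely via an $L^2$-Dolbeault complex on $X$ that computes the cohomology of $\OO_M(Z-E)$ — and then invoke Proposition~\ref{prop:examples} to identify $\chi(\OO_M)$ with $\chi(\OO_X)$. The starting point is the sheaf-theoretic fact that, on a normal surface with a resolution $\pi\colon M\to X$ having normal crossing exceptional divisor, the pushforward $\pi_*\OO_M(Z-E)$ is related to the sheaf of $L^2$-holomorphic (or $\dbar_w$-closed $L^2$) functions on $X$; more precisely, one expects $R^k\pi_*\OO_M(Z-E)$ to vanish for $k>0$ (so that $\chi(\OO_M(Z-E))=\chi(X,\pi_*\OO_M(Z-E))$ by Leray), and $\pi_*\OO_M(Z-E)$ to be the sheaf $\mathcal{L}^2\OO_X$ of $L^2$ functions that are holomorphic on $X_{reg}$. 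This kind of identification of $\pi_*\OO_M(Z-E)$ with an $L^2$-sheaf is exactly the sort of statement established in the literature on $L^2$-cohomology of singular surfaces (e.g. the Pardon--Stern / Ruppenthal circle of ideas); I would cite it rather than reprove it.

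The next step is to represent $H^k(X,\pi_*\OO_M(Z-E))$ by a global $L^2$-Dolbeault complex on $X$. Cover $X$ and use a partition of unity to glue local solutions; the hypothesis that the $\dbar_w$-equation is locally solvable in $L^2$ for $(0,1)$-forms is precisely what is needed to show that the complex of sheaves $0\to\mathcal{L}^2\OO_X\to \mathcal{L}^2_{0,0}\xrightarrow{\dbar_w}\mathcal{L}^2_{0,1}\xrightarrow{\dbar_w}\mathcal{L}^2_{0,2}\to 0$ (sheaves of $L^2_{loc}$ $(0,\rr)$-forms with $\dbar_w\varphi\in L^2_{loc}$) is exact, i.e. a fine resolution of $\mathcal{L}^2\OO_X$. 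Indeed exactness at the $(0,0)$ and $(0,1)$ spots is the local solvability hypothesis (for $(0,1)$-forms it is assumed; for $(0,2)$-forms on a surface, $\dbar_w$ is always locally solvable in $L^2$ — this is the top-degree case and follows e.g.\ from Theorem~\ref{thmA}(ii) applied to $(0,2)$-forms, valid for $p(X)<p\le\infty$ hence for $p=2$), and exactness at $(0,0)$ is just the statement that a $\dbar_w$-closed $L^2$ function is in $\mathcal{L}^2\OO_X$. Hence $H^k(X,\mathcal{L}^2\OO_X)$ is the $k$-th $L^2$-$\dbar_w$-cohomology of $X$, and its Euler characteristic equals $\chi(\OO_M(Z-E))$.

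Finally, one compares with $\chi(\OO_X)$. By Proposition~\ref{prop:examples}, $\chi(\OO_X)=\chi(\OO_M)$, so the claim \eqref{eq:chi2} is equivalent to $\chi(\OO_M)=\chi(\OO_M(Z-E))$. Since $X$ has canonical singularities, each $R^k\pi_*\OO_M=0$ for $k>0$ and $\pi_*\OO_M=\OO_X$; one needs the analogous vanishing $R^k\pi_*\OO_M(Z-E)=0$ for $k>0$ together with $\pi_*\OO_M(Z-E)=\mathcal{L}^2\OO_X$, and then the desired equality reduces to $\chi(X,\OO_X)=\chi(X,\mathcal{L}^2\OO_X)$, i.e.\ to showing the two sheaves have the same Euler characteristic on the compact surface $X$. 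Because $\mathcal{L}^2\OO_X$ and $\OO_X$ agree on $X_{reg}$ and differ only at the finitely many singular points, $\chi(X,\mathcal{L}^2\OO_X)-\chi(X,\OO_X)$ is a sum of local contributions $\dim(\mathcal{L}^2\OO_{X,x}/\OO_{X,x})$ (a skyscraper quotient), and for a normal — hence, here, canonical, in particular rational — singularity this quotient vanishes, because bounded holomorphic functions on $X_{reg}$ near a normal point extend holomorphically across it. This gives $\chi(X,\mathcal{L}^2\OO_X)=\chi(\OO_X)$ and hence \eqref{eq:chi2}.

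The main obstacle is the sheaf-cohomological input, specifically the identification $\pi_*\OO_M(Z-E)\cong \mathcal{L}^2\OO_X$ together with the higher-direct-image vanishing and the fact that this $L^2$-sheaf is computed by the fine $L^2$-$\dbar_w$ resolution; the rest is bookkeeping with Euler characteristics and the already-established Proposition~\ref{prop:examples}. I would isolate that identification as the one nontrivial lemma, proving it by a local computation on $M$ (normal crossing coordinates, where $Z-E$ corresponds to multiplicities $m_i-1$ and the $L^2$ condition on $X$ pulls back to a weight $\prod|s_i|^{2(m_i-1)}$, exactly matching sections of $\OO_M(Z-E)$) or, preferably, by citing the existing literature on $L^2$-cohomology of normal surface singularities.
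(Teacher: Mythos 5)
There is a genuine gap, and it sits exactly where you isolated your ``one nontrivial lemma''. You compute $\chi(\OO_M(Z-E))$ via the Leray spectral sequence for $\pi$, using $\pi_*\OO_M(Z-E)\cong\mathcal{L}^2\OO_X$ (which equals $\OO_X$ by normality) \emph{together with} $R^k\pi_*\OO_M(Z-E)=0$ for $k>0$. The $\pi_*$-identification is fine, but the higher direct image vanishing is false in general and cannot be cited as an unconditional fact: the stalk $(R^1\pi_*\OO_M(Z-E))_x$ is precisely the local obstruction to solving $\dbar_w$ in $L^2$ for $(0,1)$-forms at $x$. If your chain of identifications held unconditionally, then \eqref{eq:chi2} would hold without any hypothesis, contradicting Theorem~\ref{thm:examples2}: for $D_n$, $E_6$, $E_7$, $E_8$ one has $\chi(\OO_X)=\chi(\OO_M)=\chi(\OO_M(Z-E))+1$. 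So the vanishing of $R^{>0}\pi_*\OO_M(Z-E)$ is essentially equivalent to the hypothesis being used, and your proposed local computation in normal-crossing coordinates (matching the $L^2$-weight $\prod|s_i|^{2(m_i-1)}$) only addresses the $k=0$ part. Note also that, as written, the solvability hypothesis enters only in building the fine resolution, which then plays no role in your Euler-characteristic computation; the logical weight is carried entirely by the unproved (and in general false) $R^{>0}$ vanishing.

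The paper's route avoids this. The hypothesis makes $0\to\OO_X\to\mathcal{C}_{0,0}\to\mathcal{C}_{0,1}\to\mathcal{C}_{0,2}\to0$, with $\mathcal{C}_{0,\rr}=L^{2,loc}_{0,\rr}\cap\Dom\dbar_w$, a fine resolution of $\OO_X$ itself (exactness at $\mathcal{C}_{0,0}$ by normality, at $\mathcal{C}_{0,1}$ by hypothesis, at $\mathcal{C}_{0,2}$ by \cite[Theorem~4.3]{OR}), so $H^k(X,\OO_X)$ is computed by the global $L^2$-$\dbar_w$-complex; the latter cohomology is then identified with $H^k(M,\OO_M(Z-E))$ by the \emph{unconditional global} comparison theorem \cite[Theorem~1.13]{RDuke}. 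That comparison theorem is the ingredient your argument is missing — or rather, is implicitly assuming in a sheafified form that is too strong to be true. Two smaller points: the theorem concerns arbitrary compact normal surfaces, so you may not invoke Proposition~\ref{prop:examples} or Theorem~\ref{thmA}(ii), both of which require canonical singularities — the detour through $\chi(\OO_M)$ is both unjustified here and unnecessary; and exactness in top degree should be taken from \cite{OR} (or \cite{S}), which hold for general isolated singularities.
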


\begin{proof}
Following \cite[Section~2.1]{RDuke}, let $\mathcal C_{0,\rr}$ denote the
fine sheaves $L_{0,\rr}^{2, loc}\cap \Dom \dbar_w$ and consider the sheaf complex
\begin{equation}\label{eq:examples}
0 \rightarrow \OO_X \longrightarrow \mathcal{C}_{0,0} \overset{\dq_w}{\longrightarrow} \mathcal{C}_{0,1} \overset{\dq_w}{\longrightarrow}
\mathcal{C}_{0,2} \rightarrow 0.
\end{equation}
It is easy to see that \eqref{eq:examples} is exact at $\mathcal{C}_{0,0}$
because $X$ is normal; a germ $f \in \ker\dq_w \subset \mathcal{C}_{0,0}$ is a holomorphic
function on the regular locus of $X$, and so it is also strongly holomorphic by normality.
Moreover, \eqref{eq:examples}
is exact at $\mathcal{C}_{0,2}$, see \cite[Theorem~4.3]{OR}.
(It is usually not difficult to solve $\dq$-equations in the highest degree, see also \cite{S}.)
In general, \eqref{eq:examples} is not necessarily exact at $\mathcal{C}_{0,1}$,
but here, we assume that this is the case.
Thus \eqref{eq:examples} is a fine resolution of $\OO_X$;  in
particular $H^k(X,\OO_X)= H^k(\Gamma(X,\mathcal{C}_{0,\bullet}))$.
By \cite[Theorem~1.13]{RDuke}
$H^k(\Gamma(X,\mathcal{C}_{0,\bullet}))= H^k( M, \OO_M(Z-E))$, and so
\[
H^k(X,\OO_X)=H^k( M, \OO_M(Z-E)),
\]
which proves \eqref{eq:chi2}.
\end{proof}

Combining Proposition ~\ref{prop:examples}
and Theorem ~\ref{thm:examples} we get: 

\begin{cor}\label{cor:examples}
Let $X$ be a compact complex surface with at most canonical singularities. If the $\dq_w$-equation is locally solvable
in the $L^2$-sense for $(0,1)$-forms on $X$, then
\begin{equation}\label{eq:agenus}
\chi \big(\OO_M) = \chi\big( \OO_M(Z-E)\big)
\end{equation}
for any resolution of singularities $\pi: M\rightarrow X$ with only normal crossings.
\end{cor}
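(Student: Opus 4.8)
The plan is to obtain \eqref{eq:agenus} by simply chaining the two results that immediately precede the corollary, after checking that the hypotheses match up. Recall that Corollary~\ref{cor:examples} assumes $X$ is a compact complex surface with at most canonical singularities and that the $\dq_w$-equation is locally solvable in the $L^2$-sense for $(0,1)$-forms on $X$; it asserts \eqref{eq:agenus} for \emph{any} resolution $\pi\colon M\to X$ with only normal crossings. So first I would fix such a resolution $\pi$, and set $Z:=\pi^{-1}(X_{sing})$ (the unreduced exceptional divisor) and $E:=|Z|$, so that the notation agrees with that of Theorem~\ref{thm:examples}.

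Next, since $X$ has at most canonical singularities, it in particular has rational (hence, on a surface, normal) singularities, so Proposition~\ref{prop:examples} applies and gives $\chi(\OO_X)=\chi(\OO_M)$. On the other hand, the solvability hypothesis is precisely the hypothesis of Theorem~\ref{thm:examples}, which then yields $\chi(\OO_X)=\chi(\OO_M(Z-E))$. Combining the two equalities gives
\begin{equation*}
\chi(\OO_M)=\chi(\OO_X)=\chi(\OO_M(Z-E)),
\end{equation*}
which is \eqref{eq:agenus}, and since $\pi$ was an arbitrary normal crossings resolution, the proof is complete.

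There is essentially no obstacle here: the only point requiring a word of justification is that a canonical surface singularity is a \emph{normal crossings–resolvable} singularity for which Proposition~\ref{prop:examples} is available — i.e. that canonical implies rational and that $X$ is normal — but this is standard (and is already invoked, for instance, in the proof of Proposition~\ref{prop:examples} itself). One should also note that Proposition~\ref{prop:examples} is stated for an arbitrary resolution of singularities, so in particular it applies to a normal crossings resolution; thus the same $\pi$ can be fed to both Proposition~\ref{prop:examples} and Theorem~\ref{thm:examples}. With that remark in place, the corollary is a two-line deduction.
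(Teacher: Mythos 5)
Your argument is exactly the paper's: the corollary is obtained by chaining Proposition~\ref{prop:examples} (valid since canonical singularities are rational, hence $X$ is normal) with Theorem~\ref{thm:examples} applied to the same normal crossings resolution. The proposal is correct and matches the intended two-line deduction.
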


So, if we are looking for obstructions to solvability of the $\dq_w$-equation in the $L^2$-sense
at canonical singularities, we just need to find configurations violating \eqref{eq:agenus}.

\subsection{Obstructions for $\dq_w$ at canonical singularities}
\label{ssec:obstructions}

\begin{thm}\label{thm:examples2}
There exist obstructions to local solvability of the $\dq_w$-equation in the $L^2$-sense for $(0,1)$-forms at singularities of type
$D_n$, $n\geq 4$, $E_6$, $E_7$ and $E_8$.
\end{thm}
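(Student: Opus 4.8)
The plan is to use Corollary~\ref{cor:examples} as the criterion: to show that there is an obstruction for a singularity of type $D_n$, $E_6$, $E_7$ or $E_8$, it suffices to exhibit a compact surface $X$ having exactly one singularity of the given type (and no other singularities) such that the equality \eqref{eq:agenus} fails for the minimal resolution $\pi\colon M\to X$. Since the minimal resolution of an ADE singularity already has normal crossings exceptional divisor (a chain/tree of $(-2)$-curves), I would take $\pi$ to be the minimal resolution. Because \eqref{eq:agenus} is a statement about the exceptional divisor only and $\chi$ is locally computable in the sense that $\chi(\OO_M(Z-E))-\chi(\OO_M)$ depends only on a neighborhood of $E$, the choice of the global surface $X$ is essentially irrelevant; what matters is the local resolution graph. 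So the computation reduces to a purely combinatorial one on the dual graph of the exceptional divisor.

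\smallskip
The key computation is the following. By Riemann--Roch on the smooth surface $M$,
\begin{equation*}
\chi(\OO_M(Z-E)) - \chi(\OO_M) = \frac{1}{2}(Z-E)\cdot(Z-E-K_M).
\end{equation*}
Since the resolution of a canonical (hence crepant-resolvable) ADE singularity has $K_M = \pi^* K_X$ trivial in a neighborhood of $E$, we have $K_M\cdot E_i = 0$ for every exceptional component, so $K_M\cdot(Z-E)=0$ and the difference equals $\tfrac12 (Z-E)^2 = \tfrac12\big(Z^2 - 2Z\cdot E + E^2\big)$. Now $Z$ is the \emph{fundamental cycle} of the singularity (the pullback cycle with $Z\cdot E_i \le 0$ for all $i$); for a rational double point one has $Z^2 = -2$ (equivalently $p_a(Z)=0$ and $Z\cdot K=0$), which gives the well-known fact that these are exactly the rational double points. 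Writing $Z = \sum m_i E_i$ with the multiplicities $m_i$ read off from the ADE Dynkin diagram (all equal to $1$ for $A_n$, and with a maximal multiplicity $m>1$ in all the other cases, e.g. $m=2$ for $D_n$, $m=3$ for $E_6$, etc.), one computes $Z^2 - 2Z\cdot E + E^2$ directly from the intersection matrix ($E_i^2=-2$, $E_i\cdot E_j\in\{0,1\}$ according to the graph). The point is that this quantity is nonzero precisely when some $m_i>1$: for $A_n$ all multiplicities are $1$ and one gets $(Z-E)^2=0$, consistent with Theorem~\ref{thm:An}, whereas for $D_n$, $E_6$, $E_7$, $E_8$ it is strictly negative, so \eqref{eq:agenus} fails and there is an obstruction.

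\smallskip
So the steps, in order, are: (1) pick for each type a compact surface $X$ with a single isolated singularity of that type --- this is a standard existence statement, e.g. suitable weighted hypersurfaces or quotients, and one only needs \emph{some} such $X$; (2) invoke Corollary~\ref{cor:examples}: local solvability of $\dbar_w$ in $L^2$ for $(0,1)$-forms on such an $X$ would force $\chi(\OO_M)=\chi(\OO_M(Z-E))$; (3) compute $\chi(\OO_M(Z-E))-\chi(\OO_M) = \tfrac12(Z-E)^2$ via Riemann--Roch together with crepancy $K_M\cdot E_i=0$; (4) evaluate $(Z-E)^2$ combinatorially from the ADE graph and the known multiplicities $m_i$ of the fundamental cycle, and observe it is $0$ for $A_n$ but negative for $D_n$, $E_6$, $E_7$, $E_8$; (5) conclude that solvability fails, i.e. there is an obstruction, and moreover --- by Theorem~\ref{thmA}(iii) and the necessity of $(*)$ --- the corresponding obstructing $(0,1)$-forms violate the condition $(*)$. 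The main obstacle is really bookkeeping in step (4): one must be careful that $\chi$ is being computed with respect to a global surface while the difference is local, and one must correctly identify $Z$ as the fundamental (not the reduced) cycle and use $Z^2=-2$; once that is set up, the numerics are short and can be tabulated per type, as is done in the next subsections.
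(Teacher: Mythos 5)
Your proposal is correct and follows essentially the same route as the paper: invoke Corollary~\ref{cor:examples}, apply Riemann--Roch on $M$ together with $K\cdot E_i=0$ near the exceptional set, and compute $(Z-E)^2$ combinatorially from the Dynkin diagram with the fundamental-cycle multiplicities (the paper finds $(Z-E)^2=-2$ in each non-$A_n$ case, citing Pardon for $D_n$ and tabulating $E_6,E_7,E_8$ directly). The only difference is presentational: you handle all types uniformly and make explicit the locality of $\chi(\OO_M(Z-E))-\chi(\OO_M)$, which the paper leaves implicit.
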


Hence, $(*)$ does not hold for all $\varphi \in \ker\dq_w \subset L^2_{0,1}$ at such singularities.

\begin{proof}
Let $X$ be a projective variety with a single singularity of one of the types above,
and $\pi: M \rightarrow X$ a resolution of singularities with only
normal crossings.
In view of the discussion above it suffices to show that
\eqref{eq:agenus} does not hold.
For the $D_n$-singularities, $n\geq4$, this was proved in the proof of
Theorem~4.8 in \cite{P} using the Riemann-Roch formula for regular complex surfaces
\begin{equation*}
\chi\big(\OO_M(Z-E)\big) = \chi\big( \OO_M\big) + \frac{1}{2} \big(
(Z-E)\cdot(Z-E) - (Z-E)\cdot K\big),
\end{equation*}
where $K$ is the canonical divisor on $M$.
Since $\OO(K)$ is trivial on a neighborhood of the exceptional set, $Z_j\cdot K=0$
for any irreducible component $Z_j$ of the exceptional set, cf.\
\cite[page~135]{Du}, and thus $(Z-E)\cdot K =0$. Pardon proved that
$(Z-E)\cdot (Z-E)=-2$
so that
\begin{equation}\label{eq:formula-chi}
\chi \big(\OO_M) = \chi\big( \OO_M(Z-E)\big) + 1
\end{equation}
and in particular \eqref{eq:agenus} does not hold.

For the remaining singularities, $E_6$, $E_7$ and $E_8$, we proceed analogously to \cite{P}
and show 
that \eqref{eq:formula-chi} holds also for these singularities.
Now let $\pi: M \rightarrow X$ be the minimal resolution of $X$. Then the
exceptional divisor $Z$ has normal crossings and the irreducible
components $E_j$ have self-intersection $-2$ and pairwise
intersections according to the Dynkin diagrams of $E_6$, $E_7$ or
$E_8$, see, e.g., \cite{Du}.
The labels of the nodes in the following diagrams are the multiplicities of the
corresponding divisors in the unreduced fundamental cycle $Z$, cf., e.g.,
\cite[Example~7.2.5]{Ishii} and \cite[Proposition~3.8]{BPV}.

\begin{center}
\begin{tikzpicture}
    \filldraw (0,4) circle [radius=2pt] node[above] {$1$};
    \filldraw (1,4) circle [radius=2pt] node[above] {$2$}; 
    \filldraw (2,4) circle [radius=2pt] node[above] {$3$}; 
    \filldraw (3,4) circle [radius=2pt] node[above] {$2$};
    \filldraw (4,4) circle [radius=2pt] node[above] {$1$};
    \filldraw (2,3) circle [radius=2pt] node[left] {$2$};
    \draw (0,4) -- (1,4);
    \draw (1,4) -- (2,4);
    \draw (2,4) -- (3,4);
    \draw (3,4) -- (4,4);
    \draw (2,4) -- (2,3);

    \draw (2,5) node {$E_6$};

    \filldraw ( 6,4) circle [radius=2pt] node[above] {$2$};
    \filldraw ( 7,4) circle [radius=2pt] node[above] {$3$};
    \filldraw ( 8,4) circle [radius=2pt] node[above] {$4$};
    \filldraw ( 9,4) circle [radius=2pt] node[above] {$3$};
    \filldraw (10,4) circle [radius=2pt] node[above] {$2$};
    \filldraw (11,4) circle [radius=2pt] node[above] {$1$};
    \filldraw (8,3) circle [radius=2pt] node[left] {$2$};
    \draw ( 6,4) -- ( 7,4);
    \draw ( 7,4) -- ( 8,4);
    \draw ( 8,4) -- ( 9,4);
    \draw ( 9,4) -- (10,4);
    \draw (10,4) -- (11,4);
    \draw (8,4) -- (8,3);

    \draw (8.5,5) node {$E_7$};

    \filldraw (2.5,1) circle [radius=2pt] node[above] {$2$};
    \filldraw (3.5,1) circle [radius=2pt] node[above] {$4$};
    \filldraw (4.5,1) circle [radius=2pt] node[above] {$6$};
    \filldraw (5.5,1) circle [radius=2pt] node[above] {$5$};
    \filldraw (6.5,1) circle [radius=2pt] node[above] {$4$};
    \filldraw (7.5,1) circle [radius=2pt] node[above] {$3$};
    \filldraw (8.5,1) circle [radius=2pt] node[above] {$2$};
    \filldraw (4.5,0) circle [radius=2pt] node[left] {$3$};
    \draw (2.5,1) -- (3.5,1);
    \draw (3.5,1) -- (4.5,1);
    \draw (4.5,1) -- (5.5,1);
    \draw (5.5,1) -- (6.5,1);
    \draw (6.5,1) -- (7.5,1);
    \draw (7.5,1) -- (8.5,1);
    \draw (4.5,1) -- (4.5,0);

    \draw (5.5,2) node {$E_8$};
\end{tikzpicture}
\end{center}
%
This means that in the case of the $E_6$-singularity,
we can label the irreducible components of $Z$ so that $Z-E=2Z_0 + Z_1 +
Z_2 + Z_3$ and $Z_\nu^2=-2$, $Z_0 \cdot Z_\mu=1$ if $\mu\geq 1$,
and $Z_\nu\cdot Z_\mu=0$ if $\mu>\nu\geq 1$.
For the $E_7$-singularity we can label the irreducible components of
$Z$ so that $Z-E=3Z_0 + 2Z_1 + Z_2 + Z_3+ 2Z_4+Z_5$ and
$Z_\nu^2=-2$ for all $\nu$, $Z_0 \cdot Z_1 = Z_0 \cdot Z_3 = Z_0 \cdot
Z_4=Z_1 \cdot Z_2=Z_4 \cdot Z_5=1$, and $Z_\nu \cdot Z_\mu=0$ for all
other combinations of $\nu\neq \mu$.
Finally, for the $E_8$-singularity, we have $Z-E=
5Z_0 + 3Z_1 + Z_2 + 2Z_3+ 4Z_4+3Z_5 + 2Z_6 + Z_7$
and $Z_\nu^2=-2$ for all $\nu$, $Z_0 \cdot Z_1 = Z_0 \cdot Z_3 = Z_0
\cdot Z_4=Z_1 \cdot Z_2=Z_4 \cdot Z_5= Z_5 \cdot Z_6 = Z_6\cdot
Z_7=1$, and $Z_\nu \cdot Z_\mu=0$ for all other combinations of
$\nu\neq \mu$.
In all three cases a computation yields that $(Z-E)\cdot (Z-E)=-2$,
which implies \eqref{eq:formula-chi}.
\end{proof}

\section{Appendix -- Integral estimates on analytic varieties} \label{sect:basic-estimates}

In this section we recall for convenience of the reader some basic integral estimates
for analytic varieties from \cite{LR2}.
Let $i\colon X\to \Omega'\subset \C^N$ be an analytic variety of pure dimension $n$.
We consider $X$ as a Hermitian complex space with the restriction of the standard metric from $\C^N$,
i.e., $X^*:=X_{reg}$ of $X$ carries the induced Hermitian metric.
With respect to the volume element induced by this metric,  $X_{sing}$ is a null set,
and we denote by $dV_X$ the extension to $X$ of the volume element on $X^*$.
Let $B_r(z)$ be the ball of radius $r>0$ centered at the point $z\in\C^N$.
The results below are all consequences of Lemma~2.1 in \cite{LR2} which asserts
that radial integrals on $X$ behave like in $\C^n$, which in turn
follows from the fact that the volume of a ball $X\cap B_r(z)$ is $\sim
r^{2n}$, cf.\ \cite[Consequence~III.5.8]{De}.


\begin{lem}[\cite{LR2}, Lemma~2.2]\label{lem:estimate2}
Let $X\subset \C^N$ be an analytic variety of pure dimension $n$,
$K\subset X$ a compact subset and $R>0$. Let $\alpha\geq 0$.
Then there exists a constant $C>0$ such that the following holds:
\begin{eqnarray*}
 \int_{X \cap \left(B_{r_2}(z)\setminus \o{B_{r_1}(z)}\right)}
\frac{dV_X(\zeta)}{|\zeta-z|^\alpha} \leq C \left\{
\begin{array}{ll}
r_2^{2n-\alpha} & \ ,\ \alpha<2n,\\
1+|\log r_1| & \ ,\ \alpha=2n,\\
r_1^{2n-\alpha} & \ ,\ \alpha>2n,
\end{array}\right.
\end{eqnarray*}
for all $z\in K$ and $0<r_1 \leq r_2 \leq R$.
\end{lem}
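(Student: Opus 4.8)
The plan is to deduce this from the principle, recorded as Lemma~2.1 in \cite{LR2}, that radial integrals on $X$ are controlled by the corresponding radial integrals on $\C^n$; concretely there is a constant $C=C(K,R)$ such that for every nonnegative measurable $g$ on $(0,R]$ and every $z\in K$,
\begin{equation*}
  \int_{X\cap B_R(z)}g\big(|\zeta-z|\big)\,dV_X(\zeta)\le C\int_0^R g(\rho)\,\rho^{2n-1}\,d\rho,
\end{equation*}
which in turn rests on the Euclidean-type volume growth $\mathrm{vol}_X\big(X\cap B_\rho(z)\big)\le C_0\rho^{2n}$ from \cite[Consequence~III.5.8]{De} (uniformity over the compact set $K$ obtained by covering it with finitely many local charts). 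Applying this with the radial weight $g(\rho)=\rho^{-\alpha}\mathbf{1}_{[r_1,r_2]}(\rho)$ immediately gives
\begin{equation*}
  \int_{X\cap(B_{r_2}(z)\setminus\overline{B_{r_1}(z)})}\frac{dV_X(\zeta)}{|\zeta-z|^\alpha}\le C\int_{r_1}^{r_2}\rho^{2n-\alpha-1}\,d\rho,
\end{equation*}
so that the whole problem is reduced to this elementary one-variable integral. (Alternatively one can avoid quoting Lemma~2.1 and argue directly: writing $\mu(\rho):=\mathrm{vol}_X\big(X\cap B_\rho(z)\big)$, the layer-cake formula turns the left-hand side into $\int_{r_1}^{r_2}\rho^{-\alpha}\,d\mu(\rho)$, and integrating by parts while using $0\le\mu(\rho)\le C_0\rho^{2n}$ produces the same bound with a boundary term $\le C_0 r_2^{2n-\alpha}$.)

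It then remains to estimate $\int_{r_1}^{r_2}\rho^{2n-\alpha-1}\,d\rho$ in the three regimes. If $\alpha<2n$ the exponent $2n-\alpha-1$ may be negative but the antiderivative $\rho^{2n-\alpha}/(2n-\alpha)$ is increasing, so the integral is at most $(2n-\alpha)^{-1}r_2^{2n-\alpha}$, which is the first line. If $\alpha=2n$ the integral equals $\log(r_2/r_1)\le\log R+|\log r_1|\lesssim 1+|\log r_1|$, the second line. If $\alpha>2n$ the antiderivative $\rho^{2n-\alpha}/(2n-\alpha)$ is negative and increasing, hence $\int_{r_1}^{r_2}\rho^{2n-\alpha-1}\,d\rho\le(\alpha-2n)^{-1}r_1^{2n-\alpha}$, the third line. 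In the alternative direct argument one checks in addition that the boundary term is absorbed in each case: for $\alpha<2n$ it is $\le C_0 r_2^{2n-\alpha}$, for $\alpha=2n$ it is the constant $C_0\le C_0(1+|\log r_1|)$, and for $\alpha>2n$ it is $\le C_0 r_2^{2n-\alpha}\le C_0 r_1^{2n-\alpha}$ since $r_1\le r_2$. Absorbing the various constants, all depending only on $n$, $\alpha$, $K$ and $R$, into one constant $C$ finishes the proof.

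The argument is essentially routine once the volume-growth input is available; the only thing to watch is that the constant be uniform in both $z\in K$ and $0<r_1\le r_2\le R$, which is why the bookkeeping is arranged so that $r_1$ enters only through the cases $\alpha\ge 2n$ and otherwise only via the trivial inequality $r_1\le r_2$. Indeed, as the statement is quoted verbatim from \cite[Lemma~2.2]{LR2}, one may simply refer to that paper; the above records the short proof.
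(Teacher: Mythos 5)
Your proof is correct and follows exactly the route the paper indicates for this appendix lemma: reduce to a radial integral via \cite[Lemma~2.1]{LR2} (equivalently, the volume growth $\mathrm{vol}(X\cap B_\rho(z))\lesssim \rho^{2n}$ from \cite[Consequence~III.5.8]{De}) and then evaluate the elementary one-variable integral in the three regimes. The case analysis and the handling of the boundary term in your alternative layer-cake argument are both accurate, so there is nothing to add.
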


\begin{lem}[\cite{LR2}, Lemma 2.3]\label{lem:integral-log}
    Let $X$ and $K$ be as in Lemma~\ref{lem:estimate2}.
    Then
    \begin{eqnarray*}
        \int_{X\cap B_{1/2}(z)} \frac{dV_X(\zeta)}{|\zeta-z|^{2n} \log^2|\zeta-z|} &\lesssim& 1,  \quad  z\in K.
    \end{eqnarray*}
\end{lem}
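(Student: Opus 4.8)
The plan is to reduce the integral to a convergent one-dimensional radial integral. The integrand depends only on $\rho=|\zeta-z|$, and on the punctured ball $0<\rho<1/2$ it is essentially constant on dyadic shells, so I would decompose $X\cap B_{1/2}(z)$, up to the null set $\{\zeta=z\}$, into the shells
\[
A_j(z):=\bigl\{\,\zeta\in X:\ 2^{-j-1}\le |\zeta-z|<2^{-j}\,\bigr\},\qquad j\ge 1 .
\]
On $A_j(z)$ one has $|\zeta-z|^{2n}\sim 2^{-2nj}$ and $\bigl|\log|\zeta-z|\bigr|\sim j$, with constants independent of $j$ and of $z$; note in particular that $\log^2|\zeta-z|$ stays bounded away from $0$ on all of $B_{1/2}(z)$, so the only genuine singularity of the integrand is the factor $|\zeta-z|^{-2n}$ near $\zeta=z$.

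Next I would bound the size of each shell by Lemma~\ref{lem:estimate2} applied with $\alpha=0$ (and $R=1/2$, with $K$ the given compact set): this yields $\int_{A_j(z)}dV_X\le C\,2^{-2nj}$ for all $z\in K$, with $C$ depending only on $X$ and $K$. Combining this with the pointwise bounds on the shell,
\[
\int_{A_j(z)}\frac{dV_X(\zeta)}{|\zeta-z|^{2n}\log^2|\zeta-z|}\ \lesssim\ \frac{1}{2^{-2nj}\,j^{2}}\cdot 2^{-2nj}\ =\ \frac{1}{j^{2}},
\]
uniformly in $z\in K$. Summing over $j\ge 1$ and using $\sum_{j\ge 1}j^{-2}<\infty$ then gives
\[
\int_{X\cap B_{1/2}(z)}\frac{dV_X(\zeta)}{|\zeta-z|^{2n}\log^2|\zeta-z|}\ =\ \sum_{j\ge 1}\ \int_{A_j(z)}\frac{dV_X(\zeta)}{|\zeta-z|^{2n}\log^2|\zeta-z|}\ \lesssim\ 1,
\]
with the implied constant independent of $z\in K$, which is the assertion. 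As an alternative one could invoke Lemma~2.1 of \cite{LR2} directly to replace the left-hand side by the honest radial integral $\int_0^{1/2}\rho^{2n-1}\cdot\rho^{-2n}(\log\rho)^{-2}\,d\rho=\int_0^{1/2}\frac{d\rho}{\rho(\log\rho)^2}$, whose finiteness is immediate from the substitution $u=\log\rho$ (it equals $1/\log 2$).

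I do not expect any real obstacle here: once the volume estimate of Lemma~\ref{lem:estimate2} is available the argument is routine, and the only point to watch --- uniformity of the constant over $z\in K$ --- is already built into that lemma, so it comes for free.
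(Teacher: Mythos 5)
Your argument is correct and is essentially the route the paper indicates: the lemma is quoted from \cite{LR2} and is there deduced from the fact that radial integrals on $X$ behave as in $\C^n$, which rests on the volume bound $\mathrm{vol}(X\cap B_r(z))\lesssim r^{2n}$ uniformly for $z\in K$ --- exactly the input you extract from Lemma~\ref{lem:estimate2} with $\alpha=0$ and exploit via the dyadic shells (your closing remark about reducing directly to $\int_0^{1/2}\rho^{-1}(\log\rho)^{-2}\,d\rho$ is precisely the cited Lemma~2.1 of \cite{LR2}). No gaps; the uniformity in $z\in K$ is indeed inherited from the volume estimate.
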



\begin{lem}[\cite{LR2}, Lemma 2.5]\label{lem:estimate3}
Let $X\subset \C^N$ be an analytic variety of pure dimension $n$, $D\subset\subset X$ relatively compact and $0 \leq \alpha,\beta <2n$.
Then there exists a constant $C>0$ such that the following holds:
\begin{eqnarray*}
\int_D \frac{dV_X(\zeta)}{|\zeta-z|^\alpha |\zeta-w|^\beta}
\leq C\left\{
\begin{array}{ll}
1 & \ ,\ \alpha+\beta<2n,\\
\log |z - w| &\ ,\ \alpha+\beta=2n,\\
|z - w|^{2n-\alpha-\beta} &\ ,\ \alpha+\beta>2n,
\end{array}\right.
\end{eqnarray*}
for all $z, w \in X$ with $z\neq w$.
\end{lem}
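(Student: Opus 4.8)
The plan is to run the classical ``product of two Riesz potentials'' argument, now carried out on the variety $X$, reducing everything to the radial estimate of Lemma~\ref{lem:estimate2}. First I would observe that it suffices to prove the inequality with $z,w$ ranging over a fixed compact neighborhood of $\overline{D}$: if one of $z,w$ lies outside such a neighborhood then the integrand is bounded on $D$ and the asserted bound holds trivially after enlarging $C$. So fix such a neighborhood, set $\delta:=|z-w|$, and fix $R>0$ with $D\subset B_R(z)$ for all $z$ under consideration (we may also assume $\delta\le R$). Then I would decompose the domain of integration as $D=D_1\cup D_2\cup D_3$ with
\[
D_1:=D\cap B_{\delta/2}(z),\qquad D_2:=D\cap B_{\delta/2}(w),\qquad D_3:=D\setminus(D_1\cup D_2),
\]
and estimate the three pieces separately.

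On $D_1$ the triangle inequality gives $|\zeta-w|\ge\delta/2$, so $|\zeta-w|^{-\beta}\lesssim\delta^{-\beta}$ and the integral over $D_1$ is $\lesssim\delta^{-\beta}\int_{X\cap B_{\delta/2}(z)}|\zeta-z|^{-\alpha}\,dV_X(\zeta)$. Since $\alpha<2n$, letting $r_1\to0^+$ in Lemma~\ref{lem:estimate2} (by monotone convergence) bounds this last integral by a constant times $\delta^{2n-\alpha}$, so the $D_1$-contribution is $\lesssim\delta^{2n-\alpha-\beta}$; by symmetry the same holds for $D_2$. On $D_3$ one has $|\zeta-z|\ge\delta/2$ and $|\zeta-w|\ge\delta/2$, hence $\delta\le2|\zeta-w|$ and $|\zeta-z|\le|\zeta-w|+\delta\le3|\zeta-w|$, so $|\zeta-z|$ and $|\zeta-w|$ are comparable there and the integrand is $\lesssim|\zeta-z|^{-(\alpha+\beta)}$. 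Since $D_3\subset X\cap\bigl(B_R(z)\setminus\overline{B_{\delta/2}(z)}\bigr)$, Lemma~\ref{lem:estimate2} applied with exponent $\alpha+\beta$, $r_1=\delta/2$ and $r_2=R$ gives that the $D_3$-contribution is $\lesssim R^{2n-\alpha-\beta}$ if $\alpha+\beta<2n$, $\lesssim1+|\log\delta|$ if $\alpha+\beta=2n$, and $\lesssim\delta^{2n-\alpha-\beta}$ if $\alpha+\beta>2n$.

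Adding the three contributions then yields the lemma: when $\alpha+\beta>2n$ all three are $\lesssim\delta^{2n-\alpha-\beta}$; when $\alpha+\beta=2n$ the $D_1$- and $D_2$-terms equal $\delta^{0}=1$ and are absorbed into the $D_3$-term $1+|\log\delta|$; and when $\alpha+\beta<2n$ the $D_3$-term is bounded while the $D_1$- and $D_2$-terms $\delta^{2n-\alpha-\beta}$ are bounded since $\delta\le R$. I do not expect a genuine obstacle here; the only points that need a little care are the uniformity of the constant in $z,w$ (which is why one first reduces to a compact set, where Lemma~\ref{lem:estimate2} applies), and the comparability of $|\zeta-z|$ and $|\zeta-w|$ on the ``far'' region $D_3$, which is exactly what makes the single-kernel estimate of Lemma~\ref{lem:estimate2} applicable there.
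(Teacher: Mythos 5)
Your argument is correct and is essentially the intended one: the paper does not prove this lemma itself (it is quoted from [LR2], with the remark that all the appendix estimates reduce to the radial estimate of Lemma~\ref{lem:estimate2}), and your three-region decomposition around $z$ and $w$ combined with that radial estimate is exactly the standard reduction. The only cosmetic point is that in your preliminary reduction the integrand need not be bounded when only one of $z,w$ leaves the compact neighborhood, but the single-kernel estimate (with $\alpha,\beta<2n$) closes that case immediately.
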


\begin{lem}[\cite{LR2}, Lemma 2.7]\label{lem:estimate8}
Let $X\subset \C^N$ be an analytic variety of pure dimension $n$, $K\subset X$ a
compact subset and $R>0$. Let $0 \leq \alpha < 2n$.
Then there exists a constant $C>0$ such that:
\begin{eqnarray*}
\int_{X \cap B_{r}(z)}
\frac{dV_X(\zeta)}{|\zeta-w|^\alpha} \leq C
r^{2n-\alpha}
\end{eqnarray*}
for all $z\in K$, $w \in X$ and $0\leq r \leq R$.
\end{lem}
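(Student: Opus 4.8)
The plan is to split the estimate according to the relative position of $w$ and the ball $B_r(z)$, and to reduce both cases to the basic volume growth $\mathrm{vol}(X\cap B_\rho(\zeta_0))\lesssim\rho^{2n}$ together with Lemma~\ref{lem:estimate2}. Here $\mathrm{vol}$ denotes the $2n$-dimensional volume with respect to $dV_X$; the growth estimate, uniform for $\zeta_0$ in a compact subset of $X$ and $\rho$ bounded, is the fact recalled above from \cite[Consequence~III.5.8]{De}, and it is also the case $\alpha=0$ of Lemma~\ref{lem:estimate2} after letting the inner radius tend to $0$. We may assume $r>0$, the case $r=0$ being trivial. Put $K':=X\cap\{\zeta\in\C^N:\dist(\zeta,K)\leq 3R\}$, a compact subset of $X$, and note that if $z\in K$, $0<r\leq R$ and $|z-w|\leq 2r$ then $w\in K'$ and $X\cap B_r(z)\subseteq X\cap B_{3r}(w)$, while if $|z-w|>2r$ then $|\zeta-w|\geq|z-w|-r>r$ for every $\zeta\in B_r(z)$.

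In the first case, $|z-w|\leq 2r$, I would estimate
\[
\int_{X\cap B_r(z)}\frac{dV_X(\zeta)}{|\zeta-w|^\alpha}\leq\int_{X\cap B_{3r}(w)}\frac{dV_X(\zeta)}{|\zeta-w|^\alpha}
\]
by applying Lemma~\ref{lem:estimate2} with the compact set $K'$ (which contains $w$), the radius bound $3R$, and the annuli $B_{3r}(w)\setminus\overline{B_\rho(w)}$: since $\alpha<2n$ the bound there is $C(3r)^{2n-\alpha}$, independent of the inner radius $\rho$, so letting $\rho\to 0^+$ and using monotone convergence (the integrand is nonnegative and $\{w\}$ is $dV_X$-null) gives $\int_{X\cap B_{3r}(w)}|\zeta-w|^{-\alpha}\,dV_X\leq C(3r)^{2n-\alpha}\lesssim r^{2n-\alpha}$. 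In the second case, $|z-w|>2r$, the integrand is dominated by $r^{-\alpha}$ on the domain of integration, so
\[
\int_{X\cap B_r(z)}\frac{dV_X(\zeta)}{|\zeta-w|^\alpha}\leq r^{-\alpha}\,\mathrm{vol}\big(X\cap B_r(z)\big)\leq C\,r^{2n-\alpha},
\]
using the volume growth with $\zeta_0=z\in K$ and $\rho=r\leq R$. Taking the maximum of the two constants produces a $C$ depending only on $X$, $K$, $R$ and $\alpha$, as required.

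The only point that needs a little care is the uniformity of $C$ in $w$: since $w$ ranges over all of $X$, Lemma~\ref{lem:estimate2} cannot be applied with $w$ as the center of the annuli unless $w$ is a priori confined to a fixed compact set. The case split handles precisely this — when $w$ is within distance $2r\leq 2R$ of the ball it automatically lies in the fixed compact enlargement $K'$ of $K$, and when it is farther away the integrand is simply dominated by $r^{-\alpha}$, so no annular estimate centered at a distant $w$ is ever invoked. Alternatively, one can argue directly from Lemma~2.1 of \cite{LR2}, which reduces radial integrals on $X$ to radial integrals on $\C^n$ where the estimate is the classical one; the bookkeeping is essentially the same.
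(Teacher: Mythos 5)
The paper does not actually prove this lemma; it is quoted from \cite{LR2} (Lemma~2.7), with the appendix only remarking that all of these estimates reduce to the volume growth $\mathrm{vol}(X\cap B_r(z))\lesssim r^{2n}$. Your argument is correct and is precisely the expected route: the dichotomy $|z-w|\le 2r$ versus $|z-w|>2r$ reduces the near case to Lemma~\ref{lem:estimate2} centered at $w$ (legitimate because $w$ then lies in the fixed compact enlargement $K'$) and the far case to the trivial bound $r^{-\alpha}\,\mathrm{vol}(X\cap B_r(z))$. The only point to be aware of is that $K'=X\cap\{\dist(\cdot,K)\le 3R\}$ is a compact subset of $X$ only under the natural reading that $X$ is closed in $\C^N$ (or after shrinking $R$ so that this neighborhood stays in a relatively compact part of $X$), which is the intended scope of the statement.
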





\medskip
\noindent{\bf Acknowledgments.}
This research was supported by the Deutsche Forschungsgemeinschaft (DFG, German Research Foundation),
grant RU 1474/2 within DFG's Emmy Noether Programme.
The first, second, and last author were partially supported by the Swedish Research Council.
We would like to thank the anonymous referee for valuable comments regarding the presentation of the article.

\end{document}